\newcommand{\C}{{\mathbb{C}}}
\newcommand{\R}{{\mathbb{R}}}
\newcommand{\Z}{{\mathbb{Z}}}
\newcommand{\N}{{\mathbb{N}}}
\newcommand{\K}{{\mathfrak{K}}}
\newcommand{\B}{{\mathfrak{B}}}
\newcommand{\cE}{{\mathcal{E}}}
\newcommand{\cfrak}{{\mathfrak{c}}}
\newcommand{\cbar}{{\overline{\mathfrak{c}}}}
\newcommand{\kfrak}{{\mathfrak{uc}}}
\newcommand{\kbar}{{\overline{\mathfrak{uc}}}}
\newcommand{\cF}{{\mathcal{F}}}
\newcommand{\cO}{{\mathcal{O}}}
\newcommand{\MA}{{\mathfrak{M}}}
\newcommand{\FA}{{C_r^*(M,\cF)}}
\newcommand{\FC}{{\cO(M,\cF)}}
\newcommand{\fol}{{(M,\cF)}}
\newcommand{\hatotimes}{\mathbin{\widehat\otimes}}
\DeclareMathOperator{\id}{id}
\DeclareMathOperator{\im}{im}
\DeclareMathOperator{\ind}{ind}
\DeclareMathOperator{\supp}{supp}
\DeclareMathOperator{\codim}{codim}
\newtheorem{thm}{Theorem}[section]
\newtheorem{lem}[thm]{Lemma}
\newtheorem{question}[thm]{Question}
\newtheorem{prop}[thm]{Proposition}
\newtheorem{cor}[thm]{Corollary}
\title[$K$-theory of leaf spaces]{Ring and module structures on  $K$-theory of leaf spaces and their application to longitudinal index theory}
\author{Christopher Wulff}
\begin{document}
\maketitle
\begin{abstract}
Pursuing conjectures of John Roe, we use the stable Higson corona of foliated cones to construct a new $K$-theory model for the leaf space of a foliation.
This new $K$-theory model is -- in contrast to Alain Connes' $K$-theory model -- a ring.
We show that Connes' $K$-theory model is a module over this ring and develop an interpretation of the module multiplication in terms of indices of twisted longitudinally elliptic operators.
\end{abstract}


\section{Introduction}
Let $\fol$ be a foliation on a compact manifold $M$. 
Roe suggested in \cite{RoeFoliations} to use the coarse geometry of the foliated cone $\FC$ (ibid.) to construct a new $K$-theory model $K_{FJ}^*(M/\mathcal{F})$ for the leaf space. 
The index $FJ$ stands for Farrell-Jones, as this construction was motivated by the foliated control theory of \cite{FarrellJones}.

However, it is virtually impossible to work with Roe's definition, because it involves using $K$-homology of the Roe-algebra. The Roe-algebra is non-separable and non-nuclear and therefore the usual tools for calculating $K$-homology fail, not to mention the fact that it is not even clear how to define $K$-homology in this case correctly.

The purpose of the present paper is to present a slight modification of Roe's $K$-theory model which is well behaved and for which all conjectures from \cite{RoeFoliations} are true. 
It is based on the idea of 
\cite{EmeMey} that one should use the $K$-theory of the \emph{stable Higson corona} $\cfrak(X)$ of a coarse space $X$ as a replacement for the $K$-homology of the Roe algebra.
With this in mind we propose the following alternative definition:
\begin{mdef*}[{\ref{def:FJModel}}\footnote{Only part of Definition \ref{def:FJModel} is displayed here in the introduction. For the full definition, see Section \ref{sec:newKtheory}.}]
The ``Farrell-Jones'' model for the $K$-theory of the leaf space of a foliation $\fol$ is
$K^{*}_{FJ}(M/\mathcal{F}):=K_{-*}(\cfrak(\FC))$. 
\end{mdef*}

Roe conjectured that the Farrell-Jones $K$-theory groups $K^{*}_{FJ}(M/\mathcal{F})$ are in fact rings. Using our modified definition, the ring structure is canonical and was introduced in \cite[Definition 6.1]{WulffCoassemblyRinghomo}.

One might expect  further basic properties of  $K$-theory of ``spaces'', and indeed the new model satisfies the following:
The  rings $K^*_{FJ}(M/\mathcal{F})$ are contravariantly functorial under smooth maps of leaf spaces (cf.\ Theorem \ref{thm:Ktheorymodelfunctor}) and if $\fol$ comes from a fibre bundle $M\to B$ with connected fibre,
then there is a canonical ring isomorphism
$K^*_{FJ}(M/\mathcal{F})\cong K^*(B)$ (Example \ref{ex:submersion}).

Usually, one considers the $K$-theory of Connes' foliation algebra, 
\[K_C^*(M/\mathcal{F}):=K_{-*}(\FA),\]
as the $K$-theory of the leaf space \cite[Sections 5,6]{ConSurvey}.
We use the reduced foliation algebra $\FA$, because our proofs of the main theorems don't work for the full foliation algebra $C^*(M,\cF)$. These groups are the right receptacles of indices of longitudinally elliptic operators (see \cite[Section 7]{ConSurvey} and Section \ref{sec:longindtheory}) and there is a wrong way functoriality $f_!:K_C^*(M_1/\cF_1)\to K_C^*(M_2/\cF_2)$ under $K$-oriented smooth maps of leaf spaces $f:M_1/\cF_1\to M_2/\cF_2$ \cite{HilSka}.

Roe asked in \cite{RoeFoliations} for the relation between Connes' $K$-theory model and the new Farrell-Jones $K$-theory model.
With our modified definition of the latter, Roe's conjectures work out fine:
\begin{cor*}[{\ref{cor:modulemultiplication}\footnote{Ditto: See Section \ref{sec:modulestructure} for the complete Corollary \ref{cor:modulemultiplication}.} (cf.\ \cite[Conjecture 0.2]{RoeFoliations})}]
 $K^*_C(M/\cF)$ is a module over $K_{FJ}^*(M/\cF)$.
\end{cor*}
\begin{cor*}[{\ref{cor:shriekmap} (cf.\ \cite[p.\ 204]{RoeFoliations})}]
Assume that $T\mathcal{F}$ is even dimensional and $\operatorname{spin}^c$ and let $\slashed{D}$ be the corresponding Dirac operator.
Then the map
\[p_!\circ p^*:K^*_{FJ}(M/\cF)\to K^*_C(M/\cF)\]
is module multiplication with $\ind(\slashed{D})\in K_C^0(M/\cF)$.
\end{cor*}
Here, $p:M\to M/\cF$ is the canonical smooth map of leaf spaces, its domain being $M/\cF_0$ for the trivial $0$-dimensional foliation $\cF_0$ on $M$ (cf.\ Example \ref{ex:projtoleafspace}).

Proving the same result for odd dimensional $\operatorname{spin}^c$ foliations would require the index theory of selfadjoint longitudinally elliptic operators and its relation to the module structure, which are not discussed in this paper.

The module structure can also be interpretated by indices of twisted longitudinally elliptic operators. If $D$ is a longitudinally elliptic operator on $\fol$ and $F\to M$ a smooth vector bundle, then the twisted operator $D_F$ again is longitudinally elliptic. 
There is no general method to calculate
$\ind(D_F)$ from $\ind(D)$ and $F$ alone, because there are cases where $\ind(D)$ vanishes but $\ind(D_F)$ does not.
It is possible, however, if the bundle $F$ is a bundle over the leaf space in an asymptotic sense, as defined in Definition  \ref{asymptoticallyleafwisebundles}. This condition ensures that $[F]\in K^0(M)$ is the pullback of an element $x_F\in K^0_{FJ}(M/\cF)$.
\begin{cor*}[\ref{cor:twistingbyasymptoticallyleafwisebundles}]
If $D$ is a longitudinally elliptic  operator, $F\to M$ a smooth vector bundle for which there is an element $x_F\in K^0_{FJ}(M/\mathcal{F})$ with $[F]=p^*(x_F)$, 
then the index of the twisted operator $D_F$ is
\[\ind(D_F)=x_F\cdot \ind(D)\in K^0_C(M/\mathcal{F}).\]
\end{cor*}
An illustrative special case is provided by  fibre bundles $p:M\to B$ with connected fibre:
Here, a longitudinally elliptic differential operator $D$ is a family of operators parametrized by $B$ and the pullback $F=p^*F'$ of a vector bundle $F'$ over $B$ is asymptotically a bundle over the leaf space.
Under the canonical isomorphism $K^*_C(M/\cF)\cong K^*(B)$ \cite[Section 5]{ConSurvey}, the indices  $\operatorname{ind}(D), \operatorname{ind}(D_F)$ correspond to the family indices of $D,D_F$, and under the isomorphism $K^*_{FJ}(M/\cF)\cong K^*(B)$ mentioned above, $x_F$ corresponds to $[F']\in K^0(B)$. 
In this case, the corollary specializes to the rather obvious statement 
\[\operatorname{ind}(D_F)=[F']\cdot\operatorname{ind}(D)\in K^0(B)\,.\]

Finally, it should be said that Section \ref{sec:ConesFunctorial} of this paper contains the proof of functoriality of the foliated cones construction, which is simultaneously a topic of autonomous interest:
\begin{thm*}[\ref{thm:conefunctoriality}]
The foliated cone construction is a functor from the category of foliations and smooth maps of leaf spaces between them to the category of metrizable coarse spaces and coarse equivalence classes of coarse Borel maps between them.
\end{thm*}
For the readers convenience we note that the full power of functoriality is not needed for understanding the main results of this paper. Only the special cases of Examples \ref{ex:projtoleafspace}, \ref{example:submersionsmoothmaps}, where the induced maps have a very simple description, are relevant later on. Thus, the impatient reader may simply skip the first part of  Section \ref{sec:ConesFunctorial}.

One last comment: in this paper we shall use the symbol $\otimes$ for the \emph{maximal} tensor product of $C^*$-algebras.

\begin{acknow*}
This paper arises from the author's doctoral thesis at the University of Augsburg.
The author would like to thank his thesis advisor Bernhard Hanke for his steady encouragement and advice. Furthermore, the author is grateful to Thomas Schick and the anonymous referee for helpful comments.
Part of this work was carried out during a research stay at the Max Planck Institute for Mathematics in Bonn, whose hospitality is gratefully acknowledged.
The doctoral project was supported by a grant of the Studienstiftung des Deutschen Volkes.
\end{acknow*}

\tableofcontents

\section{Basic definitions}\label{sec:basicdef}
We begin by recalling some basic notions which are relevant throughout this paper. These are: the foliated cone and the stable Higson corona, which are the ingredients of the new $K$-theory model; the holonomy groupoid, which appears in the definition of Connes $K$-theory model and in the definition of the notion of smooth maps between leaf spaces. 
Furthermore, we introduce the length function, which is the main tool for relating the holonomy groupoid to coarse geometry of foliated cones, thus providing the link between the two $K$-theory models.

\subsection{Foliated cones}\label{subsec:foliatedcones}
Let $(M,\cF)$ be a foliation of a compact manifold $M$. 
Choose any Riemannian metric $g_M$  on $M$ (the foliation does \emph{not} have to be a Riemannian foliation),
and let $g_N$ be its normal component with respect to the orthogonal decomposition $TM=T\cF\oplus N\cF$ of the tangent bundle into longitudinal and normal vectors.
\begin{mdef}[({cf.\ \cite{RoeFoliations}})]
The \emph{foliated cone} $\cO(M,\cF)$ over $\fol$ is the manifold $[0,\infty)\times M$ equipped with the Riemannian metric 
\[g:=dt^2+g_M+t^2g_N\]
which blows up only in the transverse direction as the coordinate $t\in[0,\infty)$ tends to infinity. 
For the  the trivial  $0$-dimensional foliation $\cF_0$ on $M$ we abbreviate  $\cO M:=\cO(M,\cF_0)$.
\end{mdef}

From a topological point of view one would have to crush $\{0\}\times M$ to a point to obtain a cone (which Roe did in his original definition), but as we are only interested in the coarse geometry, we may neglect this difference.
Furthermore, the coarse equivalence class of the foliated cone is also independent from the chosen metric $g_M$.
This is easily seen later on as a byproduct of our exposition.
Note also that $\cO M$ is coarsely equivalent to the usual Riemannian cone over $M$.

The idea behind the foliated cone is that from a coarse geometric perspective one sees the leaves diverging from each other as $t\to\infty$ while longitudinal distances stay bounded and thus become irrelevant.
Therefore, coronas of this coarse space may be thought of as models for the leaf space of a foliation.
In the present paper we implement this idea by using the \emph{stable Higson corona}:

\subsection{The stable Higson corona}
The stable Higson corona of a coarse space was introduced in \cite{EmeMey}. 
Its definition and the properties which are relevant to us are also summarized in \cite[Sections 1 \& 6]{WulffCoassemblyRinghomo}.
We briefly recall what is needed:
\begin{mdef}[({cf.\ \cite[Section 3]{EmeMey},\cite[Definition 1.4]{WulffCoassemblyRinghomo}})]
\label{coarsefunctionalgebras}
Let $(X,d_X)$ be a proper metric space, $(Y,d_Y)$ a metric space and $D$ any $C^*$-algebra.
\begin{enumerate}
\item A Borel map $f:X\to Y$
is said to have \emph{vanishing variation}, if for all $R>0$ the function
\[\operatorname{Var}_Rf:\,X\to [0,\infty),\quad x\mapsto\sup\{d_Y(f(x),f(y))\,|\,d_X(x,y)\leq R\}\]
vanishes at infinity. \cite[Definition 3.1]{EmeMey}
\item Let $\kbar(X;D)$ be the $C^*$-algebra of bounded, continuous functions of vanishing variation $X\to D$. It is called the \emph{unstable Higson compactification of $X$ with coefficients $D$}.
\item The \emph{unstable Higson corona of $X$ with coefficients $D$} is the quotient $C^*$-algebra $\kfrak(X;D):=\kbar(X;D)/C_0(X;D)$.
\item  Denote by $\K$ the $C^*$-algebra of compact operators on a fixed, infinite dimensional, separable Hilbert space $\ell^2$, e.\,g.\ $\ell^2(\N)$ or $\ell^2(\Z)$.
The \emph{stable} counterparts of the above function algebras are obtained by replacing $D$ with $D\otimes\K$:
\begin{align*}
\cbar(X;D)&:=\kbar(X;D\otimes\K),
\\\cfrak(X;D)&:=\kfrak(X;D\otimes\K).
\end{align*}
In particular, $\cfrak(X;D)$ is the \emph{stable Higson corona of $X$ with coefficients $D$}.  \cite[Definition 3.2]{EmeMey}
\item If $D=\C$, we usually omit $D$ from notation.
\end{enumerate}
\end{mdef}
\begin{prop}[({cf.\ \cite[Proposition 3.7]{EmeMey}, \cite[Proposition 1.5]{WulffCoassemblyRinghomo}})]\label{prop:coronasfunctorial}
The assignments $X\mapsto \kfrak(X,D),\, X\mapsto \cfrak(X,D),\, X\mapsto \cfrak(X),$ are contravariant functors from the category of proper metric spaces and coarse equivalence classes of coarse Borel maps to the category of $C^*$-algebras.

The assignments $X\mapsto \kbar(X,D),\, X\mapsto \cbar(X,D),\, X\mapsto \cbar(X),$ are contravariantly functorial with respect to continuous coarse maps.

Furthermore, there is the obvious covariant functoriality in the coefficient algebra.
\end{prop}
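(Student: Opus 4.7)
The plan is to build everything from the tautological pullback of functions. For a coarse Borel map $f\colon X\to Y$ between proper metric spaces and a $C^*$-algebra $E$, I would define
\[
f^*\colon \kbar(Y;E)\to \kbar(X;E),\quad g\mapsto g\circ f,
\]
applied once to $E=D$ (for $\kbar,\kfrak$) and once to $E=D\otimes\K$ (for $\cbar,\cfrak$), so the stable statements are formal consequences of the unstable ones. Boundedness of $g\circ f$ is obvious, and Borelness follows since the composition of a Borel map with a Borel function is Borel. The only nontrivial input is vanishing variation: since $f$ is coarse, for every $R>0$ there is $S=S(R)>0$ with $d_Y(f(x),f(x'))\le S$ whenever $d_X(x,x')\le R$, and hence
\[
\operatorname{Var}_R(g\circ f)(x)\;\le\;\bigl(\operatorname{Var}_S g\bigr)\bigl(f(x)\bigr).
\]
Because $f$ is metrically proper and $\operatorname{Var}_S g\in C_0(Y)$, the right-hand side lies in $C_0(X)$, so $g\circ f$ has vanishing variation. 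That $f^*$ is a $*$-homomorphism is immediate from pointwise operations. In the continuous-coarse setting this same recipe produces $f^*\colon\kbar(Y;D)\to\kbar(X;D)$ since a composition of continuous maps is continuous.

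Next I would check that $f^*$ descends to the corona. Metric properness of $f$ gives $g\circ f\in C_0(X;E)$ whenever $g\in C_0(Y;E)$, so the quotient map $\kfrak(Y;E)\to\kfrak(X;E)$ is well defined. To obtain a functor from the category of \emph{coarse equivalence classes}, I would verify that closeness of coarse maps produces the same induced map on the quotient: if $f_0,f_1\colon X\to Y$ satisfy $d_Y(f_0(x),f_1(x))\le R$ uniformly in $x$, then for $g$ of vanishing variation
\[
\bigl\|(g\circ f_0)(x)-(g\circ f_1)(x)\bigr\|\;\le\;(\operatorname{Var}_R g)\bigl(f_0(x)\bigr),
\]
which lies in $C_0(X)$ by metric properness of $f_0$. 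Hence $f_0^*-f_1^*$ factors through $C_0(X;E)$ and therefore vanishes after passing to $\kfrak(X;E)$.

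Strict functoriality on the level of $\kbar$ and $\cbar$ is now tautological, since $(f\circ h)^*=h^*\circ f^*$ and $\id^*=\id$ hold before passing to any quotient. Passing to the corona then yields contravariant functoriality of $\kfrak, \cfrak$ with respect to the stated arrows. Covariant functoriality in the coefficients is even simpler: a $*$-homomorphism $\varphi\colon D\to D'$ is contractive, so postcomposition with $\varphi$ preserves vanishing variation, boundedness, Borelness/continuity, and $C_0$, and the induced maps $\varphi_*\colon\kbar(X;D)\to\kbar(X;D')$, $\kfrak(X;D)\to\kfrak(X;D')$, together with their stable counterparts obtained from $\varphi\otimes\id_\K$, are manifestly functorial in $\varphi$ and commute with all previously constructed pullbacks.

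The only real obstacle is bookkeeping: one has to make sure the vanishing-variation estimate is used with the \emph{coarse} constant $S=S(R)$ rather than with $R$ itself, and that metric properness of $f$ (not merely its coarseness) is invoked wherever one wants to conclude that a function in $C_0(Y)$ pulls back into $C_0(X)$. Once those two facts are kept straight, the argument is entirely formal.
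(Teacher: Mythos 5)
Your construction and the variation estimate $\operatorname{Var}_R(g\circ f)\le(\operatorname{Var}_{S(R)}g)\circ f$ are exactly the standard route taken in the cited sources, and the closeness and coefficient arguments are fine. But there is one genuine gap in the first (and main) part of the statement: by Definition \ref{coarsefunctionalgebras}, $\kbar(X;E)$ consists of bounded \emph{continuous} functions of vanishing variation, and $C_0(X;E)$ likewise consists of continuous functions. When $f$ is merely a coarse \emph{Borel} map, $g\circ f$ is Borel but in general not continuous, so your $f^*$ does not actually take values in $\kbar(X;E)$, and the claim that $f_0^*-f_1^*$ ``factors through $C_0(X;E)$'' is not yet meaningful. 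You implicitly treat Borel measurability as the membership criterion for $\kbar$, which is not how the algebra is defined; this is precisely why the proposition grants functoriality at the level of $\kbar$ and $\cbar$ only for \emph{continuous} coarse maps, while the corona algebras $\kfrak$ and $\cfrak$ are functorial for the larger class of Borel maps.

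To close the gap you need one additional idea: either (a) replace the Borel function $g\circ f$ by a continuous function at distance vanishing at infinity --- for instance by averaging its values against a partition of unity subordinate to a uniformly bounded open cover of $X$, an argument of exactly the type carried out in Lemma \ref{lem:smoothapprox} of this paper; the vanishing-variation hypothesis guarantees that the averaged function differs from $g\circ f$ by a function vanishing at infinity, so the resulting class in $\kfrak(X;E)$ is independent of the choices; or (b) introduce the ``Borel'' versions of the compactification and corona and prove that the inclusion of the continuous model induces an isomorphism on coronas (which again needs the averaging argument). With either repair, the rest of your proof --- descent to the quotient via properness, invariance under closeness, strict functoriality, and covariance in $D$ --- goes through as written.
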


The main theme of \cite{WulffCoassemblyRinghomo} is the canonical multiplicative structure on the $K$-theory of the corona algebras:
By pointwise multiplication of functions we obtain a  $*$-homomorphism
\[\nabla:\,\kfrak(X;D)\otimes \kfrak(X;E)\to\kfrak(X; D\otimes  E)\,.\]
\begin{mdef}[({\cite[Definition 6.1]{WulffCoassemblyRinghomo}})]\label{def:coronaproduct}
The product
\[K_i(\kfrak({X};D))\otimes K_j(\kfrak({X};E))\to K_{i+j}(\kfrak({X};D\otimes E))\]
is the composition of the exterior product in $K$-theory with  $\nabla_*$. Replacing $D,E$ by $D\otimes \K,E\otimes \K$ or simply both by $\K$ we obtain the products
\begin{align*}
K_i(\cfrak({X};D))\otimes K_j(\cfrak({X};E))&\to K_{i+j}(\cfrak({X};D\otimes E))
\\K_i(\cfrak({X}))\otimes K_j(\cfrak({X}))&\to K_{i+j}(\cfrak({X}))
\end{align*}
for the stable Higson corona. We denote all of them simply by ``$\,\cdot\,$''.
\end{mdef}
These products are associative, graded commutative and independent of the choice of the identification $\K\otimes\K\cong \K$ which is hidden in the definition.
In particular, $K_*(\cfrak(X))$ is canonically a $\Z_2$-graded, graded commutative ring.

\subsection{The holonomy groupoid and its length function}
The \emph{holonomy groupoid} (also called the \emph{graph}) of a foliation $\fol$ was introduced in  \cite{Winkelnkemper}. In this section, we briefly review its definition and relate it to the coarse geometry of foliated cones via its length function.

As always, let $(M,\cF)$ be a foliation of a compact manifold $M$.
Let $c$ be a leafwise path (i.\,e.\ $c$ is piecewise smooth and $\dot c(t)\in T_{c(t)}\cF$ for all $t$) between $x_0,x_1\in M$.
Choose foliation charts 
\[\phi_i:\,U_i\xrightarrow{\approx} V_i\times W_i\subset \R^{\dim \cF}\times \R^{\codim\cF}\]
around $x_i,\,i=0,1$. By following paths which stay close to $c$, we obtain a well defined germ of local diffeomorphisms $W_0\to W_1$ at $\phi_0(x_0)$. This germ is called the holonomy of $c$ (with respect to the chosen coordinate charts).

Two leafwise paths $c_1,c_2$ between $x_0,x_1$ are said to have the same holo\-nomy if their holonomy with respect to some foliation charts around $x_0,x_1$ agree. This notion is independent of the choice of the charts.
\begin{mdef}[({\cite{Winkelnkemper}})]
The \emph{holonomy groupoid} (also called \emph{graph}) \mbox{$G\overset{s}{\underset{r}{\rightrightarrows}}M$} of $(M,\cF)$ is the smooth (possibly non-Hausdorff) groupoid of dimension $\dim M+\dim\cF$ consisting of holonomy classes of leafwise paths together with the obvious source and range maps.
\end{mdef}
The manifold structure on $G$ is obtained as follows:  For a leafwise path $c$, choose $\phi_0,\phi_1$ as above. We may assume that the chart domains are small enough such that $V_0,V_1,W_0,W_1$ are homeomorphic to open balls and $W_0\cong W_1$ is a diffeomorphism representing the holonomy of $c$. Then there is an obvious chart from a set of holonomy classes of leafwise paths which stay close to $c$ onto the  open subset $V_0\times W_0\times V_1\subset\R^{\dim M+\dim\cF}$.

For each $x\in M$, 
the sets $G_x:=s^{-1}(\{x\}),G^x:=r^{-1}(\{x\})$ are smooth Hausdorff submanifolds of $G$ of dimension $\dim\mathcal{F}$.
For arbitrary subsets $A,B\subset M$ we define
\[G_A:=s^{-1}(A),\quad G^B:=r^{-1}(B),\quad G_A^B:=s^{-1}(A)\cap r^{-1}(B).\]

The tool which relates the holonomy groupoid to coarse geometry is the length function: Assume that $M$ is equipped with a Riemannian metric $g_M$.
\begin{mdef}
The \emph{length function} of $G$ assigns to each holonomy class the infimum of the lengths of its representatives:
\[L:G\to [0,\infty),\quad \gamma\mapsto \inf_{c\in\gamma}L(c)\]
\end{mdef}
\begin{lem}
The length function is upper semi-continuous.
\end{lem}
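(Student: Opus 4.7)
To prove upper semi-continuity, fix $\gamma_0\in G$ with source $x_0$ and range $x_1$, and let $\varepsilon>0$. I want to exhibit a neighborhood of $\gamma_0$ in $G$ on which $L<L(\gamma_0)+\varepsilon$. Start by choosing a representative leafwise path $c_0\colon [0,1]\to M$ of $\gamma_0$ with $L(c_0)<L(\gamma_0)+\varepsilon/2$, which exists by the definition of the infimum. Then cover the compact image of $c_0$ by a finite chain of foliation charts $\phi_i\colon U_i\xrightarrow{\approx}V_i\times W_i$ ($i=0,\ldots,n$), with $\phi_0$ centred at $x_0$ and $\phi_n$ centred at $x_1$, and pick a subdivision $0=t_0<t_1<\cdots<t_{n+1}=1$ such that $c_0([t_i,t_{i+1}])\subset U_i$ and the junction point $c_0(t_{i+1})\in U_i\cap U_{i+1}$ for each $i$. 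We may arrange the $V_i,W_i$ to be homeomorphic to open balls so that the chart of $G$ near $\gamma_0$ described in the excerpt, $V_0\times W_0\times V_n$, is available.

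For each triple $(v_0,w,v_n)\in V_0\times W_0\times V_n$ sufficiently close to the coordinates representing $\gamma_0$, I will construct a concrete representative $c_\gamma$ of the corresponding holonomy class $\gamma$. Define $w_0:=w$ and recursively set $w_{i+1}:=h_i(w_i)$, where $h_i$ is the germ of the chart transition from $W_i$ to $W_{i+1}$ obtained by crossing the overlap at $c_0(t_{i+1})$. Inside each $U_i$, take the $i$th piece of $c_\gamma$ to be the plaquewise path at transverse level $w_i$ parallel to $c_0|_{[t_i,t_{i+1}]}$. Connect consecutive pieces at the points $\phi_i^{-1}(v_i',w_i)$ and $\phi_{i+1}^{-1}(v_{i+1}'',w_{i+1})$ by short leafwise segments inside $U_i\cap U_{i+1}$ (these endpoints lie in a common plaque once $w$ is close enough to the transverse coordinate of $x_0$), and attach short leafwise connectors at the ends from $\phi_0^{-1}(v_0,w_0)$ and to $\phi_n^{-1}(v_n,w_n)$. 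By construction $c_\gamma$ stays close to $c_0$ and hence represents the element of $G$ parametrised by $(v_0,w,v_n)$ in the chart from the excerpt.

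Since the Riemannian metric $g_M$ is continuous and the length of each plaquewise piece depends continuously on the transverse parameter, the total length of the $n$ translated pieces is within $\varepsilon/4$ of $L(c_0)$ when $w$ is close enough to the transverse coordinate of $x_0$ (using compactness of $[0,1]$ and finiteness of the chain). The endpoint and intermediate connectors can in addition be made of total length less than $\varepsilon/4$ by shrinking $V_0,V_n,W_0$ further. Combining these estimates,
\[L(\gamma)\leq L(c_\gamma)<L(c_0)+\tfrac{\varepsilon}{2}<L(\gamma_0)+\varepsilon\]
on a suitable neighborhood of $\gamma_0$, which is exactly upper semi-continuity.

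The main obstacle is the construction in the middle step: one has to verify that the concatenated, plaque-translated path $c_\gamma$ really lies in the holonomy class parametrised by $(v_0,w,v_n)$ in the chart of $G$ recalled in the excerpt. This is essentially an unwinding of the definition of the smooth structure on $G$, using that the holonomy of a chain of small plaque-transitions along a path close to $c_0$ agrees with the holonomy of $c_0$; once this is in place, the length estimate is a routine continuity-and-compactness argument. (Note that lower semi-continuity need not hold, since nearby holonomy classes may admit essentially shorter representatives.)
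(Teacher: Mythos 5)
Your proof is correct and follows essentially the same route as the paper: choose a representative of length close to the infimum, produce a continuously varying family of representative paths for all holonomy classes in a chart neighbourhood, and conclude by continuity of the length of that family. The paper declares the existence of such a family ``evident from the construction of the manifold structure on $G$'', whereas you carry out the plaque-translation and connector construction explicitly; the two arguments are otherwise identical.
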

\begin{proof}
Let $a>0$. Assume $\gamma\in L^{-1}([0,a))$ is represented by a piecewise smooth path $c$ of length $L(c)<a$. From the construction of the manifold structure on $G$ it is evident that there is an open neighbourhood $U$ of $\gamma$ in (some coordinate chart of) $G$ whose elements can be represented by a family of piecewise smooth paths $\{c_\rho\}_{\rho\in U}$ in such a way that $U\to [0,\infty),\,\rho\mapsto L(c_\rho)$ is continuous and $c_\gamma=c$. Thus, $U_0:=\{\rho\in U:\,L(c_\rho)<a\}$ is an open neighbourhood of $\gamma$. For any $\rho\in U_0$ we have $L(\rho)\leq L(c_\rho)<a$, so $U_0\subset L^{-1}([0,a))$.

This shows that $L^{-1}([0,a))\subset G$ is open and therefore $L:G\to [0,\infty)$ is upper semi-continuous.
\end{proof}
\begin{cor}\label{length:boundedoncompact}
The length function is bounded on every compact subset of $G$. 
\end{cor}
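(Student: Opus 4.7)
The corollary is the standard fact that an upper semi-continuous, finite-valued function on a compact space is bounded above; non-Hausdorffness of $G$ causes no trouble since only compactness of the subset is used.

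My plan is the following. Let $K \subset G$ be compact. By the preceding lemma, each set $L^{-1}([0,n))$ is open in $G$ for $n \in \N$, so the family $\{K \cap L^{-1}([0,n))\}_{n \in \N}$ is an open cover of $K$ in the subspace topology. Since $L$ takes values in $[0,\infty)$, every $\gamma \in K$ lies in some $L^{-1}([0,n))$, so this really is a cover. The sets are nested, so extracting a finite subcover by compactness of $K$ amounts to finding a single $N \in \N$ with $K \subset L^{-1}([0,N))$. Hence $L$ is bounded by $N$ on $K$.

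There is no real obstacle here; the only conceptual point is that compactness of $K$ is invoked without any Hausdorff assumption on $G$, but the cover-based argument is insensitive to this.
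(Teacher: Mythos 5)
Your argument is correct and is exactly the standard deduction the paper intends: the corollary is stated as an immediate consequence of the upper semi-continuity lemma, and your nested-cover argument via the open sets $L^{-1}([0,n))$ is the canonical way to make that explicit. Your observation that no Hausdorff hypothesis on $G$ is needed is also accurate.
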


The length function now gives us the following relation between the holonomy groupoid and the coarse geometry of the foliated cone:
\begin{lem}
Let $K\subset G$ be compact. Then the set
\[E_K:=\{(t,s(\gamma),t,r(\gamma))\in \FC\times \FC:\,t\in [0,\infty),\,\gamma\in K\}\]
is contained in  
\[E_R:=\{(x,y)\in \FC\times \FC:\,\operatorname{dist}(x,y)\leq R\}\]
for some $R>0$.
\end{lem}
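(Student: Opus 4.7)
The plan is to upper bound the distance $\operatorname{dist}_{\FC}((t,s(\gamma)),(t,r(\gamma)))$ by $L(\gamma)$ uniformly in $t$, and then invoke Corollary~\ref{length:boundedoncompact}.

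First, I would use Corollary~\ref{length:boundedoncompact} to set $R := \sup_{\gamma\in K} L(\gamma) + 1 < \infty$. Then, for each $\gamma\in K$ and each $\varepsilon>0$, I would choose a leafwise representative $c_\gamma^\varepsilon:[0,1]\to M$ with $L(c_\gamma^\varepsilon)\leq L(\gamma)+\varepsilon$.

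The key geometric observation is that for any fixed $t\in[0,\infty)$, the constant-$t$ lift $\tilde c(u):=(t,c_\gamma^\varepsilon(u))$ is a path in the foliated cone $\FC$. Its velocity is $\dot{\tilde c}(u)=(0,\dot c_\gamma^\varepsilon(u))$, and since $c_\gamma^\varepsilon$ is leafwise we have $\dot c_\gamma^\varepsilon(u)\in T\cF$, hence $g_N(\dot c_\gamma^\varepsilon,\dot c_\gamma^\varepsilon)=0$ by the orthogonal decomposition $TM=T\cF\oplus N\cF$. The length of $\tilde c$ in the foliated cone metric $g=dt^2+g_M+t^2 g_N$ is therefore
\[
\int_0^1\sqrt{g_M(\dot c_\gamma^\varepsilon,\dot c_\gamma^\varepsilon)+t^2 g_N(\dot c_\gamma^\varepsilon,\dot c_\gamma^\varepsilon)}\,du \;=\;\int_0^1\sqrt{g_M(\dot c_\gamma^\varepsilon,\dot c_\gamma^\varepsilon)}\,du\;=\;L(c_\gamma^\varepsilon),
\]
independent of $t$. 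This is the crucial point: the $t^2$-blow-up in the transverse direction does not affect leafwise paths.

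Consequently, for every $t\in[0,\infty)$ and every $\gamma\in K$,
\[
\operatorname{dist}_{\FC}((t,s(\gamma)),(t,r(\gamma)))\;\leq\;L(c_\gamma^\varepsilon)\;\leq\;L(\gamma)+\varepsilon.
\]
Letting $\varepsilon\to 0$ yields $\operatorname{dist}_{\FC}((t,s(\gamma)),(t,r(\gamma)))\leq L(\gamma)\leq R$, so $E_K\subset E_R$. There is no real obstacle here; the proof is just the observation that leafwise paths see only $g_M$ inside the cone metric, so bounding them comes down to the easy compactness/semi-continuity input already provided by Corollary~\ref{length:boundedoncompact}.
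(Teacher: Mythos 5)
Your proposal is correct and follows essentially the same route as the paper: choose $R$ slightly above the bound on $L|_K$ from Corollary~\ref{length:boundedoncompact}, lift a nearly length-minimizing leafwise representative to the slice $\{t\}\times M$, and observe that the cone metric does not lengthen leafwise paths since $g_N$ vanishes on $T\cF$. Your write-up merely makes explicit the length computation that the paper summarizes as ``the metric blows up only in transverse direction.''
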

\begin{proof}
Let $R$ be a little bit bigger then the upper bound for $L|_K$ given by Corollary \ref{length:boundedoncompact}.
The points $(t,s(\gamma))$ and $(t,r(\gamma))$ in $\FC$ are connected by the path $c_t:\,s\mapsto(t,c(s))$, where $c$ is a representative of $\gamma$ of length $L(c)<R$. As the metric on $\FC$ blows up only in transverse direction, we have 
\[d\big((t,s(\gamma)),\,(t,r(\gamma))\big)\leq L(c_t)= L(c)<R\]
and the claim follows.
\end{proof}
\begin{cor}\label{commutationlimit}
Let $g\in\kbar(\FC,D)$ and denote its restriction to $\{t\}\times M\subset\FC$ by $g_t\in C(M)\otimes D$. Then the norm of $s^*g_t-r^*g_t\in C(G,D)$ restricted to any compact set $K$ tends to zero as $t$ goes to infinity.
\end{cor}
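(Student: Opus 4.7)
The plan is to combine the geometric distance bound from the immediately preceding lemma with the vanishing variation condition on $g$. By that lemma, there exists $R > 0$ such that for every $\gamma \in K$ and every $t \in [0,\infty)$ the points $(t, s(\gamma))$ and $(t, r(\gamma))$ in $\FC$ are at distance at most $R$. This transports the question from $G$ to pairs of nearby points in the cone.

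Since $g \in \kbar(\FC, D)$ has vanishing variation by definition, the function $\operatorname{Var}_R g : \FC \to [0,\infty)$ lies in $C_0(\FC)$. The key geometric observation I would use is that because $M$ is compact and the cone metric dominates $dt^2$, the slice $\{t\} \times M$ leaves every compact subset of $\FC$ as $t \to \infty$. Consequently, for any $\varepsilon > 0$ there exists $T$ such that $\operatorname{Var}_R g(t, m) < \varepsilon$ for all $t \geq T$ and all $m \in M$ simultaneously.

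Combining these two ingredients, for any $\gamma \in K$ and $t \geq T$ one obtains
\[
\|(s^*g_t - r^*g_t)(\gamma)\|_D = \|g(t, s(\gamma)) - g(t, r(\gamma))\|_D \leq \operatorname{Var}_R g\bigl(t, s(\gamma)\bigr) < \varepsilon,
\]
so $\sup_{\gamma \in K} \|(s^*g_t - r^*g_t)(\gamma)\|_D < \varepsilon$ for $t \geq T$, which is precisely the claim.

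I do not anticipate a real obstacle: the corollary is essentially a direct translation of the geometric distance estimate into the analytic vanishing-variation estimate. The only mild subtlety worth pointing out carefully is the uniformity in $m \in M$ of the statement that $(t,m)$ leaves any prescribed compact set, which follows from compactness of $M$ and the inequality $g \geq dt^2$ on the cone.
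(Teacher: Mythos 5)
Your proof is correct and follows exactly the route the paper intends: the paper's own proof is the one-line remark that the corollary "follows directly from the previous Lemma and vanishing variation of $g$", and your write-up simply spells out that combination, including the (correct) observation that the slices $\{t\}\times M$ leave every compact subset of $\FC$ uniformly in $m$ because the cone metric dominates $dt^2$ and $M$ is compact.
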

\begin{proof}
This follows directly from the previous Lemma and vanishing variation of $g$.
\end{proof}

\section{Functoriality of the foliated cone construction}\label{sec:ConesFunctorial}
This section explains the functoriality of the foliated cone construction. As mentioned in the introduction, only the examples at the end of this section are necessary for understanding the rest of this paper.

Several equivalent definitions of the notion of smooth maps between leaf spaces are given in {\cite[Section I]{HilSka}}.
In this paper, we prefer to use the following hands-on definition:
\begin{mdef}[({\cite[Section I]{HilSka}})]\label{def:smoothmap}
Let $(M_1,\cF_1),(M_2,\cF_2)$ be two foliations and denote their graphs by $G_1,G_2$. A \emph{smooth map $f:\,M_1/\cF_1\to M_2/\cF_2$ between the leaf spaces}
is given by
\begin{itemize}
\item an open cover $\{\Omega_\alpha\}_{\alpha\in I}$ of $M_1$ and
\item a collection of smooth maps $f_{\alpha\beta}:(G_1)^{\Omega_\alpha}_{\Omega_\beta}\to G_2$
\end{itemize}
such that
\[\forall\gamma\in (G_1)^{\Omega_\alpha}_{\Omega_\beta}:\,f_{\beta\alpha}(\gamma^{-1})=f_{\alpha\beta}(\gamma)^{-1}\]
and for all $\gamma_1\in (G_1)^{\Omega_{\alpha_1}}_{\Omega_{\alpha_2}},\gamma_2\in (G_1)^{\Omega_{\alpha_2}}_{\Omega_{\alpha_3}}$ with $s(\gamma_1)=r(\gamma_2)$ we have 
\[s(f_{\alpha_1\alpha_2}(\gamma_1))=r(f_{\alpha_2\alpha_3}(\gamma_2))\] and
\[f_{\alpha_1\alpha_2}(\gamma_1)f_{\alpha_2\alpha_3}(\gamma_2)=f_{\alpha_1\alpha_3}(\gamma_1\gamma_2).\]
\end{mdef}
Note that the open cover $\{\Omega_\alpha\}_{\alpha\in I}$ is a somewhat unwelcome datum in the definition, as the notion of a smooth map between leaf spaces should not depend on its choice. 
For this reason, we shall identify smooth maps under the following equivalence relation:
\begin{mdef}\label{def:equivalenceofsmoothmaps}
Two smooth maps $f,f':\,M_1/\cF_1\to M_2/\cF_2$ between leaf spaces which are given by the sets of data 
\[\left(\{\Omega_\alpha\}_{\alpha\in I},\{f_{\alpha\beta}\}_{(\alpha,\beta)\in I\times I}\right),\quad\left(\{\Omega'_\alpha\}_{\alpha\in I'},\{f'_{\alpha\beta}\}_{(\alpha,\beta)\in I'\times I'}\right),\]
respectively,  
are defined to be equivalent iff they are obtained by restriction of index sets from another smooth map 
$\tilde f:\,M_1/\cF_1\to M_2/\cF_2$ 
which is of the form
\[\left(\{\Omega_\alpha\}_{\alpha\in I}\dot\cup\{\Omega'_\alpha\}_{\alpha\in I'},\{\tilde f_{\alpha\beta}\}_{(\alpha,\beta)\in(I\dot\cup I')\times(I\dot\cup I')} \right).\]
\end{mdef}

Given a smooth map $f:\,M_1/\cF_1\to M_2/\cF_2$ between leaf spaces as in Definition \ref{def:smoothmap}, the unit map $u:M_1\to G_1$ restricts to  $u|_\alpha:\,\Omega_\alpha\to (G_1)_{\Omega_\alpha}^{\Omega_\alpha}$. The second property shows that the image of the composition $f_{\alpha\alpha}\circ u|_\alpha$ lies in the unit space $M_2$ of $G_2$. We thus obtain a family of smooth maps
\[f_\alpha:=f_{\alpha\alpha}\circ u|_\alpha:\,\Omega_{\alpha}\to M_2.\]
The second property in the definition also implies 
\begin{equation}\label{eq:smoothleafmaps}
\forall\gamma\in (G_1)^{\Omega_\alpha}_{\Omega_\beta}:\quad r(f_{\alpha\beta}(\gamma))=f_\alpha(r(\gamma)),\quad s(f_{\alpha\beta}(\gamma))=f_\beta(s(\gamma)).
\end{equation}
In particular, any representative of $f_{\alpha\beta}(u(x))\in G_2$ for $x\in \Omega_\alpha\cap\Omega_\beta$ is a leafwise path in $M_2$ between $f_\alpha(x)$ and $f_\beta(x)$.

Furthermore, \eqref{eq:smoothleafmaps} implies that if $c$ is a path in $\Omega_{\alpha}$ which is contained in a single leaf then $f_{\alpha}\circ c$ is also contained in a single leaf.  Thus, $df_\alpha$ maps $T_x\cF_1$ to $T_{f_\alpha(x)}\cF_2$.

Finally, composition of two smooth maps $f:\,M_1/\cF_1\to M_2/\cF_2$, $g:\,M_2/\cF_2\to M_3/\cF_3$ works, of course, as follows. Let 
\[\left(\{\Omega_\alpha\}_{\alpha\in I},\{f_{\alpha\beta}\}_{(\alpha,\beta)\in I\times I}\right),\quad\left(\{\Theta_\xi\}_{\xi\in J},\{g_{\xi\zeta}\}_{(\xi,\zeta)\in J\times J}\right)\]
be the sets of data defining $f,g$, respectively. Then $g\circ f$ is defined by the open cover of $G_1$ consisting of the sets $\Phi_{(\alpha,\xi)}:=f_\alpha^{-1}(\Theta_\xi)$ with ${(\alpha,\xi)\in I\times J}$ and the compositions
$g_{\xi\zeta}\circ f_{\alpha\beta}$ which are defined on the sets $(G_1)^{\Phi_{(\alpha,\xi)}}_{\Phi_{(\beta,\zeta)}}$.

The main result of this section is the following functoriality theorem.
\begin{thm}\label{thm:conefunctoriality}
The foliated cone construction is a functor from the category of foliations and smooth maps of leaf spaces between them to the category of metrizable coarse spaces and coarse equivalence classes of coarse Borel maps between them.
\end{thm}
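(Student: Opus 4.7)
The plan is to construct, for a smooth map $f:M_1/\cF_1\to M_2/\cF_2$ given by data $(\{\Omega_\alpha\}_{\alpha\in I},\{f_{\alpha\beta}\})$, a coarse Borel map $\cO f:\cO(M_1,\cF_1)\to\cO(M_2,\cF_2)$ of the form $(t,x)\mapsto(t,\phi(x))$, where $\phi:M_1\to M_2$ is assembled from the local pieces $f_\alpha$. By compactness of $M_1$ I reduce to finite $I$, choose an open shrinking $\{U'_\alpha\}_{\alpha\in I}$ with $\overline{U'_\alpha}\subset\Omega_\alpha$, and then a Borel partition $\{U_\alpha\}$ with $U_\alpha\subset U'_\alpha$; setting $\phi(x):=f_\alpha(x)$ for the unique $\alpha$ with $x\in U_\alpha$ yields a Borel map. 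The resulting $\cO f$ is proper because bounded subsets of $\cO(M_2,\cF_2)$ lie in some $[0,T]\times M_2$ (the $t$-coordinate being $1$-Lipschitz for the cone metric), and their preimages therefore lie in $[0,T]\times M_1$.

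The main obstacle is to verify that $\cO f$ is bornologous. For a pair $((t,x),(t',x'))$ in an entourage of diameter $\leq R$, the cone metric enforces $|t-t'|\leq R$ together with bounds on the longitudinal and $t$-weighted transverse parts of any connecting path; in particular, for large $t$ the Riemannian distance $d_{M_1}(x,x')$ is as small as we like, and by a Lebesgue-number argument for $\{U'_\alpha\}$ we may arrange $x\in U_\alpha$, $x'\in U_\beta$ with both points in $\Omega_\alpha\cap\Omega_\beta$. The triangle inequality then reduces the estimate of $d_{\cO(M_2,\cF_2)}(\cO f(t,x),\cO f(t',x'))$ to two pieces: (i) a bound on $d((t,f_\alpha(x)),(t',f_\alpha(x')))$, handled by the key observation that $df_\alpha(T\cF_1)\subset T\cF_2$ forces $f_\alpha$ in adapted foliation coordinates to have the block form $(v,w)\mapsto(F_1(v,w),F_2(w))$, so that compactness of $M_1$ yields uniform Lipschitz constants for both the longitudinal and transverse components; and (ii) a bound on $d((t',f_\alpha(x')),(t',f_\beta(x')))$, obtained because $f_{\alpha\beta}(u(x'))\in G_2$ is a holonomy class of leafwise paths from $f_\beta(x')$ to $f_\alpha(x')$, whose length is uniformly bounded by Corollary \ref{length:boundedoncompact} applied to the compact image of $\overline{U'_\alpha}\cap\overline{U'_\beta}$ under $f_{\alpha\beta}\circ u$. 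Leafwise paths of bounded length at fixed level $t'$ contribute a bounded amount to the cone distance since the cone metric blows up only transversely.

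Independence of choices and functoriality then follow by variations on this theme. Two different Borel partitions, or two equivalent data sets in the sense of Definition \ref{def:equivalenceofsmoothmaps}, produce maps whose pointwise values differ only by holonomy elements $f_{\alpha\beta}(u(x))\in G_2$ of uniformly bounded length, and the leafwise estimate from (ii) shows they are close by a fixed constant, hence coarsely equivalent. The identity smooth map is realized by the trivial cover $\{M\}$ and $\id_G$, so $\cO(\id)=\id$ on the nose. For a composition with $g:M_2/\cF_2\to M_3/\cF_3$, the data defining $g\circ f$ uses the cover $\{f_\alpha^{-1}(\Theta_\xi)\}$ and the maps $g_\xi\circ f_\alpha$, so the pointwise composition $\cO g\circ\cO f$ and the directly defined $\cO(g\circ f)$ again differ only in the index chosen at each point, and are coarsely equivalent by the same leafwise-length argument.
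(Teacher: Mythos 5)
Your construction of the induced map, the properness argument, and the two core estimates (the uniform Lipschitz bound for each $f_\alpha$ coming from $df_\alpha(T\cF_1)\subset T\cF_2$, and the uniformly bounded leafwise ``jump'' between $f_\alpha(x)$ and $f_\beta(x)$ via Corollary \ref{length:boundedoncompact}) all match the paper's proof; so do your arguments for independence of choices and for functoriality. But there is a genuine gap in the bornologous step. You claim that for a pair $((t,x),(t',x'))$ at cone distance $\leq R$, ``for large $t$ the Riemannian distance $d_{M_1}(x,x')$ is as small as we like,'' and you use this to place $x$ and $x'$ in a common $\Omega_\alpha\cap\Omega_\beta$. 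This is false: the cone metric $dt^2+g_M+t^2g_N$ blows up only transversely, so a connecting path of length $\leq R$ forces the \emph{transverse} displacement to shrink as $t\to\infty$, but its longitudinal component can have length up to $R$ at every $t$. Two points on the same leaf at leafwise distance close to $R$ remain at $M_1$-distance close to $R$ for all $t$, and for $R$ larger than the Lebesgue number of your cover they need not lie in any common chart, so your triangle-inequality decomposition into the single pair (i)--(ii) does not apply.

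The missing ingredient is a uniform bound on the number of chart changes: since the cone is a length space, a pair at distance $<R$ is joined by a path whose $M_1$-projection has $g_M$-length $<R$, and a Lebesgue-number argument shows this path can be cut into at most $k=k(R)$ subpaths each contained in a single chart (this is the paper's Lemma \ref{lemma:jumpnumber}). One then estimates the image by alternating $k$ pushed-forward path pieces, controlled by your estimate (i) (the paper's Lemma \ref{lemma:pathpiecelength}), with $k+1$ leafwise jumps of length $\leq L$, controlled by your estimate (ii) (the paper's Lemma \ref{lemma:jumplength}), giving $S=(k+1)L+KR$. Equivalently, you could first verify the expansion condition only at a single small scale $\varepsilon$ below the Lebesgue number---where your single-chart argument is valid---and then invoke the chain argument for length spaces; either way, the reduction to a bounded number of charts must be made explicit, and it cannot be replaced by the large-$t$ claim as stated.
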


\begin{proof}
Assume we are given two compact foliations $(M_1,\cF_1),(M_2,\cF_2)$ 
and a smooth map $f:M_1/\cF_1\to M_2/\cF_2$ between the leaf spaces consisting of the data specified in Definition \ref{def:smoothmap}. 
The first step of the proof is to define the induced map $f_*:\cO(M_1,\cF_1)\to\cO(M_2,\cF_2)$ between the foliated cones. 

Before we start making some choices, let us fix  Riemannian metrics $g_1,g_2$ on $M_1,M_2$ which we shall use for the construction of the foliated cones $\mathcal{O}(M_1,\cF_1)$, $\mathcal{O}(M_2,\cF_2)$, respectively.

Because of compactness of $M_1$, we can find a finite open cover $(\Omega'_i)_{i=1,\dots,m}$ such that the closure of each $\Omega'_i$ is compact and contained in some $\Omega_{\alpha(i)}$. Denote the restriction of $f_{\alpha(i)}$ to $\overline{\Omega'_i}$ by $f_i$ and the restriction of $f_{\alpha(i)\alpha(j)}$ to $(G_1)^{\overline{\Omega'_i}}_{\overline{\Omega'_j}}$ by $f_{ij}$.
Furthermore, choose a Borel map $i:M_1\to \{1,\dots,m\},x\mapsto i_x$ such that $\forall x\in M_1: x\in \Omega'_{i_x}$.
\begin{prop}\label{prop:inducedmap}
With notations as above, the map 
\[f_*:\,\mathcal{O}(M_1,\cF_1)\to\mathcal{O}(M_2,\cF_2),\,(t,x)\mapsto (t,f_{i_x}(x))\]
is a coarse Borel map. The coarse equivalence class of this map is independent of the choices made. Furthermore, the coarse equivalence class only depends on the equivalence class of the smooth map $f$.
\end{prop}
\begin{proof}
The following observation is central to the proof.
\begin{lem}\label{lemma:jumplength}
There is a constant $L$ such that for all $1\leq i,j\leq m$ and $x\in\Omega'_i\cap\Omega'_j$ the points $f_i(x),f_j(x)$ are joined by a leafwise path of length at most $L$. In particular, the points 
\[(t,f_i(x)),(t,f_j(x))\in\mathcal{O}(M_2,\cF_2)\]
have distance at most $L$ for all $t\geq 0$.
\end{lem}
\begin{proof}
The length function on the holonomy groupoid $G_2$ is bounded on the compact subset
\[\bigcup_{1\leq i,j\leq m}f_{ij}(u(\overline{\Omega'_i}\cap\overline{\Omega'_j}))\]
by Corollary \ref{length:boundedoncompact}. As we observed above, the points $f_i(x),f_j(x)$ are joined by any representative of some element in this compact set.
\end{proof}
Because of this Lemma, the choice of the Borel map $i:M_1\to \{1,\dots,m\}$ affects the map $f_*$ only by a distance of at most $L$ and therefore the coarse equivalence class remains unchanged. 
Independence of the finite and compact refinement $(\Omega'_i)_{i=1,\dots,m'}$ is clear, as this family may be enlarged by a finite number of open sets while leaving the Borel map $i$ and therefore also $f_*$ unchanged.
For exactly the same reason, equivalent smooth maps between the leaf spaces also induce coarsely equivalent maps between the foliated cones.

It remains to show that  $f_*$ is a coarse map. Properness is clear, but the expansion condition has to be verified.
We start with a little calculation.
\begin{lem}\label{lemma:pathpiecelength}
There is a constant $K$ such that whenever $i\in\{1,\dots,m\}$ and $c:[0,1]\to [0,\infty)\times \Omega'_i$ is  a smooth path then the length estimate
\[L((id\times f_i)\circ c)\leq K\cdot L(c)\]
holds. Here, the length of the curve $c$ is measured by equipping $[0,\infty)\times \Omega'_i$ with the restricted metric as a subset of $\cO(M_1,\cF_1)$, and the length of $(id\times f_i)\circ c$ is, of course, measured in $\cO(M_2,\cF_2)$.
\end{lem}
\begin{proof}
Let \[K':=\max_{i=1,\dots,m}\sup_{x\in\overline{\Omega'_i}}\|df_i(x)\|\,.\]
Consider a tangent vector $\xi\in T_{(t,x)}\mathcal{O}(M_1,\cF_1)$ at a point $(t,x)\in[0,\infty)\times\Omega'_i$. We write
$\xi=\lambda\frac{\partial}{\partial t}+\xi_{\|}+\xi_\perp$
according to the orthogonal decomposition 
\[T_{(t,x)}\mathcal{O}(M_1,\cF_1)=\R\frac{\partial}{\partial t}\oplus T_x\cF_1\oplus N_x\cF_1.\]
Its norm is given by 
\[\|\xi\|^2=\lambda^2+\|\xi\|^2+t^2\|\xi_\perp\|^2=\lambda^2+\|\xi_\|\|^2+(1+t^2)\|\xi_\perp\|^2.\]
Now recall that $df_i(\xi_\|)$ is tangent to $\cF_2$. Thus, we can calculate
\begin{align*}
\|df_i(\xi)\|^2&=\|\lambda\frac{\partial}{\partial t}+df_i(\xi_{\|})+df_i(\xi_\perp)\|^2
\\&=\lambda^2+\|df_i(\xi_{\|})+df_i(\xi_\perp)_\|\|^2+(1+t^2)\|df_i(\xi_\perp)_\perp\|^2
\\&\leq\lambda^2+(\|df_i(\xi_{\|})\|+\|df_i(\xi_\perp)\|)^2+(1+t^2)\|df_i(\xi_\perp)\|^2
\\&\leq \lambda^2+K'^2(\|\xi_{\|}\|+\|\xi_\perp\|)^2+K'^2(1+t^2)\|\xi_\perp\|^2
\\&\leq \lambda^2+2K'^2\|\xi_{\|}\|^2+K'^2(3+t^2)\|\xi_\perp\|^2
\end{align*}
and now we see that $\|df_i(\xi)\|\leq K\|\xi\|$ for
\[K=\sqrt{\max(1,3K'^2)}.\]
The claim follows.
\end{proof}
Let $R>0$. We have to find $S>0$ such that whenever $z,z'\in \mathcal{O}(M_1,\cF_1)$ are points of distance less than $R$ then the distance between $f_*(z),f_*(z')$ is less than $S$. We can estimate the distance between $f_*(z),f_*(z')$ by a sequence of paths as in Lemma \ref{lemma:pathpiecelength} and jumps between points $(t,f_i(x)),(t,f_j(x))$ of length $\leq L$ as in Lemma \ref{lemma:jumplength}.
The only thing we need more is an upper bound for the number of jumps needed.
\begin{lem}\label{lemma:jumpnumber}
Let $R>0$. There is $k\in \N$ such that whenever $c:[0,1]\to M_1$ is a path of length $\leq R$, then there are $0= s_0\leq\dots\leq s_k= 1$ such that for each $l=0,\dots,k-1$ the image of  $c|_{[s_l,s_{l+1}]}$ is contained in some $\Omega'_{i(l)}$.
\end{lem}
\begin{proof}
Let $\varepsilon>0$ be such that every $\varepsilon$-ball in $M_1$ is contained in some $\Omega'_i$. Then the claim follows easily for some fixed $k>R/2\varepsilon$.
\end{proof}
Now if $z,z'\in \mathcal{O}(M_1,\cF_1)$ are less than distance $R$ apart, then they are joined by a path 
\[(t,c):\,[0,1]\to\mathcal{O}(M_1,\cF_1)\]
of length less than $R$. It follows that $c:[0,1]\to M_1$ has length less than $R$ (measured in $g_1$), too, and we can apply  Lemma \ref{lemma:jumpnumber}. 
With $s_l,i(l)$ as in the lemma, we can now go from 
\[f_*(z)=(t(s_0),f_{i_{c(s_0)}}(c(s_0)))\quad\text{ to }\quad f_*(z')=(t(s_k),f_{i_{c(s_k)}}(c(s_k)))\]
by first jumping to the point $(t(s_0),f_{i(0)}(c(s_0)))$, then following the path 
\[(t|_{[s_0,s_1]},\,f_{i(0)}\circ c|_{[s_0,s_1]})\]
to the point $(t(s_1),f_{i(0)}(c(s_1)))$, then jumping again to $(t(s_1),f_{i(1)}(c(s_1)))$ and so on. We reach the endpoint after $k+1$ jumps of length at most $L$ and $k$ smooth paths in between, whose total length is at most $K\cdot R$.

Thus, $S:=(k+1)L+KR$ has the desired properties. This finishes the proof of Proposition \ref{prop:inducedmap}.
\end{proof}

An easy corollary, which one could also prove directly, is the following.
\begin{cor}
The coarse structure of the foliated cone $\cO(M,\cF)$ is independent of the chosen metric on $M$.
\end{cor}
\begin{proof}
Applying the Proposition to the smooth map of leaf spaces $M/\cF\to M/\cF$ given by the one element open cover $\{M\}$ of $M$ and the map $\id:G=G_M^M\to G$ shows that the identity between the cones on the left and right hand side, which are allowed to be constructed with different metrics on $M$, is always a coarse map.
\end{proof}

With induced maps being defined as in Proposition \ref{prop:inducedmap},
functoriality is in fact obvious, provided one makes the canonical choices in the construction of $(g\circ f)_*$.
This finishes the proof of  Theorem \ref{thm:conefunctoriality}.
\end{proof}

We conclude this section with easy but fundamental examples of such induced coarse maps. 

\begin{example}\label{ex:smoothmaps}
There is an obvious functor from the category of smooth compact manifolds to the category of compact foliations and smooth maps of leaf spaces: A manifold $M$ is mapped to the trivial $0$-dimensional foliation $(M,\cF_0^M)$ on $M$.
A smooth map $f:M\to N$ induces the smooth map of leaf spaces $f:M/\cF^M_0\to N/\cF^N_0$ given by the following data: The graphs of $(M,\cF^M_0)$, $(N,\cF^N_0)$ are $M,N$ themselves and $f:M/\cF^M_0\to N/\cF^N_0$ is given by the one element cover ${M}$ and the smooth function $f:M\to N$.
The induced map $\cO M\to \cO N$ is the obvious one: $\id_{[0,\infty)}\times f$.
\end{example}

\begin{example}\label{ex:projtoleafspace}
Given a compact foliation $(M,\cF)$, there is a canonical smooth map of leaf spaces
\[p:M\to M/\cF,\]
the left hand side being $M/\cF_0$ for the trivial $0$-dimensional foliation $\cF_0$ on $M$.
The graph of $(M,\cF_0)$ is simply $M$ while we denote the graph of $(M,\cF)$ by $G$.
The map $p$ consists of the one element open cover $\{M\}$ of $M$ and the unit map  $M=M_M^M\to G$.

The induced coarse map $\mathcal{O}M\to \cO(M,\cF)$ is simply the identity on the underlying topological space $[0,\infty)\times M$. It is obviously $1$-Lipschitz. 
\end{example}

\begin{example}\label{example:submersionsmoothmaps}
Assume that the foliation $(M,\cF)$ comes from a 
submersion $p:M\to B$. 
This submersion factors through a smooth map of leaf spaces $\tilde p:M/\cF\to B$ which consists of the one element covering $\{M\}$ of $M$ and the map
\[p\circ s=p\circ r:\,G=G_M^M\to B\]
where $G$
 is the holonomy groupoid of $(M,\cF)$ and $B$ is the holonomy groupoid of the trivial $0$-dimensional foliation on $B$.
The induced coarse map we obtain is simply
\[\tilde p_*=\id_{[0,\infty)}\times p:\,\cO(M,\cF)\to\mathcal{O}B.\]

Now assume that $p$ is surjective and all  fibers of $p$ are connected. In this case the fibers are uniformly bounded when measured by the length of smooth leafwise paths. As leafwise measured distances do not blow up in the foliated cone, we see that $\tilde p_*$ is a coarse equivalence.

This coarse equivalence is not surprising, because $\tilde p$ already is an isomorphism in the category of foliations and equivalence classes of smooth maps between them.
The inverse $f:B\to M/\cF$ of $\tilde p$ is defined as follows. Let $\{\Omega_\alpha\}$ be an open cover of $B$ such that for each $\alpha$ there is a smooth section $s_\alpha:\Omega_\alpha\to p^{-1}(\Omega_\alpha)$ of $p$ over $\Omega_\alpha$. 
Noting that the holonomy groupoid of $(M,\cF)$ in this special case is 
\[G=M\times_B M=\{(x,y)\in M\times M\,|\,p(x)=p(y)\},\] we can now define $f$ by the cover $\{\Omega_\alpha\}$ and the maps 
\[f_{\alpha\beta}=(s_\alpha,s_\beta):\Omega_\alpha\cap\Omega_\beta\to M\times_B M.\]
One readily checks that $f\circ \tilde p$ and $\tilde p\circ f$ are indeed equivalent to the identities.

\end{example}

\section{The new $K$-theory model}\label{sec:newKtheory}
Finally, we have all prerequisites for introducing the new $K$-theory model.
\begin{mdef}\label{def:FJModel}
The ``Farrell-Jones'' model for the $K$-theory of the leaf space of a foliation $\fol$ is
\[K^{-*}_{FJ}(M/\mathcal{F}):=K_*(\cfrak(\FC)).\]
We also define the $K$-theory with coefficients in a $C^*$-algebra $D$ by
\[K^{-*}_{FJ}(M/\mathcal{F},D):=K_*(\cfrak(\FC,D)).\]
\end{mdef}
It will be more convenient to perform proofs using the even more general groups $K_*(\kfrak(\FC,D))$. However, we will not give them any special name.

\begin{rem}\label{rem::compactification:corona}
A practical consequence arises from the fact that in  the definition of the foliated cone $\FC$ we did not crush $\{0\}\times M$ to a point:
The $C^*$-algebra $C_0(\FC)\cong C_0([0,\infty)\times M)$ is contractible and therefore the short exact sequence
\[0\to C_0([0,\infty)\times M)\otimes D\to\kbar(\FC,D)\to\kfrak(\FC,D)\to 0\]
implies that the quotient maps induce canonical isomorphisms
\[K_*(\kbar(\FC,D))\cong K_*(\kfrak(\FC,D)).\]
Indeed, working with the compactification instead of the corona makes calculating the examples in the following section much easier. 
\end{rem}

The rest of this section is devoted to showing basic properties of the ``Farrell-Jones'' $K$-theory which justify why it is a good $K$-theory model.

One might expect that $K$-theory models of ``spaces'' have some ring structure and contravariant functoriality, and indeed:
\begin{thm}\label{thm:Ktheorymodelfunctor}
The groups $K^*_{FJ}(M/\mathcal{F})$ constitute a contravariant functor from the category of foliations and smooth maps between leaf spaces into the category of $\Z_2$-graded, graded commutative rings. 
The obvious analogous statements hold for the more general
 groups 
$K^*_{FJ}(M/\mathcal{F},D)$ and $K_*(\kfrak(\FC,D))$.
\end{thm}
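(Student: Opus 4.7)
The plan is to assemble the theorem out of pre-existing functoriality results combined with a naturality check for the multiplication $\nabla$ of pointwise products.

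First I would construct the induced maps. Given a smooth map of leaf spaces $f : M_1/\cF_1 \to M_2/\cF_2$, Theorem \ref{thm:conefunctoriality} produces a coarse equivalence class of coarse Borel maps $f_* : \cO(M_1,\cF_1) \to \cO(M_2,\cF_2)$, and Proposition \ref{prop:coronasfunctorial} then yields a well-defined $*$-homomorphism $f_*^* : \cfrak(\cO(M_2,\cF_2)) \to \cfrak(\cO(M_1,\cF_1))$, likewise $\cfrak(\cO(M_2,\cF_2),D) \to \cfrak(\cO(M_1,\cF_1),D)$ and $\kfrak(\cO(M_2,\cF_2),D) \to \kfrak(\cO(M_1,\cF_1),D)$. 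Passing to $K$-theory (and reindexing via $K^{-*}_{FJ}:=K_*$) gives the contravariant assignment on morphisms, and the composition of these three functorial steps is itself functorial, so $(g\circ f)^* = f^*\circ g^*$ and $\id^* = \id$.

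The ring-homomorphism property is the content that needs a small check. For any coarse Borel map $\varphi : X \to Y$ of proper metric spaces the pullback is given by precomposition, so for $g \in \kfrak(Y;D)$, $h \in \kfrak(Y;E)$ the computation
\[
\nabla_X(\varphi^*g \otimes \varphi^*h)(x) = g(\varphi(x))\,h(\varphi(x)) = \varphi^*\bigl(\nabla_Y(g\otimes h)\bigr)(x)
\]
shows $\varphi^*\circ\nabla_Y = \nabla_X\circ(\varphi^*\otimes\varphi^*)$ as $*$-homomorphisms $\kfrak(Y;D)\otimes\kfrak(Y;E)\to\kfrak(X;D\otimes E)$, and the analogous identity holds for the stable versions after replacing coefficients by $D\otimes\K$, $E\otimes\K$. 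Combining this with naturality of the exterior product in $K$-theory—which sends $(\varphi^*)_*(x)\times(\varphi^*)_*(y)$ to $(\varphi^*\otimes\varphi^*)_*(x\times y)$—and unwinding Definition \ref{def:coronaproduct} yields
\[
f^*(x\cdot y) = f^*(x)\cdot f^*(y)
\]
in all three variants. Graded commutativity and associativity of the product on each $K_*(\cfrak(\FC))$, $K_*(\cfrak(\FC,D))$, $K_*(\kfrak(\FC,D))$ are already recorded after Definition \ref{def:coronaproduct}, so this upgrades the contravariant functor to the category of $\Z_2$-graded, graded commutative rings.

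The only part that is not completely formal is the verification that the induced pullback really respects $\nabla$ even though $f_*$ is only a \emph{coarse equivalence class} of coarse Borel maps rather than an honest pointwise map; however, this is not a genuine obstacle because the pointwise identity above descends to the $*$-homomorphism class, independently of the chosen representative, and vanishing-variation together with the quotient by $C_0$-functions absorbs the finite-distance ambiguity of $f_*$. Thus the main work of the theorem is really done by Theorem \ref{thm:conefunctoriality} and Proposition \ref{prop:coronasfunctorial}; the present statement is their formal consequence.
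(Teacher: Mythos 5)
Your proposal is correct and follows essentially the same route as the paper, whose proof simply combines Theorem \ref{thm:conefunctoriality} with Proposition \ref{prop:coronasfunctorial} for functoriality and invokes Definition \ref{def:coronaproduct} for the multiplicative structure. The only difference is that you spell out the compatibility of $\nabla$ with pullbacks (and hence the ring-homomorphism property of $f^*$), which the paper leaves implicit as part of the cited multiplicative structure; this is a harmless and correct elaboration, not a different argument.
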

\begin{proof}
Functoriality is the result of combining Theorem \ref{thm:conefunctoriality} with Proposition \ref{prop:coronasfunctorial}.
The multiplicative structures are provided by Definition \ref{def:coronaproduct}.
\end{proof}

Furthermore, we mention the following two examples, which serve as a basic test for the Farrell-Jones $K$-theory model.
\begin{example}\label{ex:trivialfoliation}
Let $M$ be a compact manifold. 
From \cite[Proposition 6.2]{WulffCoassemblyRinghomo} we know that the inclusion $C(M)\otimes\K\subset\cfrak(\mathcal{O}M)$ induces a ring isomorphism
\[K^*(M)\xrightarrow{\cong}K^*_{FJ}(M),\]
the right hand side being $K^*_{FJ}(M/\cF)$ for the trivial $0$-dimensional foliation on $M$. 
Using Example \ref{ex:smoothmaps} one readily checks that this isomorphism is natural under smooth maps of manifolds.
Thus, our $K$-theory of leaf spaces extends $K$-theory of ordinary manifolds.

Furthermore, note that the identification of \ref{rem::compactification:corona} allows an easy description of a (right-sided and therefore also two-sided) inverse to this isomorphism: It is simply induced by the restriction
\[\cbar(\cO M)\to C(M)\otimes\K,\quad f\mapsto f|_{\{0\}\times M}\,.\]
\end{example}

\begin{example}\label{ex:submersion}
Assume that the foliation $\fol$ comes from a surjective submersion $p:M\to B$ with all fibers connected. 
We saw in Example \ref{example:submersionsmoothmaps} that there is an induced isomorphism of leaf spaces $\tilde p:M/\mathcal{F}\cong B$ inducing the coarse equivalence $\tilde p_*=\id_{[0,\infty)}\times p:\FC\to\cO B$ between the corresponding foliated cones.

Combining this with the previous example, there is a canonical ring isomorphism
\[K^*(B)\cong K^*_{FJ}(B)\xrightarrow[\cong]{\tilde p^*}K^*_{FJ}(M/\mathcal{F})\]
induced by the inclusion 
\[C(B)\otimes\K\hookrightarrow \cfrak(\FC),\quad  g\mapsto \overline{ (t,x)\mapsto p^*g(x)}.\]
Here and in the following, the overline denotes the equivalence class.

Analogously there are multiplicative homomorphisms
\[K_{-*}(C(B)\otimes D)\cong K_{FJ}^*(B,D)\cong K_{FJ}^*(M/\mathcal{F},D) \]
for any coefficient $C^*$-algebra $D$.

\end{example}

\section{Nontrivial examples}\label{sec:examples}
In this section, we give some more examples of nontrivial elements in the ring $K^*_{FJ}(M/\mathcal{F})$.

In general, one way of doing so is by constructing a continuous map $\phi:\FC\to X$ of vanishing variation into some compact metric space $X$ and using it to pull back elements of $K^*(X)$. 
More precisely, there is an induced $*$-homomorphism $\phi^*:C(X)\otimes\K\to\cfrak(\FC)$ and subsequently a homormorphism $\phi^*:K^*(X)\to K^*_{FJ}(M/\cF)$. Note that $\phi^*$ is in fact a ring homomorphism, because multiplicativity follows from the commutative diagram
\[\xymatrix{
\FC\ar[d]_{\Delta}\ar[rr]^{\phi}&&\ar[d]^{\Delta}X
\\\FC\times\FC\ar[rr]_{\phi\times\phi}&&X\times X.
}\]

We would also like to have a method of distinguishing elements of $K^*_{FJ}(M/\mathcal{F})$. In some situations, an effective way to do this is to use the homomorphism
\[p^*:K^*_{FJ}(M/\mathcal{F})\to K^*(M)\]
induced by the canonical smooth map $p:M\to M/\mathcal{F}$ of Example \ref{ex:projtoleafspace}.
After identifying $K^*_{FJ}(M/\mathcal{F})$ with $K_{-*}(\cbar(\FC))$ and applying the isomorphism from Example \ref{ex:trivialfoliation}, one easily sees that $p^*$ is induced by the restriction $*$-homomorphism
\begin{equation}\label{eq:pstar}
\cbar(\FC)\to C(M)\otimes \K,\quad f\mapsto f|_{\{0\}\times M}\,.
\end{equation}
Note that we would not have this simple formula if we had stuck to the stable Higson corona instead of the stable Higson compactification.

\begin{example}
Consider the one dimensional foliation of the 2-torus sketched in figure \ref{fig:TorusFoliation}.
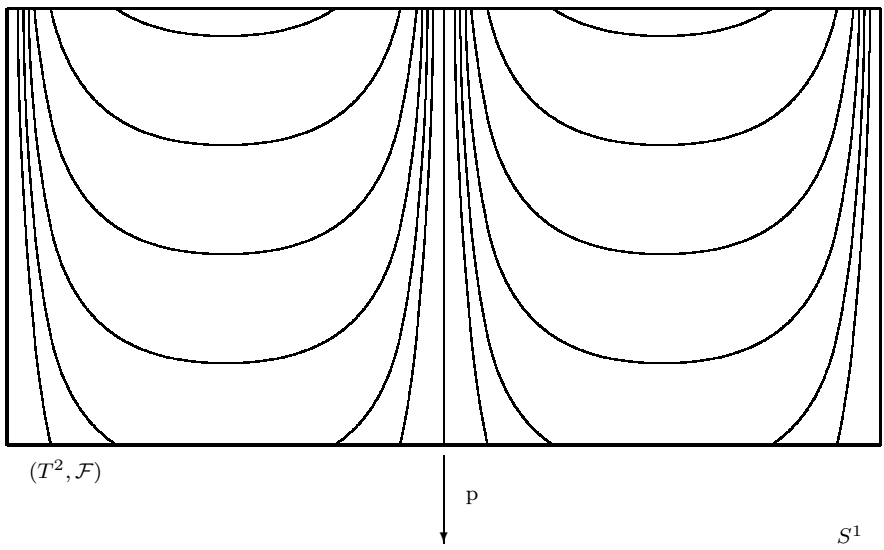
\begin{figure}[htbp]
\begin{center}
\setlength{\unitlength}{0.1\textwidth}
\begin{picture}(9,5)(-0.5,-1)
\thicklines
\put(0,0){\line(1,0){8}}
\put(0,4){\line(1,0){8}}
\put(0,0){\line(0,1){4}}
\put(8,0){\line(0,1){4}}

\put(0,-1){\line(1,0){8}}

\thinlines

\qbezier(2,0.75)(1.375,0.75)(1,1)%
\qbezier(2,1.75)(1.375,1.75)(1,2)
\qbezier(2,2.75)(1.375,2.75)(1,3)
\qbezier(2,3.75)(1.375,3.75)(1,4)

\qbezier(2,0.75)(2.625,0.75)(3,1)%
\qbezier(2,1.75)(2.625,1.75)(3,2)
\qbezier(2,2.75)(2.625,2.75)(3,3)
\qbezier(2,3.75)(2.625,3.75)(3,4)

\qbezier(1,1)(0.55,1.3)(0.4,2)%
\qbezier(1,2)(0.55,2.3)(0.4,3)
\qbezier(1,3)(0.55,3.3)(0.4,4)
\qbezier(1,0)(0.55,0.3)(0.4,1)

\qbezier(3,1)(3.45,1.3)(3.6,2)%
\qbezier(3,2)(3.45,2.3)(3.6,3)
\qbezier(3,3)(3.45,3.3)(3.6,4)
\qbezier(3,0)(3.45,0.3)(3.6,1)

\qbezier(0.4,2)(0.3,2.47)(0.25,3)%
\qbezier(0.4,3)(0.3,3.47)(0.25,4)
\qbezier(0.4,0)(0.3,0.47)(0.25,1)
\qbezier(0.4,1)(0.3,1.47)(0.25,2)

\qbezier(3.6,2)(3.7,2.47)(3.75,3)%
\qbezier(3.6,3)(3.7,3.47)(3.75,4)
\qbezier(3.6,0)(3.7,0.47)(3.75,1)
\qbezier(3.6,1)(3.7,1.47)(3.75,2)

\qbezier(0.25,3)(0.2,3.53)(0.2,4)
\qbezier(0.25,2)(0.15,3.06)(0.15,4)
\qbezier(0.25,1)(0.1,2.59)(0.1,4)

\qbezier(3.75,3)(3.8,3.53)(3.8,4)
\qbezier(3.75,2)(3.85,3.06)(3.85,4)
\qbezier(3.75,1)(3.9,2.59)(3.9,4)

\put(4,0){\line(0,1){4}}

\qbezier(6,0.75)(5.375,0.75)(5,1)%
\qbezier(6,1.75)(5.375,1.75)(5,2)
\qbezier(6,2.75)(5.375,2.75)(5,3)
\qbezier(6,3.75)(5.375,3.75)(5,4)

\qbezier(6,0.75)(6.625,0.75)(7,1)%
\qbezier(6,1.75)(6.625,1.75)(7,2)
\qbezier(6,2.75)(6.625,2.75)(7,3)
\qbezier(6,3.75)(6.625,3.75)(7,4)

\qbezier(5,1)(4.55,1.3)(4.4,2)%
\qbezier(5,2)(4.55,2.3)(4.4,3)
\qbezier(5,3)(4.55,3.3)(4.4,4)
\qbezier(5,0)(4.55,0.3)(4.4,1)

\qbezier(7,1)(7.45,1.3)(7.6,2)%
\qbezier(7,2)(7.45,2.3)(7.6,3)
\qbezier(7,3)(7.45,3.3)(7.6,4)
\qbezier(7,0)(7.45,0.3)(7.6,1)

\qbezier(4.4,2)(4.3,2.47)(4.25,3)%
\qbezier(4.4,3)(4.3,3.47)(4.25,4)
\qbezier(4.4,0)(4.3,0.47)(4.25,1)
\qbezier(4.4,1)(4.3,1.47)(4.25,2)

\qbezier(7.6,2)(7.7,2.47)(7.75,3)%
\qbezier(7.6,3)(7.7,3.47)(7.75,4)
\qbezier(7.6,0)(7.7,0.47)(7.75,1)
\qbezier(7.6,1)(7.7,1.47)(7.75,2)

\qbezier(4.25,3)(4.2,3.53)(4.2,4)
\qbezier(4.25,2)(4.15,3.06)(4.15,4)
\qbezier(4.25,1)(4.1,2.59)(4.1,4)

\qbezier(7.75,3)(7.8,3.53)(7.8,4)
\qbezier(7.75,2)(7.85,3.06)(7.85,4)
\qbezier(7.75,1)(7.9,2.59)(7.9,4)

\put(4,-0.1){\vector(0,-1){0.8}}
\put(4.2,-0.5){\mbox{p}}
\put(7.6,-0.9){$S^1$}
\put(0.2,-0.3){$(T^2,\mathcal{F})$}

\end{picture}
\caption{A one dimensional foliation on the two torus.}
\label{fig:TorusFoliation}
\end{center}
\end{figure}
The slices $T^2\times\{t\}\subset\mathcal{O}(T^2,\mathcal{F})$ of the foliated cone become larger and larger in the horizontal direction as $t\to \infty$. Given a unitary over $C(S^1)$, we can pull it back to $T^2$ via the projection $p$ onto the horizontal $S^1$. Subsequently, the variation of the unitary may be pushed into small neighborhoods of the compact leaves, where the metric blows up horizontally. By doing this, we obtain a unitary in $\widetilde{\cbar(\FC)}$ and thus an element of $K_1(\cfrak(\FC))$ which is kind of a pullback of an element of $K^1(S^1)$ under the projection $p$. 

The precise calculations involved are quite elaborate. We will perform them for a more general setup in Example \ref{example:Sn}.
\end{example}

\begin{example}\label{example:Sn}
Let $\mathcal{F}$ be a one dimensional foliation on some compact manifold $M$ and $L$ a leaf of this foliation which is diffeomorphic to $S^1=\R/\Z$. Assume that the normal bundle of $L$ is trivial, such that there is a tubular neighborhood of $L$ diffeomorphic to $D^n\times S^1$ in which $L$ corresponds to $\{0\}\times S^1$. Assume further that within this neighborhood the foliation is given by the trajectories of a unit vector field of the form
\[v(x,s)=\left(\lambda(x)x,\sqrt{1-\lambda(x)^2\|x\|^2}\right)\in \R^n\times\R\cong T_{(x,s)}(D^n\times S^1)\]
 for some continuous function $\lambda:D^n\to\R$. In particular, the vector field is $S^1$-invariant in this neighborhood.

The objectives of this example are to construct a ring homomorphism $K^*(S^n)\cong\Z[X]/(X^2)\to K^*_{FJ}(M/\cF)$ and
to show that it is injective for $M=S^n\times S^1$. Thus, it is an example with nontrivial ring structure which is quite different from example \ref{ex:submersion}.

We are free to choose any Riemannian metric $g$ on $M$ to construct the foliated cone. Therefore, we may assume without loss of generality that it is the canonical one on the tubular neighborhood $D^n\times S^1$.

The first step is to construct a continuous map of vanishing variation $\Phi:\FC\to S^n$ as follows.
Consider the map 
\[\phi:D^n\times S^1\times [0,\infty)\to \R^n,\quad(x,s,t)\mapsto\begin{cases}\frac{\log(t\|x\|+1)}{\log(t+1)}\frac{x}{\|x\|}&t>0\\x&t=0.\end{cases}\]
It is smooth and maps $S^{n-1}\times S^1\times[0,\infty)$ to $S^{n-1}$. Furthermore, let $\exp:\R^n\to S^n$ be the exponential map at the north pole $e$ of the sphere. It maps $\pi\cdot S^{n-1}$ to the south pole $-e$ and is Lipschitz continuous with constant 1.

\begin{lem}
The continuous map
\begin{align*}
\Phi:\FC&\to S^n
\\x&\mapsto\begin{cases}\exp(\pi\cdot \phi(x))&x\in D^n\times S^1\times[0,\infty)\\-e&\text{else}\end{cases}
\end{align*}
has vanishing variation.
\end{lem}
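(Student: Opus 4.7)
The plan is to deduce vanishing variation from a Lipschitz-type estimate: I will show that $\|d_{\FC}\Phi\|_{\mathrm{op}} = O(1/\log(t+1))$ on the tubular neighborhood and then piece this together with the constant behavior of $\Phi$ outside. First I check continuity of $\Phi$: at the boundary $\|x\|=1$ the interior formula gives $\phi=\frac{\log(t+1)}{\log(t+1)}\cdot \frac{x}{\|x\|}=\xi\in S^{n-1}$, so $\exp(\pi\xi)=-e$, matching the extension outside the tubular neighborhood. Continuity at $t=0$ and in the interior is clear from the formula.

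Since $\exp:\R^n\to S^n$ is $1$-Lipschitz, it suffices to bound $\|d_{\FC}\phi\|_{\mathrm{op}}$ on the tubular neighborhood. Writing $\phi=f(t,\|x\|)\xi$ with $f(t,u)=\log(tu+1)/\log(t+1)$ and $\xi=x/\|x\|$, I will compute directional derivatives in an $\FC$-orthonormal frame adapted to the splitting $T\FC = \R\partial_t\oplus T\cF\oplus N\cF$ along $v=(\lambda(x)x,\sqrt{1-\lambda^2\|x\|^2})$. The $\partial_t$-derivative is $O(1/\log(t+1))$ by direct computation. The leafwise derivative along $v$ produces the factor $\lambda\|x\|t/(t\|x\|+1)\le \lambda_{\max}$, which is automatically bounded, so this derivative is again $O(1/\log(t+1))$. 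The normal derivative in the radial/$s$-plane (perpendicular to $v$ within this $2$-plane) acquires an extra $1/\sqrt{1+t^2}$ from the transverse metric blow-up and also yields $O(1/\log(t+1))$.

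The delicate term is the $D^n$-angular derivative. Here $\partial_\theta\phi = f\,\partial_\theta\xi$ has $M$-magnitude $f/\|x\|=\log(t\|x\|+1)/(\|x\|\log(t+1))$ per unit angular $M$-arclength, with an apparently dangerous $1/\|x\|$ factor. However, the angular direction is purely transverse to $\cF$ (the vector field $v$ lies entirely in the radial/$s$-plane), so the $\FC$-unit vector picks up an additional $1/\sqrt{1+t^2}$, giving the bound $\log(t\|x\|+1)/(\|x\|\sqrt{1+t^2}\log(t+1))$. The elementary inequality $\log(1+u)\le u$ applied with $u=t\|x\|$ absorbs the $1/\|x\|$, yielding $\le t/(\sqrt{1+t^2}\log(t+1)) \le 1/\log(t+1)$. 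This absorption is the crux of the estimate, and is the step I expect to be the main obstacle.

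Combining the four bounds gives $\|d_{\FC}\Phi\|_{\mathrm{op}} = O(1/\log(t+1))$ on the tubular neighborhood. For the conclusion, fix $R>0$ and take $z,z'\in\FC$ with $d_{\FC}(z,z')\le R$. Choosing a near-geodesic path from $z$ to $z'$ and decomposing it into pieces inside and outside the tubular neighborhood, the inside pieces contribute at most $O(R/\log(t-R+1))$ by the Lipschitz bound (since the $t$-coordinate along the path lies in $[t-R,t+R]$), while the outside pieces contribute nothing because $\Phi\equiv -e$ there. Continuity of $\Phi$ across the boundary ensures no extra jump at crossings. Letting $t\to\infty$ yields $\operatorname{Var}_R\Phi(z)\to 0$, which is vanishing variation.
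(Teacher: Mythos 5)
Your proposal is correct and follows essentially the same route as the paper: reduce to $\phi$ via the $1$-Lipschitz exponential map, establish a pointwise bound on $d\phi$ relative to the cone metric with constant $O(1/\log(t+1))$, and absorb the dangerous $1/\|x\|$ in the angular term using $\log(1+t\|x\|)\le t\|x\|$ together with the transverse blow-up factor $\sqrt{1+t^2}$ — exactly the paper's key estimate. The only difference is organizational: you work in an orthogonal frame adapted to $v$ and its normal complement, whereas the paper bounds the quadratic form directly by splitting $\xi$ into radial and angular parts and minimizing over the $S^1$-component.
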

\begin{proof}
Note that it is enough to show that $\phi$ has vanishing variation with respect to the restricted metric on $D^n\times S^1\times [0,\infty)$. To this end, let $w=(\xi,\mu)\in \R^n\times\R\cong T_{(x,s)}(D^n\times S^1)$. Denote by $w_{\|}=\langle w,v(x,s)\rangle\cdot v(x,s)$ its  component tangential to the leaves. Furthermore, we decompose the $\R^n$-component $\xi=\xi_\perp+\xi_\|$ of $w$ into a component $\xi_\|:=\frac{\langle\xi,x\rangle}{\|x\|^2}x$ parallel to $x$ and a component $\xi_\perp$ perpendicular to it. In the norm corresponding to the Riemannian metric $g_t:=g+t^2g_N$, we have
\begin{align*}
\|w\|^2_t&=\|w\|^2+t^2\|w-w_{\|}\|^2
\\&=(1+t^2)\|w\|^2-2t^2\langle w,w_{\|}\rangle+t^2\|w_{\|}\|^2
\\&=(1+t^2)\|w\|^2-t^2\langle w,v(x,s)\rangle^2
\\&=(1+t^2)(\|\xi\|^2+\mu^2)-t^2\left(\lambda(x)\langle\xi,x\rangle+\mu\sqrt{1-\lambda(x)^2\|x\|^2}\right)^2.
\end{align*}
By minimizing this quadratic expression in $\mu$ and defining  $L:=\|\lambda\|_\infty$,
 we obtain the inequality
\[\|w\|^2_t\geq(1+t^2)\|\xi_\perp\|^2+\frac{1+t^2}{1+t^2L^2\|x\|^2}\|\xi_{\|}\|^2.\]
Thus, the norm of the tangential vector $w+\eta\frac{\partial}{\partial t}=(\xi,\mu,\eta)\in\R^n\times\R\times\R\cong T_{(x,s,t)}\FC$ is bounded from below by
\[\left\|w+\eta\frac{\partial}{\partial t}\right\|^2\geq(1+t^2)\|\xi_\perp\|^2+\frac{1+t^2}{1+t^2L^2\|x\|^2}\|\xi_{\|}\|^2+\eta^2.\]
On the other hand, we define $f(r,t):=\frac{\log(rt+1)}{\log(t+1)}$ and calculate
\[
D\phi\left(w+\eta\frac{\partial}{\partial t}\right)=f(\|x\|,t)\frac{\xi_\perp}{\|x\|}+\frac{\partial f}{\partial r}(\|x\|,t)\xi_{\|}+\eta\frac{\partial f}{\partial t}(\|x\|,t)\frac{x}{\|x\|}.
\]
Thus,
\begin{align*}
&\left\|D\phi\left(w+\eta\frac{\partial}{\partial t}\right)\right\|^2=\frac{f(\|x\|,t)^2}{\|x\|^2}\|\xi_\perp\|^2+\left(\frac{\partial f}{\partial r}(\|x\|,t)\|\xi_{\|}\|\pm\frac{\partial f}{\partial t}(\|x\|,t)\eta\right)^2
\\&\qquad\leq\left( \frac{f(\|x\|,t)^2}{(1+t^2)\|x\|^2}+2\left(\frac{\partial f}{\partial r}(\|x\|,t)\right)^2\frac{1+t^2L^2\|x\|^2}{1+t^2}
+2\left(\frac{\partial f}{\partial t}(\|x\|,t)\right)^2\right)\left\|w+\eta\frac{\partial}{\partial t}\right\|^2.
\end{align*}
Vanishing variation of $\phi$ therefore follows from the fact that the three expressions
\begin{align*}
 \frac{f(r,t)^2}{(1+t^2)r^2}&=\frac{\log(rt+1)^2}{\log(t+1)^2(1+t^2)r^2}
 \\\left(\frac{\partial f}{\partial r}(r,t)\right)^2\frac{1+t^2L^2r^2}{1+t^2}&=\frac1{\log(t+1)}\cdot\frac{t^2}{1+t^2}\cdot\frac{1+t^2L^2r^2}{(tr+1)^2}
 \\\frac{\partial f}{\partial t}(r,t)&=\frac{r}{(rt+1)\log(t+1)}-\frac{\log(rt+1)}{(t+1)\log(t+1)^2}
\end{align*}
converge to $0$ uniformly in $r\in(0,1]$ for $t\to \infty$, as is readily verified.
\end{proof}
According to our remarks at the beginning of this section we obtain:
\begin{cor}
The map $\Phi$ induces  a ring homomorphism \[\Phi^*:K^*(S^n)\to K^*_{FJ}(M/\mathcal{F}).\]
\end{cor}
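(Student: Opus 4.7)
The plan is to apply directly the general machinery spelled out at the start of Section \ref{sec:examples}: having just verified that $\Phi: \FC \to S^n$ is continuous with vanishing variation, the corollary is essentially a formal payoff.

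Concretely, I would first observe that pullback by $\Phi$ yields a unital $*$-homomorphism $\Phi^*: C(S^n) \to \kbar(\FC)$. The key point is that continuous functions on the compact metric space $S^n$ are uniformly continuous, so their pullbacks by a vanishing variation map automatically have vanishing variation themselves; boundedness and continuity are obvious. Tensoring with $\id_{\K}$ yields a $*$-homomorphism $C(S^n) \otimes \K \to \cbar(\FC)$, and composing with the quotient $\cbar(\FC) \to \cfrak(\FC)$ produces the desired $*$-homomorphism into the stable Higson corona. On $K$-theory, using stability of $K$-theory (so that $K_*(C(S^n) \otimes \K) \cong K^{-*}(S^n)$) together with the defining identity $K_{-*}(\cfrak(\FC)) = K^*_{FJ}(M/\mathcal{F})$, this induces the homomorphism $\Phi^*: K^*(S^n) \to K^*_{FJ}(M/\mathcal{F})$.

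For multiplicativity, I would invoke the commutative square
\[
\xymatrix{
\FC \ar[d]_{\Delta} \ar[rr]^{\Phi} & & S^n \ar[d]^{\Delta} \\
\FC \times \FC \ar[rr]_{\Phi \times \Phi} & & S^n \times S^n
}
\]
exactly as in the discussion opening Section \ref{sec:examples}. Both ring structures in play (the usual one on $K^*(S^n)$ and the coronal product of Definition \ref{def:coronaproduct}) are defined as exterior product followed by pullback along the diagonal, so naturality of the exterior product combined with this square forces $\Phi^*$ to be a ring homomorphism.

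There is no real obstacle here: the substantive technical work was already completed in the preceding vanishing variation lemma, and this corollary is simply its immediate consequence within the general framework already established.
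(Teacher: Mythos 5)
Your proposal is correct and follows exactly the route the paper takes: the paper's entire proof of this corollary is the phrase ``According to our remarks at the beginning of this section we obtain,'' and your argument is precisely a fleshed-out version of those remarks (pullback of uniformly continuous functions along a vanishing-variation map, stabilization, passage to the corona, and the diagonal square for multiplicativity). No gaps.
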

Furthermore, the composition $p^*\circ\Phi^*:K^*(S^n)\to K^*(M)$ is induced by the continuous map 
\[\psi:M\to S^n,\quad x\mapsto \begin{cases} \exp(\pi\cdot y)&\text{if }x=(y,s)\in D^n\times S^1\\-e&\text{else}.\end{cases}\]
If we specialize to the case $M=S^n\times S^1$ where $D^n\times S^1\subset M$ is assumed to come from an inclusion $D^n\subset S^n$, then the map $\psi$ is homotopic to the canonical projection $S^n\times S^1\to S^n$. Thus, $p^*\circ\Phi^*=\psi^*:K^*(S^n)\to K^*(S^n\times S^1)$ is injective. 
 In particular, the ring homomorphism 
\[\Phi^*:K^*(S^n)\cong\Z[X]/(X^2)\hookrightarrow K^*_{FJ}(M/\mathcal{F})\]
 is injective. We have thus  detected some nontrivial ring structure inside of $K^*_{FJ}(M/\mathcal{F})$. 
\end{example}

The relation of the new $K$-theory model to index theory will be discussed in Section \ref{sec:indexcalculations}, where
Corollary \ref{cor:twistingbyasymptoticallyleafwisebundles} gives a formula for indices of longitudinally elliptic operators  twisted by vector bundles $F\to M$ whose classes lie in the image of $p^*:K_{FJ}^0(M/\mathcal{F})\to K^0(M)$.
The following definition provides an analytic way to verify this property. 
Examples of such bundles are obtained by smoothing the construction in the previous example.
\begin{mdef}\label{asymptoticallyleafwisebundles}
Let $F\to M$ be a smooth vector bundle. We say that it is \emph{asymptotically a bundle over the leaf space} if there is a  smooth family of projections $(P_t)_{t\geq 0}\in C^\infty(M;\K) $ such that $F$ is represented by the projection $P_0$ and the norms 
\[\|dP_t|_{T\mathcal{F}}\|=\sup_{0\not=X\in T\mathcal{F}}\frac{\|dP_t(X)\|}{\|X\|}\]
converge to zero for $t\to\infty$.
\end{mdef}
Here, the norm of $dP_t(X)$ is calculated in the $C^*$-algebra $C(M;\K)$.

By reparametrising the $t$-parameter, we can always achieve that $\|\frac{\partial P_t}{\partial t}\|$ and $\frac1{\sqrt{1+t^2}}\|dP_t|_{N\mathcal{F}}\|$ converge to zero for $t\to\infty$, too. Choose a monotonously decreasing function $K:[0,\infty)\to [0,\infty)$ converging to zero at infinity such that
\[\forall t:\,K(t)\geq\max\left(\|dP_t|_{T\mathcal{F}}\|,\,\frac1{\sqrt{1+t^2}}\|dP_t|_{N\mathcal{F}}\|,\,\left\|\frac{\partial P_t}{\partial t}\right\|\right)\,.\]

The $P_t$ compose to give a projection $P\in C_b(\FC;\K)$. If $\gamma:[0,1]\to\FC$ is a smooth path with $t$-component bigger then some fixed $T$, then we decompose $\gamma'=v_L+v_N+\lambda\frac{\partial}{\partial t}$ into longitudinal, normal and $\partial/\partial t$-component and calculate
\begin{align*}
\|P(\gamma(1))-P(\gamma(0))\|&\leq\int_0^1\|(P\circ\gamma)'(\tau)\|d\tau
\\&\leq\int_0^1\left\|dP(v_L(\tau))+dP(v_N(\tau))+\lambda(\tau)\frac{\partial P_t}{\partial t}\right\|d\tau
\\&\leq K(T)\cdot\int_0^1\left(\|v_L(\tau)\|+\sqrt{1+t(\tau)^2}\cdot\|v_N(\tau)\|+|\lambda(\tau)|\right)d\tau
\\&\leq 3K(T)\cdot \int_0^1\|\gamma'(\tau)\|d\tau=3K(T)\cdot L(\gamma)
\end{align*}
This calculation shows that $P$ has vanishing variation, thus 
$P\in\cbar(\FC)$.
Let $x_F$ be its class in $K^0_{FJ}(M/\mathcal{F})$. Formula \eqref{eq:pstar} now immediately implies  
$[F]=p^*(x_F)$. 

Note that the property of being asymptotically a bundle over the leaf space only implies the existence of such an $x_F$, not its uniqueness. Indeed, a different choice of the family of projections $(P_t)_{t\geq 0}$ might yield a different $x_F$.

The element $x_F$ will become important in Corollary \ref{cor:twistingbyasymptoticallyleafwisebundles}.
It should be pointed out that the index $\ind(D_F)$, which is computed in this corollary, does \emph{not} depend on the above-mentioned  choice of the $x_F$ and the family $(P_t)_{t\geq0}$.

\section{Connes' foliation algebra}\label{sec:foliationalgebras}
We briefly recall the construction of Connes' foliation algebra $\FA$.
General references for this section are \cite[Sections 5,6]{ConSurvey}, \cite[Section 2.8]{ConNCG} or \cite[Section 5]{Kordyukov}.

Instead of working with half densities as in \cite{ConSurvey,ConNCG}, we fix once and for all a smooth, positive leafwise $1$-density $\alpha\in C^{\infty}(M,|T\mathcal{F}|)$. It pulls back to smooth densities $r^*\alpha$ on $G_x$ and $s^*\alpha$ on $G^x$ for all $x\in M$ and we will always use these densities for integration. In particular, if $\gamma\in G$ with $x=s(\gamma), y=r(\gamma)$ and $f,g$ are functions on $G^y,G_x$, respectively, then we shall write
\begin{align*}
\int_{\gamma_1\gamma_2=\gamma}f(\gamma_1)g(\gamma_2):&=\int_{\gamma_1\in G^y}f(\gamma_1)g(\gamma_1^{-1}\gamma)s^*\alpha(\gamma_1)
=\int_{\gamma_2\in G_x}f(\gamma\gamma_2^{-1})g(\gamma_2)r^*\alpha(\gamma_2)\,.
\end{align*}

In case $G$ is Hausdorff, the leafwise convolution product
\[(f*g)(\gamma)=\int_{\gamma_1\gamma_2=\gamma} f(\gamma_1)g(\gamma_2)\]
and the involution
\[f^*(\gamma)=\overline{f(\gamma^{-1})}\]
turn the vector space $C_c^\infty(G)$ of smooth complex valued functions with compact support on $G$ into a complex $*$-algebra.

If, however, the manifold structure on $G$ is non-Hausdorff, then $C_c^\infty(G)$ is by definition the vector space of complex functions on $G$ which are finite sums of smooth functions with compact support in some coordinate patch of $G$.
In this case, the convolution product of two functions in $C_c^\infty(G)$ is again in $C_c^\infty(G)$, so $C_c^\infty(G)$ is a complex $*$-algebra in the non-Hausdorff case, too.
This technicality does not interfere with our arguments at all, because we can always assume without loss of generality that our functions are compactly supported in coordinate patches.

For each $x\in M$, the Hilbert space $L^2(G_x)$ is defined by means of the density $r^*\alpha$ on $G_x$.
There is a representation 
$\pi_x:\,C_c^{\infty}(G)\to\mathfrak{B}(L^2(G_x))$
 given by
\[(\pi_x(f)\xi)(\gamma)= \int_{\gamma_1\gamma_2=\gamma}f(\gamma_1)\xi(\gamma_2)\]
for all $f\in C_c^\infty(G)$, $\xi\in L^2(G_x)$ and $\gamma\in G_x$.
\begin{mdef}
The reduced foliation algebra $\FA$ is defined as the completion of $C_c^\infty(G)$ in the pre-$C^*$-norm given by $\|f\|_r=\sup_{x\in M}\|\pi_x(f)\|$.
\end{mdef}

\begin{rem}
All the constructions above work equally well and give the same results if we use continuous instead of smooth functions everywhere.
Note, however, that in this context the definition of continuous functions on a non-Hausdorff $G$ has to be adapted analogously. 
\end{rem}

\begin{mdef}[(\cite{ConSurvey})]
Connes' $K$-theory model for the leaf space of the foliation $\fol$ is
\[K_C^*(M/\mathcal{F}):=K_{-*}(\FA).\]
\end{mdef}

The reduced foliation algebra $\FA$ can be understood as a sub-$C^*$-algebra of $\B\left(\bigoplus_{x\in M}L^2(G_x)\right)$.
We denote the canonical faithful representation by 
\[\pi=\bigoplus_{x\in M}\pi_x:\,\FA\to\B\left(\bigoplus_{x\in M}L^2(G_x)\right)\]
There is also a canonical faithful representation  
\[\tau=\bigoplus_{x\in M}\tau_x:\,C(M)\to\B\left(\bigoplus_{x\in M}L^2(G_x)\right)\]
given by $\tau_x(g)\xi:=r^*g\cdot \xi$.
For $f\in C_c(G)$ and $g\in C(M)$, the pointwise products $r^*g\cdot f$, $s^*g\cdot f$ lie in $C_c(G)$ and
\begin{equation}\label{eq:pre:multiplierrepresentations}
\tau(g)\pi(f)=\pi(r^*g\cdot f),\quad\pi(f)\tau(g)=\pi(s^*g\cdot f).
\end{equation}

\begin{lem}\label{lem:FA:subofmultiplier}
$C(M)$ is canonically a sub-$C^*$-algebra of the multiplier algebra $\mathcal{M}(\FA)$ of $\FA$.
For $f\in \FA$ and $g\in C(M)$ we have
\begin{equation}\label{eq:FA:multiplierrepresentations}
\tau(g)\pi(f)=\pi(gf),\quad \pi(f)\tau(g)=\pi(fg).
\end{equation}
Furthermore, if $g\in C_c(G)$ then
\begin{equation}\label{eq:FA:multiplierforcontfus}
gf=r^*g\cdot f,\quad fg=s^*g\cdot f.
\end{equation}
\end{lem}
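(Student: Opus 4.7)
The strategy is to use the two faithful representations $\pi,\tau$ from Connes' construction on the Hilbert space $H=\bigoplus_{x\in M}L^2(G_x)$ to embed $C(M)$ into the multiplier algebra $\mathcal{M}(\FA)$, by showing that $\tau(C(M))$ leaves $\pi(\FA)$ invariant on both sides inside $\BH$.

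First I would verify that for every $g\in C(M)$ and $f\in \FA$ the operators $\tau(g)\pi(f)$ and $\pi(f)\tau(g)$ lie in $\pi(\FA)$. For $f\in C_c^\infty(G)$ this is exactly \eqref{eq:pre:multiplierrepresentations}; for general $f$ I would approximate by compactly supported functions and use that $\pi(\FA)$ is norm closed. Because $\pi$ is faithful, this allows the unambiguous definitions $\pi(gf):=\tau(g)\pi(f)$ and $\pi(fg):=\pi(f)\tau(g)$, and associativity of operator composition then yields the double centralizer identity $f_1(gf_2)=(f_1g)f_2$, so that the pair $(L_g,R_g)$ with $L_g(f)=gf$ and $R_g(f)=fg$ is a genuine multiplier of $\FA$. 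Formulas \eqref{eq:FA:multiplierrepresentations} and \eqref{eq:FA:multiplierforcontfus} are then immediate from this construction together with \eqref{eq:pre:multiplierrepresentations}, using faithfulness of $\pi$ once more.

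Next I would check that $g\mapsto (L_g,R_g)$ is a $*$-homomorphism. Linearity and multiplicativity are inherited from $\tau$. For the $*$-property one matches the abstract multiplier adjoint formula $(L,R)^*(f)=(R(f^*)^*,L(f^*)^*)$ with the convolution involution $f^*(\gamma)=\overline{f(\gamma^{-1})}$ on $C_c^\infty(G)$; a short pointwise calculation using $s(\gamma^{-1})=r(\gamma)$ reduces $R_g(f^*)^*$ to $L_{\bar g}(f)$, and similarly on the other side, giving $(L_g,R_g)^*=(L_{\bar g},R_{\bar g})$. Injectivity is then read off from \eqref{eq:FA:multiplierforcontfus}: evaluating $L_g(f)=r^*g\cdot f$ at any unit point $u(x)$ with $f\in C_c^\infty(G)$ chosen so that $f(u(x))\neq 0$ recovers $g(x)$, so $L_g=0$ forces $g\equiv 0$. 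An injective $*$-homomorphism of $C^*$-algebras is automatically isometric, yielding the claimed sub-$C^*$-algebra inclusion.

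The main obstacle is the $*$-operation step, since it requires carefully reconciling the abstract definition of the adjoint in $\mathcal{M}(\FA)$ with the concrete involution on $C_c^\infty(G)$; however, once the formulas are laid out it reduces to a one-line manipulation. All remaining steps are routine bookkeeping built on the two faithful representations and the fact that $\pi(\FA)$ is norm closed in $\BH$.
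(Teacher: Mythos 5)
Your proposal is correct and takes essentially the same route as the paper: the paper's (two-line) proof observes that \eqref{eq:pre:multiplierrepresentations} places $\tau(C(M))$ in the idealizer of $\pi(\FA)$ inside $\B\bigl(\bigoplus_{x\in M}L^2(G_x)\bigr)$ and invokes the canonical $*$-homomorphism from that idealizer to $\mathcal{M}(\FA)$, which is exactly the double-centralizer construction you carry out explicitly. The extra steps you spell out (norm-density of $C_c(G)$ in $\FA$, the $*$-compatibility check, injectivity) are the standard content hidden in the paper's appeal to ``canonical isometric $*$-homomorphism'' and are all fine.
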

\begin{proof}
Formula \eqref{eq:pre:multiplierrepresentations} implies that the image of $\tau$ lies in the largest sub-$C^*$-algebra 
\[D\subset \B\left(\bigoplus_{x\in M}L^2(G_x)\right)\]
which contains the image of $\pi$ as an (essential) ideal. Thus, there is a canonical isometric $*$-homomorphism
\[C(M)\to D\to \mathcal{M}(\FA).\]
Equations \eqref{eq:FA:multiplierrepresentations} and \eqref{eq:FA:multiplierforcontfus} are clear by definition.
\end{proof}

\section{Hilbert modules associated to vector bundles}\label{sec:modules}
Smooth $\Z_2$-graded hermitian vector bundles $E\to M$ give rise to $\Z_2$-graded Hilbert modules over $\FA$ which are particularly important in index theory. We review this construction as presented in \cite[Section 5.3]{Kordyukov}.
 For an introduction into the theory of Hilbert modules we refer to \cite{Lance}.

As this is the first section featuring $\Z_2$-gradings, we take the opportunity to fix some notation:
If $\cE$ is an ungraded Hilbert module over an ungraded $C^*$-algebra $A$ and $I,J\in\N$, then we denote 
$\cE^{I,J}=\cE^I\oplus\cE^J,$
where by definition the first summand is the even graded and the second summand is the odd graded part. In particular this applies to the cases $A=\C$, where $\cE$ is a Hilbert space, and $\cE=A$ being a $C^*$-algebra. For any $\Z_2$-graded Hilbert module $\cE$ we denote by $\B(\cE)$ and $\K(\cE)$ the $C^*$-algebras of adjointable respectively compact operators on $\cE$ equipped with the canonical $\Z_2$-grading. In the special case $\cE=A^{I,J}$ we obtain $\MA_{I,J}(A):=\K(A^{I,J})$, which is the $C^*$-algebra of $(I+J)\times(I+J)$ matrices over $A$ where the diagonal $I\times I$- and $J\times J$-blocks constitute the even part and the off-diagonal blocks are the odd part.

The symbol  $\hatotimes $ will always denote graded tensor products. More specifically, we use it in the context of $\Z_2$-graded $C^*$-algebras for the \emph{maximal} graded tensor product. The minimal graded tensor product will be denoted by $\hatotimes _{\min}$.

Finally, all types of morphisms between $\Z_2$-graded objects are always assumed to be grading preserving, even without explicit mention.

Back to foliations: to define the Hilbert module $\cE$ associated to $E$, let
$\cE^\infty:=C_c^\infty(G,r^*E)$ be the vector space of smooth, compactly supported sections of the bundle $r^*E\to G$. Again, if $G$ is non-Hausdorff, we define it by summing up smooth sections compactly supported in coordinate patches of $G$. There is a right module structure of $\cE^\infty$ over $C_c^\infty(G)$ by letting $f\in C_c^\infty(G)$ act on $\xi\in \cE^\infty$ by the formula
\[(\xi*f)(\gamma) :=\int_{\gamma_1\gamma_2=\gamma} \xi(\gamma_1)f(\gamma_2)\quad\forall\gamma\in G\,.\]
A $C_c^\infty(G)\subset \FA$-valued inner product $\langle\,\_\,,\,\_\,\rangle_{\cE^\infty}$ on $\cE^\infty$ is defined by
\[\langle\xi,\zeta\rangle_{\cE^\infty}(\gamma) :=\int_{\gamma_1\gamma_2=\gamma} \langle\xi(\gamma_1^{-1}),\zeta(\gamma_2)\rangle_{E_{r(\gamma_2)}}\,.\]
This inner product is positive and defines a norm $\|\xi\|_r:=\|\langle\xi,\xi\rangle_{\cE^\infty}\|_r^{1/2}$ on $\cE^\infty$.
\begin{mdef}
The $\Z_2$-graded Hilbert module $\cE$ associated to the hermitian vector bundle $E\to M$ is defined as the completion of $\cE^\infty$ in the norm $\|\_\|_r$.
The module multiplication of $\FA$ on $\cE$ and the $\FA$-valued inner product $\langle\,\_\,,\,\_\,\rangle_{\cE}$ on $\cE$ are defined by extending module multiplication of $C_c^\infty(G)$ on $\cE^\infty$ and $C_c^\infty(G)$-valued inner product on $\cE^\infty$ continuously.
\end{mdef}

Again, one can perform these constructions using continuous instead of smooth sections.

If $E=\C^{I,J}\times M\to M$ is a trivial, $\Z_2$-graded bundle, then the associated $\Z_2$-graded Hilbert-$\FA$-module is $\cE=(\FA)^{I,J}.$
Its $\Z_2$-graded $C^*$-algebras of compact and adjointable operators are $\K(\cE)=\MA_{I,J}(\FA)$ and $\B(\cE)=\MA_{I,J}(\mathcal{M}(\FA))$, respectively.

An arbitrary $\Z_2$-graded vector bundle $E\to M$ may be embedded (grading preservingly) into a trivial bundle $\C^{I,J}\times M\to M$, such that $E$ is the image of a projection $p\in \MA_I(C(M))\oplus \MA_J(C(M))\subset \MA_{I,J}(C(M))$. Thus, $p$ may be seen as a projection in 
\[\B((\FA)^{I,J})=\MA_{I,J}(\mathcal{M}(\FA))\]
which we also denote by $p$. It is easy to see, that the $\Z_2$-graded Hilbert-$\FA$-module $\cE$ associated to $E$ is canonically isomorphic to the image of this projection,
$\cE\cong \im(p)\subset (\FA)^{I,J}.$
Consequently, 
\begin{equation}\label{eq:Moritainclusion}
\K(\cE)=p\MA_{I,J}(\FA)p\subset \MA_{I,J}(\FA).
\end{equation}

We will need the following faithful representation of $\K(\cE)$:
\begin{lem}\label{lem:faithfulrepresentationofcompacts}
There is a canonical isometric inclusion 
\[\pi:\,\K(\cE)\xrightarrow{\subset} \mathfrak{B}\left(\bigoplus_{x\in M}L^2(G_x,r^*E)\right)\]
with the following property:
if $T\in \K(\cE)$ is given on $C_c(G,r^*E)$ by convolution with $a\in C_c(G,r^*E\otimes s^*E^*)$, then $\pi(T)$ acts on each summand   $L^2(G_x,r^*E)$ also by convolution with $a$.
\end{lem}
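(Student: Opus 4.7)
The plan is to construct $\pi$ by amplifying the canonical faithful representation of $\FA$ from Section~\ref{sec:foliationalgebras} and then compressing by the projection that cuts out $E$ from a trivial bundle. Recall from \eqref{eq:Moritainclusion} that $E$ is realized as the image of a projection $p\in \MA_I(C(M))\oplus\MA_J(C(M))\subset\MA_{I,J}(C(M))$, so that $\cE\cong p\cdot\FA^{I,J}$ and $\K(\cE)\cong p\MA_{I,J}(\FA)p$.

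First I would extend $\pi$ entrywise to the faithful amplification
\[
\pi^{I,J}:\MA_{I,J}(\FA)\to\B\Big(\bigoplus_{x\in M}L^2(G_x)^{I,J}\Big),
\]
and identify $L^2(G_x)^{I,J}$ with $L^2(G_x,r^*(\C^{I,J}\times M))$. By Lemma~\ref{lem:FA:subofmultiplier} applied entrywise, $p$ is a self-adjoint projection in $\mathcal{M}(\MA_{I,J}(\FA))$, and the strictly-continuous multiplier extension of $\pi^{I,J}$ sends $p$ to the pointwise application of $r^*p$, as one reads off from \eqref{eq:FA:multiplierrepresentations} together with the definition of $\tau$. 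Since $(r^*p)(\gamma)$ projects $\C^{I,J}$ onto $E_{r(\gamma)}$, the image of this projection on the amplified Hilbert space is exactly $\bigoplus_{x\in M}L^2(G_x,r^*E)$.

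The representation $\pi$ is then defined as the compression of $\pi^{I,J}$ to the corner $p\MA_{I,J}(\FA)p$, acting on this image subspace. Isometricity is the standard compression argument: if $a=pap\in p\MA_{I,J}(\FA)p$ annihilates $\pi^{I,J}(p)H=\bigoplus_x L^2(G_x,r^*E)$, then it also annihilates the orthogonal complement (since $pap\cdot(\id-p)=0$), so $\pi^{I,J}(a)=0$ and hence $a=0$ by faithfulness of $\pi^{I,J}$. The main obstacle I anticipate is purely notational: keeping track of the multiplier-algebra extension of $\pi^{I,J}$ so that $p$ appearing to the left or right of an element of $\MA_{I,J}(\FA)$ corresponds unambiguously to composition with the projection on Hilbert space.

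Finally I would verify the explicit convolution formula. For $T\in p\MA_{I,J}(C_c^\infty(G))p$, the identity $r^*p\cdot T\cdot s^*p=T$ forces $T(\gamma)\in\operatorname{Hom}(E_{s(\gamma)},E_{r(\gamma)})=(r^*E\otimes s^*E^*)|_\gamma$, so $T$ is represented by a kernel $a\in C_c(G,r^*E\otimes s^*E^*)$. The amplified matrix-convolution action of $a$ on a vector-valued function $\xi\in C_c(G_x,r^*E)\subset L^2(G_x)^{I,J}$ is
\[
(\pi^{I,J}(a)\xi)(\gamma)=\int_{\gamma_1\gamma_2=\gamma}a(\gamma_1)\xi(\gamma_2),
\]
which is precisely convolution with the kernel $a$. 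The non-Hausdorff case requires no additional work, since by the convention of Section~\ref{sec:foliationalgebras} everything can be decomposed as a finite sum of pieces supported in single coordinate patches of $G$.
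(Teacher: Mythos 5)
Your proposal is correct and follows essentially the same route as the paper: the paper also defines $\pi$ by composing the corner inclusion \eqref{eq:Moritainclusion} componentwise with the canonical representation of $\FA$, uses Lemma \ref{lem:FA:subofmultiplier} to see that the image lands in $\B\bigl(\bigoplus_{x\in M}L^2(G_x,r^*E)\bigr)$, and deduces the convolution property from the analogous property of $\pi$ on $\FA$. You have merely spelled out the details (the multiplier extension sending $p$ to $r^*p$, and the faithfulness of the compression) that the paper leaves as ``straightforward to verify''.
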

\begin{proof}
Simply compose the inclusion \eqref{eq:Moritainclusion} componentwise with the representation $\pi$ of $\FA$.
Using Lemma \ref{lem:FA:subofmultiplier}, it is straightforward to verify that the image of this composition is in fact contained in 
\[\B\left(\bigoplus_{x\in M}L^2(G_x,r^*E)\right)\subset\B\left(\bigoplus_{x\in M}L^2(G_x,\C^{I,J})\right).\]

The claimed property of this representation of $\K(\cE)$ follows directly from the analogous property of the canonical representation of $\FA$.
\end{proof}

There is also a canonical isometric inclusion  
\[\tau:\,C(M)\xrightarrow{\subset} \B\left(\bigoplus_{x\in M}L^2(G_x,r^*E)\right)\]
where $g\in C(M)$ acts on each $L^2(G_x,r^*E)$ by pointwise multiplication with $r^*g$.
Completely analogous to Lemma \ref{lem:FA:subofmultiplier} we have:

\begin{lem}\label{lem:subofmultiplier}
$C(M)$ is canonically a sub-$C^*$-algebra of the multiplier algebra $\mathcal{M}(\K(\cE))=\B(\cE)$ of $\K(\cE)$.
For $T\in \K(\cE)$ and $g\in C(M)$ we have
\[
\tau(g)\pi(T)=\pi(gT),\quad \pi(T)\tau(g)=\pi(Tg).
\]
Furthermore, if $T\in \K(\cE)$ is given by convolution with $a\in C_c(G,r^*E\otimes s^*E^*)$ and $g\in C(M)$, then $r^*g\cdot a,s^*g\cdot a\in  C_c(G,r^*E\otimes s^*E^*)$, too, and $gT$ is convolution with $r^*g\cdot a$ whereas $Tg$ is convolution with $s^*g\cdot a$.
\end{lem}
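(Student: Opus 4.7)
The plan is to mimic the proof of Lemma \ref{lem:FA:subofmultiplier} as suggested by the phrase ``Completely analogous'', while exploiting the Morita-type inclusion \eqref{eq:Moritainclusion} to reduce everything to the case already handled. The key observation is that because $p\in\MA_{I,J}(C(M))$ has entries in the \emph{commutative} $C^*$-algebra $C(M)$, the diagonal copy of $C(M)$ inside $\MA_{I,J}(C(M))$ commutes with $p$.

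First I verify that $C(M)$ embeds into $\mathcal{M}(\K(\cE))=\B(\cE)$. Lemma \ref{lem:FA:subofmultiplier} gives an isometric inclusion $C(M)\hookrightarrow \mathcal{M}(\FA)$, which extends diagonally to an inclusion
\[
C(M)\hookrightarrow \MA_{I,J}(\mathcal{M}(\FA))=\mathcal{M}(\MA_{I,J}(\FA)),\qquad g\mapsto g\cdot\id_{I,J}.
\]
Since $g\cdot \id_{I,J}$ commutes with $p$ (by commutativity of $C(M)$), left and right multiplication by $g\cdot\id_{I,J}$ preserve the corner $p\MA_{I,J}(\FA)p=\K(\cE)$. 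Hence $g$ defines a multiplier of $\K(\cE)$, and the resulting map $C(M)\to\mathcal{M}(\K(\cE))$ is an isometric $*$-homomorphism.

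Next I verify the representation formulas. The representation $\pi$ of $\K(\cE)$ from Lemma \ref{lem:faithfulrepresentationofcompacts} is obtained by restricting the componentwise representation of $\MA_{I,J}(\FA)$ on $\bigoplus_{x\in M} L^2(G_x,\C^{I,J})$ to the corner $p\MA_{I,J}(\FA)p$, after noting that $p$ cuts down each $L^2(G_x,\C^{I,J})$ to $L^2(G_x,r^*E)$. Under the same identification, the representation $\tau$ of $C(M)$ on $\bigoplus_x L^2(G_x,r^*E)$ by pointwise multiplication with $r^*g$ is exactly the restriction of the diagonal componentwise representation of $\id_{I,J}\cdot g$. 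Both formulas $\tau(g)\pi(T)=\pi(gT)$ and $\pi(T)\tau(g)=\pi(Tg)$ now reduce, entry by entry, to \eqref{eq:FA:multiplierrepresentations}.

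Finally I address the convolution statement. Continuity of $g$ and compact support of $a$ immediately imply $r^*g\cdot a,\,s^*g\cdot a\in C_c(G,r^*E\otimes s^*E^*)$. If $T$ corresponds, under \eqref{eq:Moritainclusion}, to a matrix $(T_{k\ell})$ of operators given by convolution with the pointwise components $a_{k\ell}\in C_c(G)$ of $a$, then $(g\cdot\id_{I,J})\cdot(T_{k\ell})$ is the matrix $(gT_{k\ell})$; by \eqref{eq:FA:multiplierforcontfus} each $gT_{k\ell}$ is convolution with $r^*g\cdot a_{k\ell}$, so $gT$ is convolution with $r^*g\cdot a$. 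The statement for $Tg$ is symmetric. The main obstacle is purely bookkeeping---keeping track of the Morita inclusion and of the way $r^*E$ sits inside the trivial bundle $\C^{I,J}\times M$---since all the analytical content has already been packaged in Lemma \ref{lem:FA:subofmultiplier}.
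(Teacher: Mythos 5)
Your argument is correct. The paper itself offers no written proof here --- it simply declares the lemma ``completely analogous'' to Lemma \ref{lem:FA:subofmultiplier}, by which it means one should rerun the idealizer argument: check via the convolution formulas that $\tau(g)\pi(T)$ and $\pi(T)\tau(g)$ again lie in $\pi(\K(\cE))$, so that the image of $\tau$ sits in the largest subalgebra of $\B\bigl(\bigoplus_{x\in M}L^2(G_x,r^*E)\bigr)$ containing $\pi(\K(\cE))$ as an ideal. You instead bootstrap from the already-proven scalar case through the corner inclusion $\K(\cE)=p\MA_{I,J}(\FA)p$ of \eqref{eq:Moritainclusion}, with the single new ingredient that $g\cdot\id_{I,J}$ commutes with $p$ because $C(M)$ is commutative. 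This is a legitimate and arguably cleaner route: it avoids redoing any analysis, and it is consistent with how the paper itself proves Lemma \ref{lem:faithfulrepresentationofcompacts} (composing the corner inclusion with the scalar representation). The direct route has the minor advantage of not referring to the chosen embedding $E\subset\C^{I,J}\times M$. One small caveat applying equally to your write-up and to the paper's statement: the claim that the map $C(M)\to\mathcal{M}(\K(\cE))$ is \emph{isometric} (i.e.\ that $C(M)$ is a sub-$C^*$-algebra rather than a quotient) implicitly uses that $E$ is nowhere zero-dimensional; otherwise functions supported where $E$ vanishes act trivially. When $E$ is nowhere zero this follows from the explicit description of $\tau(g)$ as multiplication by $r^*g$ on each $L^2(G_x,r^*E)$, and it would be worth one sentence to record that.
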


\section{The asymptotic category and $E$-theory}\label{sec:Etheory}
This section is a brief summary of the basic definitions and properties of the asymptotic category and $E$-theory.
We use the picture of $E$-theory presented in \cite{HigGue}. 
A more detailed exposition of $E$-theory, which is based on a slightly different definition, is found in \cite{GueHigTro}.

\begin{mdef}[({\cite[Definition 2.2]{HigGue},\cite[Definition 1.1]{GueHigTro}})]
Let $B$ be a $\Z_2$-graded $C^*$-algebra. The \emph{asymptotic $C^*$-algebra of $B$} is 
\[\mathfrak{A}(B):=C_b([1,\infty),B)/C_0([1,\infty),B).\]
$\mathfrak{A}$ is a functor from the category of $\Z_2$-graded $C^*$-algebras into itself.

An \emph{asymptotic morphism} is a graded $*$-homomorphism $A\to \mathfrak{A}(B)$.
\end{mdef}

\begin{mdef}[({\cite[Definition 2.3]{HigGue},\cite[Definition 2.2]{GueHigTro}})]
Let $A,B$ be  $\Z_2$-graded $C^*$-algebras.
The \emph{asymptotic functors} $\mathfrak{A}^0,\mathfrak{A}^1,\dots$ are defined by $\mathfrak{A}^0(B)=B$ and
\[\mathfrak{A}^n(B)=\mathfrak{A}(\mathfrak{A}^{n-1}(B)).\]
Two $*$-homomorphisms $\phi^0,\phi^1:A\to\mathfrak{A}^n(B)$ are $n$-homotopic if there exists a $*$-homomorphism
$\Phi:A\to\mathfrak{A}^n(B[0,1])$, called $n$-homotopy between $\phi^0,\phi^1$, from which the $*$-homomorphisms $\phi^0,\phi^1$ are recovered as the compositions
\[A\xrightarrow{\Phi}\mathfrak{A}^n(B[0,1]) \xrightarrow{\text{evaluation at }0,1}\mathfrak{A}^n(B).\]
\end{mdef}

\begin{lem}[({\cite[Proposition 2.3]{GueHigTro}})]
The relation of $n$-homotopy is an equivalence relation on the set of $*$-homomorphisms from $A$ to $\mathfrak{A}^n(B)$.
\end{lem}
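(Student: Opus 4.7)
The plan is to verify reflexivity, symmetry, and transitivity in turn. For reflexivity, I would compose any $*$-homomorphism $\phi: A \to \mathfrak{A}^n(B)$ with $\mathfrak{A}^n(\iota)$, where $\iota: B \to B[0,1]$ sends $b$ to the constant function $t \mapsto b$; the resulting $*$-homomorphism $A \to \mathfrak{A}^n(B[0,1])$ is an $n$-homotopy from $\phi$ to itself, since evaluations at $0$ and at $1$ both recover $\phi$. Symmetry is equally formal: if $\Phi: A \to \mathfrak{A}^n(B[0,1])$ is an $n$-homotopy from $\phi^0$ to $\phi^1$, then $\mathfrak{A}^n(\sigma) \circ \Phi$ is an $n$-homotopy from $\phi^1$ to $\phi^0$, where $\sigma: B[0,1] \to B[0,1]$ is the $*$-automorphism induced by the reparametrization $t \mapsto 1-t$.

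The essential content lies in proving transitivity. Given $n$-homotopies $\Phi_1, \Phi_2: A \to \mathfrak{A}^n(B[0,1])$ from $\phi^0$ to $\phi^1$ and from $\phi^1$ to $\phi^2$ respectively, my plan is to concatenate them. Using the affine reparametrizations $[0,1] \to [0, 1/2]$ and $[0, 1] \to [1/2, 1]$, I would first view $\Phi_1$ and $\Phi_2$ as $*$-homomorphisms into $\mathfrak{A}^n(C([0, 1/2], B))$ and $\mathfrak{A}^n(C([1/2, 1], B))$ respectively, whose evaluations at $t = 1/2$ both agree with $\phi^1$ in $\mathfrak{A}^n(B)$. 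Since $C([0, 1], B)$ is the pullback $C([0, 1/2], B) \times_B C([1/2, 1], B)$ along evaluation at $1/2$, it would suffice to glue these to a $*$-homomorphism into $\mathfrak{A}^n(C([0, 1], B))$.

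The main obstacle is that $\mathfrak{A}^n$ does not preserve this pullback on the nose: the evaluations at $t = 1/2$ coincide only in the quotient $\mathfrak{A}^n(B)$, so individual representatives need not match. I would address this first at $n = 1$ by choosing set-theoretic bounded continuous lifts $F_i^a : [1,\infty) \times [0,1] \to B$ of $\Phi_i(a)$ for each $a \in A$, and then defining a glued lift $F^a$ that uses a reparametrization of $F_1^a$ on $[0, 1/2 - \varepsilon(s)]$, a reparametrization of $F_2^a$ on $[1/2 + \varepsilon(s), 1]$, and linearly interpolates between the two on the shrinking interval in between, with $\varepsilon(s) \to 0$ as $s \to \infty$. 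Joint continuity in $(s,t)$ is routine, and because the linear interpolation is compatible with the $*$-algebraic operations of $B$, the assignment $a \mapsto [F^a]$ descends to a well-defined $*$-homomorphism into $\mathfrak{A}(B[0,1])$ whose evaluations at $t = 0, 1$ recover $\phi^0$ and $\phi^2$. For general $n$, I would apply exactly the same interpolation at the outermost $\mathfrak{A}$ in $\mathfrak{A}^n = \mathfrak{A} \circ \mathfrak{A}^{n-1}$, now with coefficients in $\mathfrak{A}^{n-1}(B)$ in place of $B$. Verifying that the interpolated lift truly yields a $*$-homomorphism modulo $C_0$ (rather than merely a set map) is the technical point that will need the most care.
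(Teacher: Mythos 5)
The paper offers no proof of this lemma --- it is quoted verbatim from Guentner--Higson--Trout --- so your argument has to be measured against the proof in that reference. Your reflexivity and symmetry steps are fine, and your $n=1$ transitivity argument is essentially correct, with one caveat: the justification ``linear interpolation is compatible with the $*$-algebraic operations'' is false for multiplication, since $((1-u)x_a+uy_a)((1-u)x_b+uy_b)$ is not $(1-u)x_ax_b+uy_ay_b$. What actually saves multiplicativity of $a\mapsto[F^a]$ is that the two seam values $F_1^a(s,1)$ and $F_2^a(s,0)$ agree modulo $C_0([1,\infty),B)$, so on the shrinking interpolation zone $F^a$ is asymptotically the constant $F_1^a(s,1)$; you flagged this as the delicate point, and it does go through.

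The genuine gap is the case $n\geq 2$. You propose to ``apply exactly the same interpolation at the outermost $\mathfrak{A}$, now with coefficients in $\mathfrak{A}^{n-1}(B)$ in place of $B$'', but the algebra in which the homotopies live is $\mathfrak{A}^n(B[0,1])=\mathfrak{A}\bigl(\mathfrak{A}^{n-1}(B[0,1])\bigr)$, not $\mathfrak{A}\bigl(\mathfrak{A}^{n-1}(B)[0,1]\bigr)$, and these are genuinely different: there is not even a natural $*$-homomorphism $\mathfrak{A}(B[0,1])\to\mathfrak{A}(B)[0,1]$, because for $f\in C_b([1,\infty),B[0,1])$ the family $t\mapsto \overline{f(\cdot,t)}$ need not be norm-continuous in $\mathfrak{A}(B)$ (take $B=\C$ and $f(s,t)=e^{ist}$). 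A lift of $\Phi_i(a)$ at the outer level is therefore a bounded continuous function of $s$ with values in $\mathfrak{A}^{n-1}(B[0,1])$, whose values are not functions of $t\in[0,1]$ that you can restrict, reparametrize and interpolate; concatenating them is exactly the original problem one level down. The standard repair, and the route taken in the cited reference, is to recast concatenation as the assertion that the natural map from $\mathfrak{A}^n(P)$, where $P$ is the pullback $\{(f,g)\in B[0,1]\oplus B[0,1]: f(1)=g(0)\}\cong B[0,1]$, onto the pullback of $\mathfrak{A}^n(B[0,1])$ with itself over $\mathfrak{A}^n(B)$ is surjective, and to prove by induction on $n$ that $\mathfrak{A}^n$ carries a pullback of $C^*$-algebras with one surjective leg onto the corresponding pullback; the base case is the $C_0$-correction underlying your interpolation, and the inductive step uses that $\mathfrak{A}$ preserves surjections. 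Without this scaffolding your argument does not cover $n\geq 2$.
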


\begin{mdef}[({\cite[Definition 2.4]{HigGue},\cite[Definition 2.6]{GueHigTro}})]
Let $A,B$  be  $\Z_2$-graded $C^*$-algebras. Denote by $\llbracket A,B\rrbracket_n$ the set of $n$-homotopy classes of $*$-homomorphisms  from $A$ to $\mathfrak{A}^n(B)$.
\end{mdef}

There are two natural transformations $\mathfrak{A}^n\to\mathfrak{A}^{n+1}$:
The first is defined by including $\mathfrak{A}^n(B)$ into $\mathfrak{A}^{n+1}(B)=\mathfrak{A}(\mathfrak{A}^n(B))$ as constant functions.
The second is defined by applying the functor $\mathfrak{A}^n$ to the inclusion of $B$ into $\mathfrak{A}B$ as constant functions.
Both of them define maps 
$\llbracket A,B\rrbracket_n\to\llbracket A,B\rrbracket_{n+1}.$
\begin{lem}[({\cite[Proposition 2.8]{GueHigTro}})]
The above natural transformations define the same map $\llbracket A,B\rrbracket_n\to\llbracket A,B\rrbracket_{n+1}.$
\end{lem}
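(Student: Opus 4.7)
The plan is to prove that the two natural transformations $\iota_1,\iota_2: \mathfrak{A}^n\Rightarrow\mathfrak{A}^{n+1}$ agree after composing with any $*$-homomorphism $\phi:A\to\mathfrak{A}^n(B)$, up to $(n+1)$-homotopy. This will imply they induce the same map on $\llbracket A,B\rrbracket_n\to\llbracket A,B\rrbracket_{n+1}$. The key is to construct an explicit $(n+1)$-homotopy that interpolates the two outermost asymptotic parameters.

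I begin by unpacking the two maps in the base case $n = 1$. Picking a set-theoretic lift $\hat\phi:A\to C_b([1,\infty),B)$ of $\phi$ and writing $\hat\phi(a)(t)=\phi_t(a)$, the family $(\phi_t)_{t\geq 1}$ is asymptotically a graded $*$-homomorphism. A direct computation from the definitions shows that $\iota_1\phi$ and $\iota_2\phi$ are represented in $\mathfrak{A}^2(B)=\mathfrak{A}(\mathfrak{A}(B))$ by the two-parameter families
\[
(s,t)\mapsto\phi_t(a)\qquad\text{and}\qquad(s,t)\mapsto\phi_s(a)
\]
respectively --- the first ignoring the outer parameter $s$, the second the inner one $t$. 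I then define the interpolation
\[
\Phi(a)(s,t)(u)\;:=\;\phi_{(1-u)s+ut}(a),\qquad s,t\in[1,\infty),\ u\in[0,1],
\]
whose subscript always lies in $[1,\infty)$. Evaluation at $u=0$ and $u=1$ reproduces $\iota_2\phi$ and $\iota_1\phi$. For general $n$ the same interpolation applied to the two outermost parameter slots (leaving the remaining $n-1$ slots untouched) yields a candidate $(n+1)$-homotopy; alternatively one reduces to $n=1$ by applying $\mathfrak{A}^{n-1}$ and invoking its compatibility with homotopies.

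The main technical point, which I expect to be the principal obstacle, is to verify that $\Phi$ descends to a graded $*$-homomorphism $A\to\mathfrak{A}^{n+1}(B[0,1])$. The crucial observation is: a bounded continuous function on $[1,\infty)^2$ whose norm tends to $0$ uniformly as $\min(s,t)\to\infty$ represents zero in $\mathfrak{A}^2(\cdot)$, because quotienting first in the inner variable turns it into a $C_0$-valued map of the outer variable into $\mathfrak{A}(\cdot)$, which is then killed by the outer quotient. Since $(1-u)s+ut\geq\min(s,t)$ uniformly in $u\in[0,1]$, every asymptotic error of the family $\phi_\bullet$ --- failure of multiplicativity, linearity, $*$-compatibility, or a change of lift --- pulls back under the reparametrization to a function that vanishes uniformly in $u$ as $\min(s,t)\to\infty$, and hence is killed in $\mathfrak{A}^{n+1}(B[0,1])$. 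Once this uniform-vanishing bookkeeping is in place, $\Phi$ is a bona fide $(n+1)$-homotopy from $\iota_2\phi$ to $\iota_1\phi$, and the lemma follows.
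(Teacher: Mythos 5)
Your identification of the two compositions as $(s,t)\mapsto\phi_t(a)$ and $(s,t)\mapsto\phi_s(a)$ is correct, and so is the observation that a family whose norm tends to $0$ uniformly as $\min(s,t)\to\infty$ dies in $\mathfrak{A}^2$. The gap lies one step earlier: the formula $\Phi(a)(s)(t)(u)=\phi_{(1-u)s+ut}(a)$ need not define an element of $\mathfrak{A}^2(B[0,1])=\mathfrak{A}(\mathfrak{A}(B[0,1]))$ at all, because $s\mapsto\overline{t\mapsto[u\mapsto\phi_{(1-u)s+ut}(a)]}$ need not be \emph{continuous} into $\mathfrak{A}(B[0,1])$. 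Its modulus of continuity at $s$ is governed by $\limsup_{t\to\infty}\sup_{u}\|\phi_{(1-u)s+ut}(a)-\phi_{(1-u)s'+ut}(a)\|$, where the two subscripts differ by $(1-u)(s-s')$ at locations near $t$; controlling this requires \emph{uniform} continuity of $t\mapsto\phi_t(a)$ on all of $[1,\infty)$, which cannot be arranged. For instance, with $A=B=M_2(\C)$ and $\phi_t=\operatorname{Ad}(e^{it^2X})$ for a fixed self-adjoint $X$, each $\phi_t$ is an honest automorphism, yet for any $s\neq s'$ one can choose $u$ near $1$ (depending on $t$) so that the conjugating phases differ by $\pi$, and the displayed $\limsup$ is bounded below independently of $|s-s'|$. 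This is also why there is no canonical map $C_b([1,\infty)^2,C)\to\mathfrak{A}^2(C)$, so arguing with two-parameter families already presupposes the point at issue. (Your treatment of general $n$ is likewise too quick: the two new variables occupy the outermost and innermost slots, not adjacent ones.)

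Note that the paper does not prove this lemma but cites \cite[Proposition 2.8]{GueHigTro}; however, it meets exactly your interpolation in the proof of Theorem \ref{thm:associativityproof}, where the continuity of $t\mapsto\overline{s\mapsto[r\mapsto g_{rs+(1-r)t}]}$ is singled out as the only non-trivial point and is rescued by the uniform continuity of vanishing-variation functions --- a resource unavailable for an arbitrary lift $(\phi_t)$. A repair in the present generality is to replace the convex combination by $\phi_{\max(s,ut)}(a)$: at $u=0$ this is $\phi_s(a)$; at $u=1$ the class of $t\mapsto\phi_{\max(s,t)}(a)$ equals $\overline{t\mapsto\phi_t(a)}$ for every $s$ (they agree for $t\geq s$), so the constant inclusion is recovered on the nose; outer continuity now holds because the values at $s$ and $s'$ coincide once $ut\geq\max(s,s')$, so the discrepancy is controlled by the oscillation of $\phi_\cdot(a)$ on the compact interval between $s$ and $s'$, i.e.\ by plain pointwise continuity; and the asymptotic error at parameter $\max(s,ut)\geq s$ is bounded by its supremum over $[s,\infty)$, which tends to $0$. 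With that substitution your bookkeeping goes through for $n=1$.
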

These maps organize the sets $\llbracket A,B\rrbracket_n$ into a directed system
\[\llbracket A,B\rrbracket_0\to\llbracket A,B\rrbracket_1\to\llbracket A,B\rrbracket_2\to\dots\]
\begin{mdef}[({\cite[Definition 2.5]{HigGue},\cite[Definition 2.7]{GueHigTro}})]
Let  $A,B$  be  $\Z_2$-graded $C^*$-algebras. Denote by $\llbracket A,B\rrbracket_\infty$ the direct limit of the above directed system.
We denote the class of a $*$-homomorphism $\phi:A\to\mathfrak{A}^n(B)$ by $\llbracket\phi\rrbracket$.
\end{mdef}

\begin{prop}[({\cite[Proposition 2.12]{GueHigTro}})]\label{prop:asymptoticcomposition}
Let $\phi:A\to\mathfrak{A}^n(B)$ and $\psi:B\to\mathfrak{A}^m(C)$ be $*$-homomorphisms. The class $\llbracket\psi\rrbracket\circ\llbracket\phi\rrbracket\in \llbracket A,C\rrbracket_\infty$ of the composite $*$-homomorphism
\[A\xrightarrow{\phi}\mathfrak{A}^n(B) \xrightarrow{\mathfrak{A}^n(\psi)} \mathfrak{A}^{n+m}(C)\]
depends only on the classes $\llbracket\phi\rrbracket\in\llbracket A,B\rrbracket_\infty$, $\llbracket\psi\rrbracket\in\llbracket B,C\rrbracket_\infty$ of $\phi,\psi$. The composition law
\[\llbracket A,B\rrbracket_\infty\times\llbracket B,C\rrbracket_\infty\to \llbracket A,C\rrbracket_\infty,\quad(\llbracket\phi\rrbracket,\llbracket\psi\rrbracket)\mapsto \llbracket\psi\rrbracket\circ\llbracket\phi\rrbracket\]
so defined is associative.
\end{prop}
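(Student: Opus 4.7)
The plan is to prove three things: (i) the $*$-homomorphism $\mathfrak{A}^n(\psi) \circ \phi : A \to \mathfrak{A}^{n+m}(C)$ respects $n$-homotopy in $\phi$ and $m$-homotopy in $\psi$; (ii) it descends through the two natural transformations $\mathfrak{A}^n \to \mathfrak{A}^{n+1}$, so that the induced operation on representatives passes to the direct limits $\llbracket A,B\rrbracket_\infty$ and $\llbracket B,C\rrbracket_\infty$; (iii) the resulting composition law is associative.

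For (i), suppose $\phi^0, \phi^1 : A \to \mathfrak{A}^n(B)$ are $n$-homotopic via $\Phi : A \to \mathfrak{A}^n(B[0,1])$, with evaluation maps $\mathrm{ev}_i : B[0,1] \to B$ satisfying $\phi^i = \mathfrak{A}^n(\mathrm{ev}_i) \circ \Phi$. The key ingredient is a canonical $*$-homomorphism $\tilde\psi : B[0,1] \to \mathfrak{A}^m(C[0,1])$ extending $\psi$ and compatible with evaluation at $0,1$. This is obtained from the natural $*$-homomorphism $\mathfrak{A}^m(C)[0,1] \to \mathfrak{A}^m(C[0,1])$ that interchanges the homotopy parameter $t\in[0,1]$ with the asymptotic parameters $s\in[1,\infty)^m$. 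Applying the functor $\mathfrak{A}^n$ produces $\mathfrak{A}^n(\tilde\psi) : \mathfrak{A}^n(B[0,1]) \to \mathfrak{A}^{n+m}(C[0,1])$, and the composition $\mathfrak{A}^n(\tilde\psi) \circ \Phi$ is the required $(n+m)$-homotopy between $\mathfrak{A}^n(\psi) \circ \phi^0$ and $\mathfrak{A}^n(\psi) \circ \phi^1$. Invariance under $m$-homotopies of $\psi$ is simpler: given $\Psi : B \to \mathfrak{A}^m(C[0,1])$ realising $\psi^0 \sim_m \psi^1$, the $*$-homomorphism $\mathfrak{A}^n(\Psi) \circ \phi : A \to \mathfrak{A}^{n+m}(C[0,1])$ is the desired $(n+m)$-homotopy. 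For (ii), both the outer and the inner constant-inclusion natural transformations $\mathfrak{A}^n \to \mathfrak{A}^{n+1}$ commute with the composition up to $(n+m+1)$-homotopy; by the preceding lemma they agree on classes, so one verification suffices.

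Associativity (iii) is essentially formal: for $\phi : A \to \mathfrak{A}^n(B)$, $\psi : B \to \mathfrak{A}^m(C)$ and $\chi : C \to \mathfrak{A}^p(D)$, both associations yield the same $*$-homomorphism
\[ A \xrightarrow{\phi} \mathfrak{A}^n(B) \xrightarrow{\mathfrak{A}^n(\psi)} \mathfrak{A}^{n+m}(C) \xrightarrow{\mathfrak{A}^{n+m}(\chi)} \mathfrak{A}^{n+m+p}(D), \]
using only functoriality of $\mathfrak{A}^n$ applied to the identity $\mathfrak{A}^n(\mathfrak{A}^m(\chi)\circ\psi) = \mathfrak{A}^{n+m}(\chi)\circ \mathfrak{A}^n(\psi)$. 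Hence the classes coincide in $\llbracket A,D\rrbracket_\infty$.

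The main technical obstacle is establishing the natural $*$-homomorphism $\mathfrak{A}^m(C)[0,1] \to \mathfrak{A}^m(C[0,1])$ used to construct $\tilde\psi$. The naive coordinate-swap fails pointwise because a continuous path $[0,1]\to\mathfrak{A}^m(C)$ need not lift to a continuous path $[0,1]\to C_b([1,\infty)^m,C)$. The standard workaround is to approximate such a path uniformly by piecewise-constant functions $[0,1]\to\mathfrak{A}^m(C)$, for which the interchange of parameters is tautological, and pass to the limit in $\mathfrak{A}^m(C[0,1])$; alternatively one may use the fact that $\mathfrak{A}$ is continuous with respect to tensoring by $C[0,1]$. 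Once this natural transformation is available, the remainder of the proof is a routine diagram chase.
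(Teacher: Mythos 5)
The paper itself offers no proof of this proposition --- it is quoted directly from Guentner--Higson--Trout --- so your argument can only be measured against theirs. Your overall architecture is the standard one and is sound: (i) invariance of $\mathfrak{A}^n(\psi)\circ\phi$ under $n$-homotopy of $\phi$ and $m$-homotopy of $\psi$, the latter being immediate from functoriality of $\mathfrak{A}^n$; (ii) compatibility with the stabilization maps $\llbracket A,B\rrbracket_n\to\llbracket A,B\rrbracket_{n+1}$, using the quoted fact that the two natural transformations agree on classes (note that stabilizing $\phi$ or $\psi$ produces an \emph{intermediate} insertion $\mathfrak{A}^k(\iota_{\mathfrak{A}^{n+m-k}(C)})$ of the composite, so you implicitly also need that all such insertions agree on classes; this follows by applying $\mathfrak{A}^k$ to the relevant homotopies); (iii) strict equality of representatives for associativity via $\mathfrak{A}^n(\mathfrak{A}^m(\chi)\circ\psi)=\mathfrak{A}^{n+m}(\chi)\circ\mathfrak{A}^n(\psi)$. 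Parts (ii) and (iii) are correct as written.

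The one genuine weak point is your construction of $\tilde\psi:B[0,1]\to\mathfrak{A}^m(C[0,1])$. The piecewise-constant approximation does not work as stated: a piecewise-constant path $[0,1]\to\mathfrak{A}^m(C)$ is not an element of $\mathfrak{A}^m(C)[0,1]$ unless it is constant, its ``tautological interchange'' $t\mapsto(r\mapsto g_{i(r)}(t))$ fails to be continuous in $r$ and so does not lie in $C_b([1,\infty),C[0,1])$, and even after smoothing with a partition of unity the passage to the limit is obstructed because the comparison map $\mathfrak{A}^m(C[0,1])\to\mathfrak{A}^m(C)[0,1]$ is not isometric (a bump wandering in $r$ as $t\to\infty$ is killed by it), so Cauchyness of the approximants cannot be read off from their images. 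The correct route is the one you mention only in passing: since $C[0,1]$ is nuclear, $\mathfrak{A}^m(C)[0,1]\cong C[0,1]\otimes\mathfrak{A}^m(C)$ (maximal tensor product), and the canonical $*$-homomorphisms $D\otimes\mathfrak{A}(B)\to\mathfrak{A}(D\otimes B)$ recalled in Section \ref{sec:Etheory} --- which exist by the universal property of the maximal tensor product and iterate to $D\otimes\mathfrak{A}^m(B)\to\mathfrak{A}^m(D\otimes B)$ --- supply the interchange $\mathfrak{A}^m(C)[0,1]\to\mathfrak{A}^m(C[0,1])$, compatibly with evaluation at the endpoints. With that substitution your proof is complete, and it is then essentially the argument of Guentner--Higson--Trout.
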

For example, if $n=m=1$ and $\phi,\psi$ lift to continuous maps 
\begin{align*}
\tilde\phi:A&\to C_b([1,\infty),B),&a&\mapsto[t\mapsto\tilde\phi_t(a)],
\\\tilde\psi:B&\to C_b([1,\infty),C),&b&\mapsto[s\mapsto\tilde\psi_s(b)],
\end{align*}
respectively, then $\llbracket\psi\rrbracket\circ\llbracket\phi\rrbracket$ is represented by
\[A\to\mathfrak{A}^2(C),\quad a\mapsto\overline{t\mapsto\overline{s\mapsto \tilde\psi_s(\tilde\phi_t(a))}},\]
where the overline denotes equivalence classes.
We will make use of this formula a few times later on.

According to the proposition, we obtain a category:
\begin{mdef}[({\cite[Definition 2.6]{HigGue},\cite[Definition 2.13]{GueHigTro}})]
The \emph{asymptotic category} is the category whose objects are $\Z_2$-graded $C^*$-algebras, whose morphisms are elements of the sets $\llbracket A,B\rrbracket_\infty$, and whose composition law is defined in Proposition \ref{prop:asymptoticcomposition}.
\end{mdef}
The identity morphism $1_A\in \llbracket A,A\rrbracket_\infty$ is represented by the identity $\id_A:A\to A=\mathfrak{A}^0(A)$.

For arbitrary $\Z_2$-graded $C^*$-algebras $B,D$, there are canonical asymptotic morphisms
\begin{align*}
\mathfrak{A}(B)\hatotimes  D&\to \mathfrak{A}(B\hatotimes  D)&\bar g\hatotimes  d&\mapsto\overline{t\mapsto g(t)\hatotimes  d}
\\D\hatotimes \mathfrak{A}(B)&\to \mathfrak{A}(D\hatotimes  B)&d\hatotimes \bar g&\mapsto\overline{t\mapsto d\hatotimes  g(t)}
\end{align*}
and inductively also canonical $*$-homomorphisms $\mathfrak{A}^n(B)\hatotimes  D\to \mathfrak{A}^n(B\hatotimes  D)$, $D\hatotimes \mathfrak{A}^n(B)\to \mathfrak{A}^n(D\hatotimes  B)$. This is a consequence of {\cite[Lemmas 4.1, 4.2 \& Chapter 3]{GueHigTro}.

\begin{prop}[({\cite[Theorem 4.6]{GueHigTro}})]
The asymptotic category is a mo\-no\-idal category with respect to the \emph{maximal graded tensor product} $\hatotimes $ of $C^*$-algebras and a tensor product on the morphism sets,
\[\hatotimes :\llbracket A_1,B_1\rrbracket_\infty\times\llbracket A_2,B_2\rrbracket_\infty\to\llbracket A_1\hatotimes  A_2,B_1\hatotimes  B_2\rrbracket_\infty,\]
with the following property:
If 
$\llbracket\phi\rrbracket\in \llbracket A_1,B_1\rrbracket_\infty$ and $\llbracket\psi\rrbracket\in \llbracket A_2,B_2\rrbracket_\infty$
are represented by $\phi:A_1\to\mathfrak{A}^m(B_1)$ and $\psi:A_2\to\mathfrak{A}^n(B_2)$ , respectively, and $D$ is another $\Z_2$-graded $C^*$-algebra, then 
\begin{align*}
\llbracket\phi\rrbracket\hatotimes 1_D&\in \llbracket A_1\hatotimes  D,B_1\hatotimes  D\rrbracket_\infty,
\\1_D\hatotimes  \llbracket\psi\rrbracket&\in \llbracket D\hatotimes  A_2,D\hatotimes  B_2\rrbracket_\infty
\end{align*}
are represented by the compositions
\begin{align*}
A_1\hatotimes  D&\xrightarrow{\phi\hatotimes \id_D}\mathfrak{A}^m(B_1)\hatotimes  D\to\mathfrak{A}^m(B_1\hatotimes  D),
\\D\hatotimes  A_2&\xrightarrow{\id_D\hatotimes \psi}D\hatotimes \mathfrak{A}^n(B_2)\to\mathfrak{A}^n(D\hatotimes  B_2),
\end{align*}
respectively. 
\end{prop}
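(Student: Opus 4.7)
The plan is to construct the tensor product on morphism sets by combining the universal property of the maximal graded tensor product with the canonical maps $\mathfrak{A}^n(B)\hatotimes D\to\mathfrak{A}^n(B\hatotimes D)$ already supplied by \cite[Lemmas 4.1, 4.2 \& Chapter 3]{GueHigTro}, then verify that the assignment descends to $n$-homotopy classes and is compatible with composition.

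First, given representatives $\phi:A_1\to\mathfrak{A}^m(B_1)$ and $\psi:A_2\to\mathfrak{A}^n(B_2)$, I would define $\phi\hatotimes\psi$ as the composition
\[A_1\hatotimes A_2\xrightarrow{\phi\hatotimes\psi}\mathfrak{A}^m(B_1)\hatotimes\mathfrak{A}^n(B_2)\to\mathfrak{A}^m(B_1\hatotimes\mathfrak{A}^n(B_2))\to\mathfrak{A}^{m+n}(B_1\hatotimes B_2),\]
where the first arrow uses universality of the maximal graded tensor product, the second is the canonical map with $D=\mathfrak{A}^n(B_2)$, and the third is $\mathfrak{A}^m$ applied to the canonical map $B_1\hatotimes\mathfrak{A}^n(B_2)\to\mathfrak{A}^n(B_1\hatotimes B_2)$ iterated $n$ times from $B=B_2$, $D=B_1$. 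Equivalently, $\phi\hatotimes\psi$ factors as $(\phi\hatotimes 1_{A_2})\circ(1_{B_1}\hatotimes\psi)$ at the level of representatives, which immediately yields the claimed descriptions of $\llbracket\phi\rrbracket\hatotimes 1_D$ and $1_D\hatotimes\llbracket\psi\rrbracket$ by taking one of the two factors to be the identity.

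Second, I would show well-definedness on $n$-homotopy classes. If $\Phi:A_1\to\mathfrak{A}^m(B_1[0,1])$ is an $m$-homotopy between $\phi^0,\phi^1$, then applying the same recipe with $B_1$ replaced by $B_1[0,1]$ produces an $(m+n)$-homotopy between $\phi^0\hatotimes\psi$ and $\phi^1\hatotimes\psi$ in $\mathfrak{A}^{m+n}((B_1\hatotimes B_2)[0,1])$, using naturality of the canonical maps. The analogous argument handles homotopies in the second variable. Compatibility with the two maps $\mathfrak{A}^k\to\mathfrak{A}^{k+1}$ defining the direct system is checked by inspection, so the construction passes to $\llbracket\,\cdot\,,\,\cdot\,\rrbracket_\infty$.

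Third, bilinearity under composition (the interchange law $(\llbracket\phi'\rrbracket\circ\llbracket\phi\rrbracket)\hatotimes(\llbracket\psi'\rrbracket\circ\llbracket\psi\rrbracket)=(\llbracket\phi'\rrbracket\hatotimes\llbracket\psi'\rrbracket)\circ(\llbracket\phi\rrbracket\hatotimes\llbracket\psi\rrbracket)$) and associativity of $\hatotimes$ reduce, via Proposition \ref{prop:asymptoticcomposition}, to verifying the corresponding equalities at the level of representatives. Both hinge on the commutativity of the diagrams
\[\xymatrix{
\mathfrak{A}^m(B)\hatotimes\mathfrak{A}^n(D)\ar[r]\ar[d] & \mathfrak{A}^m(B\hatotimes\mathfrak{A}^n(D))\ar[d] \\
\mathfrak{A}^n(\mathfrak{A}^m(B)\hatotimes D)\ar[r] & \mathfrak{A}^{m+n}(B\hatotimes D)
}\]
expressing interchange of the two asymptotic parameters, which is exactly the content of the referenced lemmas of \cite{GueHigTro}. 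Unitality with respect to $\C$ is immediate since the canonical maps reduce to identities when one tensor factor is $\C$.

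The main technical obstacle, and the step I would dwell on most, is the interchange check in the third paragraph: one must verify that the two natural ways of pushing multiple asymptotic parameters past a tensor factor agree up to $(m+n)$-homotopy, not strictly. This is what forces the use of the direct limit $\llbracket A,B\rrbracket_\infty$ and the maximal tensor product, since continuity and norm control of the asymptotic bracket $\overline{t\mapsto\overline{s\mapsto\cdot}}$ under tensoring by arbitrary elements rely on maximality.
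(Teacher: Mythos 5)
The paper does not actually prove this proposition: it is imported wholesale from \cite[Theorem 4.6]{GueHigTro}, so there is no internal argument to compare against. Your sketch is, in effect, a reconstruction of the Guentner--Higson--Trout proof, and it follows the right path: define $\phi\hatotimes\psi$ via the canonical maps $\mathfrak{A}^m(B_1)\hatotimes\mathfrak{A}^n(B_2)\to\mathfrak{A}^m(B_1\hatotimes\mathfrak{A}^n(B_2))\to\mathfrak{A}^{m+n}(B_1\hatotimes B_2)$, observe that the stated descriptions of $\llbracket\phi\rrbracket\hatotimes 1_D$ and $1_D\hatotimes\llbracket\psi\rrbracket$ then hold by construction (taking $n=0$ resp.\ $m=0$), and isolate the genuine technical content in the interchange of asymptotic parameters, which holds only up to homotopy and is exactly what forces the passage to $\llbracket\cdot,\cdot\rrbracket_\infty$ and the use of the maximal tensor product. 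Your identification of where maximality enters (the universal property making $\mathfrak{A}(B)\hatotimes D\to\mathfrak{A}(B\hatotimes D)$ well defined) is also the correct diagnosis.

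Two small corrections. First, the factorization you state at the end of your first paragraph, $(\phi\hatotimes 1_{A_2})\circ(1_{B_1}\hatotimes\psi)$, does not typecheck: the composable orderings are $(1_{B_1}\hatotimes\llbracket\psi\rrbracket)\circ(\llbracket\phi\rrbracket\hatotimes 1_{A_2})$ and $(\llbracket\phi\rrbracket\hatotimes 1_{B_2})\circ(1_{A_1}\hatotimes\llbracket\psi\rrbracket)$, as in the displayed formula following the proposition; the fact that these two agree is precisely the interchange statement you defer to the third paragraph, so it should not be asserted ``at the level of representatives'' before that homotopy is produced. Second, in your commuting square the bottom-right object arises once as $\mathfrak{A}^m(\mathfrak{A}^n(\,\cdot\,))$ and once as $\mathfrak{A}^n(\mathfrak{A}^m(\,\cdot\,))$; identifying these is itself part of the interchange homotopy rather than an equality of $C^*$-algebras, so the square commutes only in $\llbracket\cdot,\cdot\rrbracket_\infty$. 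You flag this caveat in prose, which is the right instinct, but the diagram as drawn overstates what is literally true.
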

The general form of the tensor product is of course
\[\llbracket\phi\rrbracket\hatotimes \llbracket\psi\rrbracket=(\llbracket\phi\rrbracket\hatotimes 1_{B_2})\circ(1_{A_1}\hatotimes \llbracket\psi\rrbracket)=(1_{B_1}\hatotimes \llbracket\psi\rrbracket)\circ(\llbracket\phi\rrbracket\hatotimes 1_{A_2}).
\]

There is an obvious monoidal functor from the category of $\Z_2$-graded $C^*$-algebras into the asymptotic category which is the identity on the objects and 
maps a $*$-homomorphism $A\to B$ to its class in $\llbracket A,B\rrbracket_\infty$ by considering it as a $*$-homomorphism $A\to \mathfrak{A}^0(B)$.

The definition of $E$-theory involves the following two $\Z_2$-graded $C^*$-algebas. The first is $\widehat\K=\B(\widehat\ell^2)=\MA_{1,1}(\K)$ -- the $\Z_2$-graded $C^*$-algebra of compact operators on the $\Z_2$-graded Hilbert space $\widehat\ell^2=\ell^2\oplus\ell^2$ with even and odd part equal to the standard  separable, infinite dimensional Hilbert space $\ell^2$.

The role of $\widehat\K$ is stabilization: Given two separable, $\Z_2$-graded Hilbert spaces $H_1,H_2$, any isometry $V:H_1\hatotimes \widehat\ell^2\to H_2\hatotimes \widehat\ell^2$ defines an injective $*$-homomorphism
\[\operatorname{Ad}_V:\,\K(H_1)\hatotimes \widehat\K\to\K(H_2)\hatotimes  \widehat\K,\quad T\mapsto VTV^*.\]
The homotopy class of $\operatorname{Ad}_V$ is independent of the choice of $V$ and therefore defines a canonical isomorphism between $\K(H_1)\hatotimes \widehat\K$ and $\K(H_2)\hatotimes  \widehat\K$ in the asymptotic category. 
The proof is a standard argument which will also be referred to later on: Any two isometries $V_0,V_1:H_1\hatotimes \widehat\ell^2\to H_2\hatotimes \widehat\ell^2$ are homotopic in the strong operator topology to 
\[\begin{pmatrix}V_0\\0\end{pmatrix}\,,\, \begin{pmatrix}0\\V_1\end{pmatrix}\,:\quad H_1\hatotimes \widehat\ell^2\to 
H_2\hatotimes \widehat\ell^2\oplus H_2\hatotimes \widehat\ell^2\cong H_2\hatotimes \widehat\ell^2\,,\]
respectively, by a Hilbert's hotel argument. These in turn are homotopic to each other via $\tilde V_t:=\begin{pmatrix}\cos(t)V_0\\\sin(t)V_1\end{pmatrix}$.
Combined we obtain a strongly continuous homotopy $V_t$ between $V_0$ and $V_1$. Now for every $T\in\K(H_1)\hatotimes\widehat\K$ the compactness of $T$ implies that $t\mapsto\operatorname{Ad}_{V_t}(T)$ is even \emph{norm-continuous}, i.\,e.\ $\operatorname{Ad}_{V_t}$ is a homotopy of $*$-homomorphisms and the claim follows.

In particular, $\widehat\K\hatotimes \widehat\K$, $\K\hatotimes \widehat\K$ and $\MA_{I,J}(\C)\hatotimes \widehat\K$ are all canonically isomorphic to $\widehat\K$.

The second $\Z_2$-graded $C^*$-algebra is $C_0(\R)$, but with non-trivial grading given by 
the direct sum decomposition into even and odd functions. This $\Z_2$-graded $C^*$-algebra is denoted by $\mathcal{S}$.

Recall from \cite[Section 1.3]{HigGue} that $\mathcal{S}$ is also a co-algebra with co-unit $\eta:\mathcal{S}\to\C,f\mapsto f(0)$ and a co-multiplication $\Delta:\mathcal{S}\to\mathcal{S}\hatotimes \mathcal{S}$. The definition of $\Delta$ is not relevant to us, as we shall explain below. It is enough to know the axioms of a co-algebra, i.\,e.\ that
\begin{equation}\label{eq:coalgebraaxioms}
\xymatrix{
\mathcal{S}\ar[r]^{\Delta}\ar[d]_{\Delta}&\mathcal{S}\hatotimes \mathcal{S}\ar[d]^{\id\hatotimes \Delta}&\mathcal{S}\ar[d]_{\id}\ar[r]^{\id}\ar[dr]^{\Delta}&\mathcal{S}
\\\mathcal{S}\hatotimes \mathcal{S}\ar[r]_{\Delta\hatotimes \id}&\mathcal{S}\hatotimes \mathcal{S}\hatotimes \mathcal{S}&\mathcal{S}&\mathcal{S}\hatotimes \mathcal{S}\ar[l]^{\eta\hatotimes \id}\ar[u]_{\eta\hatotimes \id}
}\end{equation}
commute.

\begin{mdef}\label{def:Etheory}
Let $A,B$ be $\Z_2$-graded $C^*$-algebras. The \emph{$E$-theory} of $A,B$  is 
\[E(A,B)=\llbracket\mathcal{S}\hatotimes  A\hatotimes \widehat\K,B\hatotimes \widehat\K\rrbracket_\infty.\]
It is a group with addition given by direct sum 
 of $*$-homomorphisms 
 \[\mathcal{S}\hatotimes  A\hatotimes \widehat\K\to \mathfrak{A}^n(B\hatotimes \widehat\K)\]
  (via an inclusion $\widehat\K\oplus\widehat\K\hookrightarrow\widehat\K$, which is canonical up to homotopy) and the zero element represented by the zero $*$-homomorphism.
\end{mdef}

\begin{rem}
By \cite[Theorem 2.16]{GueHigTro}, this definition is equivalent to \cite[Definition 2.1]{HigGue} when $A,B$ are separable.
For non-separable $C^*$-al\-ge\-bras, however, it is essential to use Definition \ref{def:Etheory}, because otherwise the products defined below might not exist.
\end{rem}

There is a composition product 
\[E(A,B)\otimes E(B,C)\to E(A,C),\quad (\phi,\psi)\mapsto \psi\circ\phi,\]
where $\psi\circ\phi\in E(A,C)$ is defined to be the composition 
\[\mathcal{S}\hatotimes  A\hatotimes \widehat\K\xrightarrow{\Delta\hatotimes \id_{A\hatotimes \widehat\K}}\mathcal{S}\hatotimes \mathcal{S}\hatotimes  A\hatotimes \widehat\K\xrightarrow{\id_{\mathcal{S}}\hatotimes \phi}\mathcal{S}\hatotimes  B\hatotimes \widehat\K\xrightarrow{\psi}C\hatotimes \widehat\K\]
of morphisms in the asymptotic category.

There is also an exterior product
\[E(A_1,B_1)\otimes E(A_2,B_2)\to E(A_1\hatotimes  A_2,B_1\hatotimes  B_2),\quad (\phi,\psi)\mapsto \phi\hatotimes \psi,\]
where $\phi\hatotimes \psi\in  E(A_1\hatotimes  A_2,B_1\hatotimes  B_2)$ is defined to be the composition
\begin{align*}
\mathcal{S}\hatotimes  A_1\hatotimes  A_2\hatotimes \widehat\K&\xrightarrow{\Delta\hatotimes \id}\mathcal{S}\hatotimes \mathcal{S}\hatotimes  A_1\hatotimes  A_2\hatotimes \widehat\K
\cong\mathcal{S}\hatotimes  A_1\hatotimes \widehat\K\hatotimes \mathcal{S}\hatotimes  A_2\hatotimes \widehat\K
\\&\xrightarrow{\phi\hatotimes \psi}B_1\hatotimes \widehat\K\hatotimes  B_2\hatotimes \widehat\K
\cong B_1\hatotimes  B_2\hatotimes \widehat\K
\end{align*}
of morphisms in the asymptotic category.

\begin{thm}[({\cite[Theorems 2.3, 2.4]{HigGue}})]
With these composition and exterior products,
the $E$-theory groups $E(A,B)$ are the morphism groups in an additive monoidal category $\mathbf{E}$ whose objects are the $\Z_2$-graded $C^*$-algebras. 
\end{thm}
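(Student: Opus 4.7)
The plan is to verify three layers of structure separately: the category axioms, the abelian group structure on morphism sets, and the monoidal structure coming from the exterior product. Everything rests on the co-algebra axioms \eqref{eq:coalgebraaxioms} for $\mathcal{S}$, the composition law in the asymptotic category (Proposition \ref{prop:asymptoticcomposition}), and the standard stabilization tricks available because of the $\widehat\K$-factor. For associativity of composition, given $\phi_i \in E(A_i, A_{i+1})$ for $i=1,2,3$, both $(\phi_3 \circ \phi_2) \circ \phi_1$ and $\phi_3 \circ (\phi_2 \circ \phi_1)$ unfold to asymptotic morphisms that differ only by whether the initial $\mathcal{S}$-factor is expanded through $(\Delta \hatotimes \id_{\mathcal{S}}) \circ \Delta$ or through $(\id_{\mathcal{S}} \hatotimes \Delta) \circ \Delta$; coassociativity (left square of \eqref{eq:coalgebraaxioms}) forces the two to agree. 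The identity $1_A \in E(A,A)$ is the class of $\eta \hatotimes \id_{A \hatotimes \widehat\K}: \mathcal{S} \hatotimes A \hatotimes \widehat\K \to A \hatotimes \widehat\K$, and the counit triangles of \eqref{eq:coalgebraaxioms} make this a two-sided identity.

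For the additive structure, well-definedness of direct-sum addition reduces to showing that two different isometries $\widehat\K \oplus \widehat\K \hookrightarrow \widehat\K$ yield homotopic induced $*$-homomorphisms; this is exactly the Hilbert's hotel argument already spelled out in the excerpt, upgrading a strongly continuous homotopy of isometries to a norm continuous homotopy when applied to compact operators. Associativity, commutativity, and the zero element are then immediate. The main obstacle is producing additive inverses: I would set $-\phi$ to be $\phi$ post-composed with the grading-reversing automorphism of $\widehat\K$ induced by conjugation with the odd unitary swapping the two copies of $\ell^2$, and then exhibit $\phi \oplus (-\phi) \sim 0$ through a rotation homotopy inside $\widehat\K \oplus \widehat\K \cong \widehat\K$ of exactly the same flavour as the $\tilde V_t$ already used in the excerpt. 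Bilinearity of composition is then formal from distributivity of composition over direct sums.

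Finally, for the monoidal structure, the unit object is $\C$ and the tensor product on objects is $\hatotimes$; the coherence isomorphisms (associators and unitors) descend from those of the asymptotic category, which are in turn inherited from the classical ones for the maximal graded tensor product of $C^*$-algebras. The only nontrivial verification is the interchange law
\[(\phi_2 \hatotimes \psi_2) \circ (\phi_1 \hatotimes \psi_1) = (\phi_2 \circ \phi_1) \hatotimes (\psi_2 \circ \psi_1)\]
in $E(A_1 \hatotimes A_2, C_1 \hatotimes C_2)$. Both sides unwind to asymptotic compositions involving several copies of $\Delta$ applied to the various $\mathcal{S}$-factors, and a diagram chase using coassociativity together with the canonical flip on $\mathcal{S} \hatotimes \mathcal{S}$ reduces one to the other. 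The hardest step of the whole proof is the honest construction of additive inverses: the rotation must be carried out inside $\mathfrak{A}^n(B \hatotimes \widehat\K)$ as a genuine $*$-homomorphism homotopy rather than a formal $K$-theoretic cancellation, and verifying that the resulting family depends continuously on the homotopy parameter in the correct norm requires the same compactness argument that drove the well-definedness of the sum.
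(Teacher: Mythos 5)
First, a remark on context: the paper does not prove this statement at all --- it is imported verbatim from \cite[Theorems 2.3, 2.4]{HigGue}, with the detailed arguments living in \cite{GueHigTro}. So what you are really attempting is a reconstruction of that external proof. Your skeleton for the categorical and monoidal layers is the standard one and is essentially right: associativity of the composition product and the two-sidedness of the identity do reduce to the coassociativity and counit triangles of \eqref{eq:coalgebraaxioms}, and well-definedness and commutativity of the direct-sum addition is exactly the Hilbert's-hotel/rotation argument the paper itself spells out for stability. Two smaller caveats: the interchange law also needs cocommutativity of $\Delta$ up to homotopy, which is not among the displayed axioms \eqref{eq:coalgebraaxioms} (it is in \cite[Section 1.3]{HigGue}); and the genuinely laborious part of the monoidal structure --- that $\hatotimes$ descends to $n$-homotopy classes and to $\llbracket\,\cdot\,,\,\cdot\,\rrbracket_\infty$, which is what forces the \emph{maximal} graded tensor product --- is concentrated in \cite[Theorem 4.6]{GueHigTro}, which you take as given.

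The genuine gap is the construction of additive inverses, which you rightly identify as the hardest step but then resolve by a mechanism that cannot work. You set $-\phi:=\operatorname{Ad}_U\circ\phi$ for an odd unitary $U$ and propose to null-homotope $\phi\oplus(\operatorname{Ad}_U\circ\phi)$ by a rotation ``of the same flavour as $\tilde V_t$''. But $\phi\oplus(\operatorname{Ad}_U\circ\phi)$ is $\phi\oplus\phi$ conjugated by a fixed unitary, and every homotopy assembled from conjugations by isometries or unitaries is isometric on the image of the $*$-homomorphism: such rotations can identify two injective $*$-homomorphisms (which is all the $\tilde V_t$ argument is used for in the paper), but they can never connect a nonzero $*$-homomorphism to the zero $*$-homomorphism. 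It is true that $\llbracket\operatorname{Ad}_U\rrbracket=-1$ in $E(\C,\C)$ (an even rank-one projection is carried to an odd one), so post-composition with $\operatorname{Ad}_U$ \emph{is} multiplication by $-1$ --- but invoking that to conclude $\phi\oplus(\operatorname{Ad}_U\circ\phi)\sim 0$ presupposes the group structure you are trying to establish, so the argument is circular exactly at the crucial point.

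The honest construction of inverses must use the $\mathcal{S}$-factor, which is the entire reason $\mathcal{S}$ appears in Definition \ref{def:Etheory}: without it, $\llbracket A\hatotimes\widehat\K,B\hatotimes\widehat\K\rrbracket_\infty$ is only an abelian monoid. The inverse of $\llbracket\phi\rrbracket$ is represented by $\phi$ precomposed with the reflection $f\mapsto f(-\,\cdot\,)$ of $\mathcal{S}$ \emph{and} combined with the grading flip on the stabilizing $\widehat\K$; the null-homotopy of the sum is then the graded analogue of the classical ``a path followed by its reverse is null-homotopic'' rotation, carried out in the suspension direction (in the spectral picture, one rotates $f(X)\oplus f(-X)$ through $f$ applied to a family of odd self-adjoint operators ending at an invertible one, at which point the morphism is degenerate). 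This is the argument you would need to supply, and it cannot be replaced by a twist of the target $\widehat\K$ alone.
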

We conclude this section by mentioning some properies of $E$-theory needed for our computations. Our earlier observations imply:

\begin{thm}[(Stability)]\label{thm:EStability}
For any separable $\Z_2$-graded Hilbert space $H$, the $\Z_2$-graded $C^*$-algebra $\K(H)$ is canonically isomorphic in the category $\mathbf{E}$ to $\C$. In particular, this applies to $\widehat\K$, $\K$ and $\MA_{I,J}(\C)$.
\end{thm}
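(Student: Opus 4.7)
The plan is to produce an explicit grading-preserving $*$-isomorphism $\widehat\K\cong\K(H)\hatotimes\widehat\K$ and then to promote it to an isomorphism in $\mathbf{E}$. Assume $H\neq 0$ (otherwise there is nothing to check). Since $\widehat\ell^2$ and $H\hatotimes\widehat\ell^2$ are separable $\Z_2$-graded Hilbert spaces both of whose even and odd components are countably infinite dimensional, there is a grading-preserving unitary $V:\widehat\ell^2\xrightarrow{\cong}H\hatotimes\widehat\ell^2$. Conjugation gives
\[\operatorname{Ad}_V:\widehat\K=\K(\widehat\ell^2)\xrightarrow{\cong}\K(H\hatotimes\widehat\ell^2)=\K(H)\hatotimes\widehat\K\]
with literal two-sided inverse $\operatorname{Ad}_{V^*}$; canonicity (independence from $V$) of this identification is precisely the Hilbert-hotel argument recalled just above the statement of the theorem.

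To promote this $*$-isomorphism into an isomorphism in $\mathbf{E}$, I would use the counit $\eta:\mathcal{S}\to\C$ of $\mathcal{S}$ and define
\[\phi:=\llbracket\eta\hatotimes\operatorname{Ad}_V\rrbracket\in E(\C,\K(H)),\qquad\psi:=\llbracket\eta\hatotimes\operatorname{Ad}_{V^*}\rrbracket\in E(\K(H),\C).\]
Unpacking the composition product, $\psi\circ\phi$ is represented by
\[\mathcal{S}\hatotimes\widehat\K\xrightarrow{\Delta\hatotimes\id}\mathcal{S}\hatotimes\mathcal{S}\hatotimes\widehat\K\xrightarrow{\id\hatotimes\eta\hatotimes\operatorname{Ad}_V}\mathcal{S}\hatotimes\K(H)\hatotimes\widehat\K\xrightarrow{\eta\hatotimes\operatorname{Ad}_{V^*}}\widehat\K,\]
which sends an elementary tensor $f\hatotimes T$ to $((\eta\hatotimes\eta)\circ\Delta)(f)\cdot(\operatorname{Ad}_{V^*}\operatorname{Ad}_V)(T)=\eta(f)\cdot T$ by the counit identity $(\eta\hatotimes\eta)\circ\Delta=\eta$ extracted from the axioms \eqref{eq:coalgebraaxioms} together with $V^*V=\id$. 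This composite is precisely $\eta\hatotimes\id_{\widehat\K}$, i.e.\ the identity morphism $1_\C$ in $\mathbf{E}$; the symmetric calculation gives $\phi\circ\psi=1_{\K(H)}$.

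The only substantive bookkeeping concerns the graded tensor swap and associativity isomorphisms invoked when identifying $\C\hatotimes\K(H)\hatotimes\widehat\K$ with $\K(H)\hatotimes\widehat\K$ inside the codomain of $\phi$; there is no analytic content whatsoever, because $\operatorname{Ad}_{V^*}\circ\operatorname{Ad}_V=\id$ holds on the nose rather than only up to homotopy, so the whole verification collapses to the algebra of the coalgebra axioms. The closing clause of the statement follows by specializing $H$ to $\widehat\ell^2$, $\ell^2$, or $\C^{I,J}$ respectively.
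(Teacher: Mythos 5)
Your proof is correct and follows essentially the same route as the paper, which deduces the theorem from the stabilization discussion preceding it: conjugation by an isometry (in your case a unitary) between the stabilized Hilbert spaces $\widehat\ell^2$ and $H\hatotimes\widehat\ell^2$, with the Hilbert-hotel argument supplying canonicity. The only cosmetic difference is that you unwind the $E$-theoretic composition product by hand via $(\eta\hatotimes\eta)\circ\Delta=\eta$, whereas the paper simply invokes that the functor of Theorem \ref{thm:EFunctoriality} from the asymptotic category to $\mathbf{E}$ preserves isomorphisms.
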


\begin{thm}[({\cite[Theorems 2.3, 2.4]{HigGue}})]\label{thm:EFunctoriality}
There is a monoidal functor from the asymptotic category into $\mathbf{E}$ which is the identity on the objects and maps $\phi\in\llbracket A,B\rrbracket_\infty$ to the morphism
\[\mathcal{S}\hatotimes  A\hatotimes \widehat\K\xrightarrow{\llbracket\eta\rrbracket\hatotimes \phi\hatotimes 1_{\widehat\K}}B\hatotimes \widehat\K\]
in the asymptotic category, which we denote by the same letter $\phi$.
\end{thm}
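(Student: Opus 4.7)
The plan is to first verify that the assignment on morphisms is well-defined, and then to check the three functorial axioms (preservation of identities, preservation of composition, and monoidality) using the co-unit and co-associativity axioms of $\mathcal{S}$ together with the stability result implicit in Theorem \ref{thm:EStability}.

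On objects, set $F(A)=A$. On morphisms, given a representative $\phi:A\to\mathfrak{A}^n(B)$ of a class in $\llbracket A,B\rrbracket_\infty$, define
\[F(\llbracket\phi\rrbracket):=\llbracket\eta\rrbracket\hatotimes\llbracket\phi\rrbracket\hatotimes 1_{\widehat\K}\in\llbracket \mathcal{S}\hatotimes A\hatotimes\widehat\K,B\hatotimes\widehat\K\rrbracket_\infty=E(A,B).\]
Well-definedness on $n$-homotopy classes is immediate from the fact that the tensor product in the asymptotic category (the previous proposition) descends to homotopy classes.

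To check $F(1_A)=1_A$ in $\mathbf{E}$, note that on the one hand $F(1_A)=\llbracket\eta\rrbracket\hatotimes 1_A\hatotimes 1_{\widehat\K}$, while by definition the identity of $A$ in $\mathbf{E}$ acts on any $\psi\in E(A,B)$ via post-composition with $\Delta\hatotimes\id$. Thus the claim is that for any such $\psi$ the composite
\[\mathcal{S}\hatotimes A\hatotimes\widehat\K\xrightarrow{\Delta\hatotimes\id}\mathcal{S}\hatotimes\mathcal{S}\hatotimes A\hatotimes\widehat\K\xrightarrow{\id_{\mathcal{S}}\hatotimes F(1_A)}\mathcal{S}\hatotimes A\hatotimes\widehat\K\xrightarrow{\psi}B\hatotimes\widehat\K\]
equals $\psi$; but the first two arrows compose to $(\id_{\mathcal{S}}\hatotimes\eta\hatotimes\id\hatotimes\id)\circ(\Delta\hatotimes\id)$, which by the right co-unit law in \eqref{eq:coalgebraaxioms} is the identity. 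Hence $F(1_A)$ is a right identity, and since $\mathbf{E}$ is a category it is \emph{the} identity.

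The main step is preservation of composition. Given $\phi:A\to\mathfrak{A}^m(B)$ and $\psi:B\to\mathfrak{A}^n(C)$, I would compute $F(\llbracket\psi\rrbracket)\circ F(\llbracket\phi\rrbracket)$ in $\mathbf{E}$ directly from its definition:
\[\mathcal{S}\hatotimes A\hatotimes\widehat\K\xrightarrow{\Delta\hatotimes\id}\mathcal{S}\hatotimes\mathcal{S}\hatotimes A\hatotimes\widehat\K\xrightarrow{\id_{\mathcal{S}}\hatotimes F(\phi)}\mathcal{S}\hatotimes B\hatotimes\widehat\K\xrightarrow{F(\psi)}C\hatotimes\widehat\K.\]
Expanding $F(\phi)=\llbracket\eta\rrbracket\hatotimes\llbracket\phi\rrbracket\hatotimes 1_{\widehat\K}$, the first two arrows become $(\id_{\mathcal{S}}\hatotimes\eta\hatotimes\id\hatotimes\id)\circ(\Delta\hatotimes\id)$ tensored with $\llbracket\phi\rrbracket\hatotimes 1_{\widehat\K}$; the right co-unit law collapses this to $\id_{\mathcal{S}}\hatotimes\llbracket\phi\rrbracket\hatotimes 1_{\widehat\K}$. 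Composing now with $F(\psi)=\llbracket\eta\rrbracket\hatotimes\llbracket\psi\rrbracket\hatotimes 1_{\widehat\K}$ and using functoriality of $\hatotimes$ yields $\llbracket\eta\rrbracket\hatotimes(\llbracket\psi\rrbracket\circ\llbracket\phi\rrbracket)\hatotimes 1_{\widehat\K}=F(\llbracket\psi\rrbracket\circ\llbracket\phi\rrbracket)$. Monoidality of $F$ is proved by an entirely analogous computation: one expands $F(\llbracket\phi\rrbracket)\hatotimes F(\llbracket\psi\rrbracket)$ using the general formula for the tensor product in $\mathbf{E}$, applies the co-unit law once, and uses the fact that the canonical identification $\widehat\K\hatotimes\widehat\K\cong\widehat\K$ gives the identity in $\mathbf{E}$ by the stability discussion preceding Theorem \ref{thm:EStability}.

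The main obstacle is keeping track of the various bookkeeping identifications: in the composition step one must use the co-associativity and co-unit diagrams \eqref{eq:coalgebraaxioms} in the correct places, and in the monoidality step one must verify that the Hilbert's hotel argument in the discussion preceding Theorem \ref{thm:EStability} makes the stabilization isomorphism $\widehat\K\hatotimes\widehat\K\cong\widehat\K$ independent (up to asymptotic homotopy) of how it is inserted. Once these are handled, the axioms of a monoidal functor follow by direct manipulation in the asymptotic category.
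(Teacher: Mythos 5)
The paper does not prove this statement at all: it is quoted from Higson--Guentner \cite[Theorems 2.3, 2.4]{HigGue}, so there is no in-paper argument to compare against. Your reconstruction is essentially the standard proof and is correct in outline: well-definedness follows from the homotopy-invariance of $\hatotimes$, the identity and composition axioms reduce to cancelling $\Delta$ against $\eta$ (exactly the mechanism the paper itself invokes later when simplifying composition products with classes coming from the asymptotic category), and monoidality reduces to $(\eta\hatotimes\eta)\circ\Delta=\eta$ plus the canonical identification $\widehat\K\hatotimes\widehat\K\cong\widehat\K$, which is the identity in $\mathbf{E}$ by the Hilbert's hotel discussion. Two small points to tighten. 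First, you repeatedly invoke ``the right co-unit law'' $(\id_{\mathcal{S}}\hatotimes\eta)\circ\Delta=\id$, but the displayed axioms \eqref{eq:coalgebraaxioms} only record the left version $(\eta\hatotimes\id)\circ\Delta=\id$; since the paper deliberately withholds the definition of $\Delta$, you should either note that $\Delta$ is co-commutative (true for the standard comultiplication on $\mathcal{S}$) or route the argument through the graded flip so that only the displayed axiom is used. Second, your deduction that a right identity is \emph{the} identity is fine only because $\mathbf{E}$ is already known to be a category with identities (which the same citation supplies); stated on its own it would be circular, so it is worth making explicit that you are taking the category structure of $\mathbf{E}$ as given and only identifying which element of $E(A,A)$ the unit is.
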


Thus, by taking the $E$-theory product with this $E$-theory element, we obtain homomorphisms 
\[E(D,A)\xrightarrow{\phi\circ}E(D,B),\quad E(B,D)\xrightarrow{\circ\phi}E(A,D)\]
for any third $\Z_2$-graded $C^*$-algebra $D$.
Consequently, the $E$-theory groups are contravariantly functorial in the first variable and  covariantly functorial in the second variable with respect to morphisms in the asymptotic category and in particular with respect to $*$-homomorphisms.

These functorialities can be computed more easily than arbitrary composition products in $E$-theory:
If $\psi\in E(D,A)$ and $\phi\in \llbracket A,B\rrbracket_\infty$, then $\phi\circ\psi\in E(D,B)$ is the composition
\[\mathcal{S}\hatotimes  D\hatotimes \K\xrightarrow{\psi}A\hatotimes \widehat\K\xrightarrow{\phi\hatotimes 1_{\widehat\K}}B\hatotimes \K\]
in the asymptotic category. This is, because the co-multiplication $\Delta:\mathcal{S}\to\mathcal{S}\hatotimes \mathcal{S}$ in the definition of the composition product in $E$-theory cancels with the co-unit $\eta:\mathcal{S}\to\C$ appearing in the functor from the asymptotic category to $E$-theory by \eqref{eq:coalgebraaxioms}.

Similarly, if $\psi\in E(B,D)$ and $\phi\in \llbracket A,B\rrbracket_\infty$, then $\psi\circ\phi\in E(A,D)$ is the composition
\[\mathcal{S}\hatotimes  A\hatotimes \widehat\K\xrightarrow{1_{\mathcal{S}}\hatotimes \phi\hatotimes  1_{\widehat\K}}\mathcal{S}\hatotimes  B\hatotimes \widehat\K\xrightarrow{\psi}B\hatotimes \widehat\K,\]
and the exterior product of $\phi\in E(A_1,B_1)$ and $\psi\in \llbracket A_2,B_2\rrbracket_\infty$ is the composition
\[\mathcal{S}\hatotimes  A_1\hatotimes  A_2\hatotimes \widehat\K\xrightarrow{\phi\hatotimes \psi}B_1\hatotimes  B_2\hatotimes \widehat\K.\]

In our applications in the following sections, we have to compute  products only  in cases where one of the factors comes from an asymptotic morphism and not from an $E$-theory class. This is the reason why our computations will not involve $\Delta$ and thus we don't have to know its definition.

Generalizing the functor from the asymptotic category to the $E$-theory category, 
elements of $E(A,B)$ are also obtained from any morphism in the asymptotic category of the form
\[A\hatotimes \K(H_1)\to B\hatotimes \K(H_2)\quad\text{or}\quad\mathcal{S}\hatotimes  A\hatotimes \K(H_1)\to B\hatotimes \K(H_2)\]
where $H_1,H_2$ are arbitrary separable, $\Z_2$-graded Hilbert spaces.
The $E$-theory element is obtained by tensoring with 
$\llbracket\eta\rrbracket\hatotimes \id_{\widehat\K}$ respectively $\id_{\widehat\K}$ and applying stability.

We shall also need invariance under Morita equivalence, which is the second part of the following theorem:
\begin{thm}\label{thm:Morita}
Let $\cE$ be a countably generated, $\Z_2$-graded Hilbert module over a $\Z_2$-graded $C^*$-algebra $B$. Given an isometric, grading preserving inclusion $V:\cE\subset B\hatotimes  H$, where $H$ is a separable, $\Z_2$-graded Hilbert space, we obtain an isometric $*$-homomorphism $\operatorname{Ad}_V:\K(\cE)\subset B\hatotimes \K(H)$ which induces an element
$\Theta_\cE\in E(\K(\cE),B)$.
\begin{enumerate}
\item This element $\Theta_\cE$ always exists and is independent of the choice of the inclusion.
\item If $\cE$ is full, i.\,e.\ $\langle\cE,\cE\rangle=B$, and $B$ has a strictly positive element, then $\Theta_\cE$ is invertible.
\end{enumerate}
\end{thm}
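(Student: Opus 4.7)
The plan is to handle the two parts separately, with Part~(1) being largely formal and Part~(2) requiring a Morita-equivalence argument.

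For Part~(1), the existence of an isometric embedding $V:\cE\to B\hatotimes H$ for some separable $\Z_2$-graded Hilbert space $H$ is a direct consequence of the graded Kasparov stabilisation theorem: since $\cE$ is countably generated, $\cE\oplus (B\hatotimes\widehat\ell^{\,2})\cong B\hatotimes\widehat\ell^{\,2}$, and projecting onto the first summand supplies $V$. The induced $*$-homomorphism $\operatorname{Ad}_V:\K(\cE)\to B\hatotimes\K(H)$ is isometric because $V$ is an isometry with adjoint satisfying $V^*V=\id_\cE$. Tensoring with $\id_{\widehat\K}$ and applying stability ($\K(H)\hatotimes\widehat\K\cong\widehat\K$) as outlined after Theorem~\ref{thm:EFunctoriality} produces the desired class $\Theta_\cE\in E(\K(\cE),B)$.

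For independence from the choice of $V$, I would reuse verbatim the Hilbert-hotel homotopy argument that the paper already spells out for the stabilisation isomorphism. Given two isometric embeddings $V_0,V_1:\cE\to B\hatotimes H$, first replace $H$ by $H\hatotimes\widehat\ell^{\,2}$ (harmless after stabilisation) so that $V_0,V_1$ can both be deformed strongly to $\bigl(\begin{smallmatrix}V_0\\0\end{smallmatrix}\bigr)$ and $\bigl(\begin{smallmatrix}0\\V_1\end{smallmatrix}\bigr)$, and then interpolated by $V_t=\bigl(\begin{smallmatrix}\cos(t)V_0\\\sin(t)V_1\end{smallmatrix}\bigr)$. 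For every $T\in\K(\cE)$, compactness of $T$ turns strong continuity of $t\mapsto V_t$ into norm continuity of $t\mapsto\operatorname{Ad}_{V_t}(T)$, so $\operatorname{Ad}_{V_t}$ is a genuine norm-continuous homotopy of $*$-homomorphisms, and hence $\operatorname{Ad}_{V_0}$ and $\operatorname{Ad}_{V_1}$ represent the same element of $E(\K(\cE),B)$.

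For Part~(2), the idea is to build the inverse from the dual module. When $\cE$ is full and $B$ is $\sigma$-unital (equivalent to the existence of a strictly positive element), the conjugate module $\cE^{*}$ is a countably generated $\Z_2$-graded Hilbert $\K(\cE)$-module with $\K(\cE^{*})\cong B$. Applying the construction of Part~(1) to $\cE^{*}$ in the role of $\cE$ produces a class $\Theta_{\cE^{*}}\in E(\K(\cE^{*}),\K(\cE))=E(B,\K(\cE))$, which is the candidate inverse. To verify that $\Theta_{\cE^{*}}\circ\Theta_\cE=1_{\K(\cE)}$ and $\Theta_\cE\circ\Theta_{\cE^{*}}=1_B$, pick concrete inclusions $V:\cE\subset B\hatotimes H$ and $W:\cE^{*}\subset\K(\cE)\hatotimes H'$ and form the internal tensor product $V\otimes_B W$. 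Under the Morita isomorphisms $\cE\otimes_B\cE^{*}\cong\K(\cE)$ and $\cE^{*}\otimes_{\K(\cE)}\cE\cong B$, the two composites translate to two isometric embeddings of $\K(\cE)$ (respectively $B$) into $\K(\cE)\hatotimes\K(H\hatotimes H')$ (respectively $B\hatotimes\K(H\hatotimes H')$), both of which are manifestly of the form $\operatorname{Ad}_{V'}$ for an isometric inclusion of the module into its own enveloping stable space. By Part~(1), any two such inclusions are equivalent in $E$-theory to the canonical one coming from the unit, giving $1_{\K(\cE)}$ and $1_B$ respectively.

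The main obstacle is precisely this last bookkeeping step in Part~(2): one must show that the $E$-theory class computed from the internal tensor product inclusion $V\otimes_B W$ agrees with the identity. Conceptually this is forced by the Morita equivalence, but technically it requires identifying the asymptotic morphism built from $V\otimes_B W$ with the one built from the trivial inclusion $B\hookrightarrow B\hatotimes\K(H\hatotimes H')$ and then invoking the uniqueness statement of Part~(1). Everything else in the argument is formal once Kasparov stabilisation, the Hilbert-hotel homotopy, and the Morita bimodule structure of $\cE$ are in hand.
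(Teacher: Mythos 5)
Your Part (1) reproduces the paper's argument essentially verbatim: Kasparov stabilisation for existence, and the Hilbert-hotel deformation together with the strong-to-norm continuity of $t\mapsto\operatorname{Ad}_{V_t}(T)$ on compact operators for uniqueness. For Part (2) you take a genuinely different route. The paper does not construct an inverse at all: it invokes the Mingo--Phillips theorem \cite{MingoPhillips}, which under exactly the hypotheses of fullness and $\sigma$-unitality yields a unitary isomorphism $\cE\hatotimes\widehat\ell^{\,2}\cong B\hatotimes\widehat\ell^{\,2}$; the same Hilbert-hotel argument then homotopes $V\hatotimes\id_{\widehat\ell^{\,2}}$ to that unitary, so $\Theta_\cE$ is represented by an honest $*$-isomorphism $\K(\cE)\hatotimes\widehat\K\cong B\hatotimes\widehat\K$ and is invertible for trivial reasons. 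Your dual-module construction is the standard way Morita equivalences are inverted in $KK$-theory and it does work here, since both $\Theta_\cE$ and $\Theta_{\cE^*}$ come from genuine $*$-homomorphisms and the monoidal functor of Theorem \ref{thm:EFunctoriality} turns composition of their classes into composition of the $*$-homomorphisms; via $\cE\otimes_B\cE^*\cong\K(\cE)$ and $\cE^*\otimes_{\K(\cE)}\cE\cong B$ the composites become $\operatorname{Ad}$ of isometric inclusions of $\K(\cE)$, respectively $B$, regarded as modules over themselves, and the uniqueness statement of Part (1) identifies these with the standard corner embeddings, i.e.\ with the identities. What your route buys is an explicit inverse and independence from the Mingo--Phillips theorem; what it costs is exactly the bookkeeping you yourself flag as ``the main obstacle'' and do not carry out, together with two details you should not skip: that $\cE^*$ is countably generated over $\K(\cE)$ (this needs $\sigma$-unitality of $\K(\cE)$, which follows from the hypotheses but is not free), and that the class of $B$ as a module over itself is $1_B$. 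As written, Part (2) is therefore a correct strategy with its decisive step left as an acknowledged placeholder; the paper's one-line appeal to \cite{MingoPhillips} discharges all of it at once, and you should either execute the tensor-product identification in full or switch to that shortcut.
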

\begin{proof}
Because of stability we may assume $H=\widehat\ell^2$.
The existence of an inclusion $\cE\subset B\hatotimes \widehat\ell^2$ is guaranteed by Kasparov's stabilization Theorem \cite[Theorem 2]{KasparovHilbertModules}. 

To prove uniqueness, we use the standard argument seen earlier: Any two isometric and grading preserving inclusions $V_{0,1}:\cE\xrightarrow{\subset} B\hatotimes \widehat\ell^2$ are homotopic in the strong operator topology and thus $\operatorname{Ad}_{V_{0,1}}$ are homotopic $*$-homomorphisms between the $C^*$-algebras of compact operators.

The same argument also proves that we can homotop $V\hatotimes \id_{\widehat\ell^2}$ to the isomorphism $\cE\otimes\widehat\ell^2\cong B\hatotimes \widehat\ell^2$ which always exists under the additional assumptions of the second part by \cite[Theorems 1.9]{MingoPhillips}.
Thus, $\Theta_\cE$ is represented by a $*$-isomorphism and is therefore invertible.

Note that the cited theorems are only formulated for the ungraded case, but $\Z_2$-gradings are readily implemented into their proofs.
\end{proof}

\begin{cor}\label{cor:bundlemorita}
If $E\to M$ is nowhere zero dimensional, then the inclusion 
\[\K(\cE)\subset \MA_{I,J}(\FA)\]
of Equation 
\eqref{eq:Moritainclusion} induces an invertible element of
$E(\K(\cE),\FA).$
\end{cor}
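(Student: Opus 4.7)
The strategy is to verify the hypotheses of Theorem \ref{thm:Morita}(2) for the Hilbert $\FA$-module $\cE$, so that the inclusion \eqref{eq:Moritainclusion} realizes the invertible $E$-theory element $\Theta_\cE$.

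First I would identify the $E$-theory class. The embedding $E\hookrightarrow \C^{I,J}\times M$ via the projection $p$ yields an isometric, grading-preserving inclusion $V\colon \cE\hookrightarrow (\FA)^{I,J}\cong \FA\hatotimes \C^{I,J}$. Composing with a grading-preserving inclusion $\C^{I,J}\hookrightarrow \widehat\ell^2$ produces an isometry into $\FA\hatotimes \widehat\ell^2$, and the corresponding $\operatorname{Ad}_V$ of Theorem \ref{thm:Morita} is precisely \eqref{eq:Moritainclusion} followed by the Morita stability inclusion $\MA_{I,J}(\FA)\hookrightarrow \FA\hatotimes \widehat\K$, which is invertible in $\mathbf{E}$ by Theorem \ref{thm:EStability}. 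Hence the $E$-theory class of \eqref{eq:Moritainclusion} coincides with $\Theta_\cE\in E(\K(\cE),\FA)$, and it suffices to show $\Theta_\cE$ is invertible.

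Two of the three hypotheses of Theorem \ref{thm:Morita}(2) are straightforward. The module $\cE$ is finitely (hence countably) generated, since it embeds in $(\FA)^{I,J}$. The algebra $\FA$ admits a strictly positive element because it is separable: $M$ being a compact manifold, the graph $G$ is a second countable (possibly non-Hausdorff) manifold of fixed dimension, so $C_c^\infty(G)$ and therefore $\FA$ are norm-separable.

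The third hypothesis, fullness of $\cE$, is the place where the assumption that $E$ is nowhere zero dimensional is used. I would choose a finite open cover $\{U_\alpha\}$ of $M$ over which $E$ admits orthonormal frames $e_1^\alpha,\dots,e_{n_\alpha}^\alpha$ (possible precisely because the rank of $E$ is positive everywhere), with subordinate partition of unity $\{\phi_\alpha^2\}$. For any $f\in C_c^\infty(G)$ supported in a coordinate patch contained in $G^{U_\alpha}$, take $\zeta\in \cE^\infty$ whose only nonzero frame coefficient is $f$ against $e_1^\alpha$, and $\xi=\tau\cdot e_1^\alpha$ where $\tau\in C_c^\infty(G)$ is a smooth approximate delta function concentrated on the unit space $u(U_\alpha)\subset G$. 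Plugging these into the convolution formula for $\langle\,\cdot\,,\,\cdot\,\rangle_{\cE^\infty}$ shows $\langle\xi,\zeta\rangle_\cE\to f$ in $\FA$ as $\tau$ concentrates on $u(M)$. Decomposing a general $f$ via the partition of unity $\{\phi_\alpha^2\}$ then shows that the closed self-adjoint ideal $\langle\cE,\cE\rangle_\cE\subset \FA$ contains the dense subspace $C_c^\infty(G)$ and hence equals $\FA$. Theorem \ref{thm:Morita}(2) then yields the invertibility of $\Theta_\cE$.

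The main technical obstacle is the approximate-identity calculation in the fullness step: one must verify that the convolution-based inner product really does converge to $f$ in $\FA$-norm, and handle the non-Hausdorff character of $G$ together with the leafwise density $\alpha$ appearing in the convolution. Once this local reconstruction is set up, the remainder of the proof is a mechanical assembly from the identifications above.
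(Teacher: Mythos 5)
Your proposal is correct and follows exactly the paper's route: the paper likewise invokes Theorem \ref{thm:Morita}(2), noting that $\FA$ is separable (hence has a strictly positive element, by Aarnes--Kadison) and that $\cE$ is full because $E$ is nowhere zero dimensional. You supply more detail than the paper does (the identification of the class of \eqref{eq:Moritainclusion} with $\Theta_\cE$ via stability, and the approximate-identity argument for fullness), but the underlying argument is the same.
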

\begin{proof}
The foliation algebra $\FA$ is separable and thus contains a strictly positive element by \cite{AarnesKadison}. 
The Hilbert-$\FA$-module $\cE$ is full, because $E$ is nowhere zero dimensional.
\end{proof}

The additive monoidal category $\mathbf{KK}$, whose objects are separable $\Z_2$-graded $C^*$-algebras, has similar properties as $\mathbf{E}$ (\cite{KasparovOperatorKFunctor}, see also \cite{BlaKK}).
Recall that its morphism groups $KK(A,B)$ are defined for all $\Z_2$-graded $C^*$-algebras where $A$ is separable ($B$ need not be separable).
The functor from the category of $\Z_2$-graded separable $C^*$-algebras and $*$-homomorphisms to $\mathbf{E}$ factors canonically through $\mathbf{KK}$. 
The maps $KK(A,B)\to E(A,B)$ of this functor also exist when $B$ is not separable and are  isomorphisms if $A$ is nuclear. 
In particular, the functor $E(\C,\,\_\,)$ from the category of  $\Z_2$-graded $C^*$-algebras to abelian groups is canonically naturally isomorphic to $K$-theory.

Index theory is usually formulated in terms of $KK$-theory whereas we have to use $E$-theory.
Therefore, we have to know these maps explicitly to transfer basic notions to $E$-theory.

In the unbounded picture of $KK$-theory of \cite{BaaJul} (see also \cite[Section 17.11]{BlaKK}), elements of $KK(A,B)$ are represented by triples $(\cE,\rho,D)$, where
\begin{itemize}
\item $\cE$ is a countably generated, $\Z_2$-graded Hilbert-$B$-module,
\item $\rho:A\to\B(\cE)$ is a grading preserving representation of $A$ on $\cE$,
\item $D$ is an odd selfadjoint regular operator on $\cE$
\end{itemize}
such that for all $a$ in a dense subset of $A$, the commutator $[\rho(a),D]$ is densely defined and extends to a bounded operator on $\cE$ and $\rho(a)(D\pm i)^{-1}\in\K(\cE)$.

\begin{prop}[({cf.\ \cite[Section 8]{ConHig}})]\label{prop:KKEiso}
Under the canonical map
\[KK(A,B)\to E(A,B),\]
the element represented by the triple $(\cE,\rho,D)$ is mapped to the class of the asymptotic morphism
\[\mathcal{S}\hatotimes  A\to \mathfrak{A}(\K(\cE)),\quad f\hatotimes  a\mapsto \overline{t\mapsto \rho(a)f(t^{-1}D)}\]
in $E(A,\K(\cE))$ composed with the element $\Theta_\cE\in E(\K(\cE),B)$ of Theorem \ref{thm:Morita}.
\end{prop}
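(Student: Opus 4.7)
The plan is to verify two things: first, that the formula
\[\Psi:\mathcal{S}\hatotimes A\to\mathfrak{A}(\K(\cE)),\quad f\hatotimes a\mapsto\overline{t\mapsto \rho(a)f(t^{-1}D)}\]
genuinely defines an asymptotic morphism into $\K(\cE)$; second, that the resulting $E$-theory element, composed with $\Theta_\cE$, coincides with the image of $[(\cE,\rho,D)]$ under the canonical map $KK(A,B)\to E(A,B)$.

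For the first part I would check the three standard points. Since $\mathcal{S}=C_0(\R)$ is generated as a $C^*$-algebra by the resolvents $(x\pm i)^{-1}$, and since
\[\rho(a)(t^{-1}D\pm i)^{-1}=t\cdot\rho(a)(D\pm ti)^{-1}\in\K(\cE)\]
by the hypothesis that $\rho(a)(D\pm i)^{-1}\in\K(\cE)$ (together with the resolvent identity), it follows by norm approximation that $\rho(a)f(t^{-1}D)\in\K(\cE)$ for all $f\in\mathcal{S}$. Continuity of the lift $t\mapsto\rho(a)f(t^{-1}D)$ on $[1,\infty)$ reduces by the same density argument to norm-continuity of $t\mapsto(t^{-1}D\pm i)^{-1}$, which is standard. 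Asymptotic multiplicativity follows from
\[\rho(a)f(t^{-1}D)\rho(b)g(t^{-1}D)-\rho(ab)(fg)(t^{-1}D)=\rho(a)[f(t^{-1}D),\rho(b)]g(t^{-1}D),\]
together with the commutator bound
\[\bigl\|[(t^{-1}D\pm i)^{-1},\rho(b)]\bigr\|=\bigl\|t^{-1}(t^{-1}D\pm i)^{-1}[D,\rho(b)](t^{-1}D\pm i)^{-1}\bigr\|=O(t^{-1})\]
for $b$ in the dense subalgebra where $[D,\rho(b)]$ is bounded; by another resolvent-density argument this extends to $\|[f(t^{-1}D),\rho(b)]\|\to 0$ for all $f\in\mathcal{S}$. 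The $*$-condition uses selfadjointness of $D$ in the same way.

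For the second part I would invoke the definition of the canonical map $KK(A,B)\to E(A,B)$ via the Connes--Higson construction: starting from the bounded transform $F=D(1+D^2)^{-1/2}$, one produces an asymptotic morphism $\mathcal{S}\hatotimes A\to\mathfrak{A}(\K(\cE))$, and the claim is that this is $\Psi$ up to homotopy. The link is that, writing $u_t(x):=x(1+t^{-2}x^2)^{-1/2}$, one has $u_t(t^{-1}D)=t^{-1}F_t$ where $F_t$ interpolates between the unbounded $D$ (at $t=\infty$) and its bounded transform, so functional calculus $f(t^{-1}D)$ can be rewritten in terms of a $t$-dependent bounded functional calculus of $F$. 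Choosing $f$ to range over a generating set of $\mathcal{S}$ compatible with both presentations yields a straight-line homotopy of asymptotic morphisms, after which one applies $\Theta_\cE$ and lands in $E(A,B)$; this is precisely the calculation carried out in \cite[Section 8]{ConHig}.

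The main obstacle is the bookkeeping in this second part: the Connes--Higson construction is usually phrased in terms of bounded Kasparov modules and paths of bounded operators, while the formula in the proposition is written in the unbounded Baaj--Julg picture. Bridging these requires a careful $t$-rescaling argument showing that replacing $D$ by its bounded transform $F$ and the functional calculus by the Connes--Higson path gives a homotopic asymptotic morphism. Once that rescaling is in place, the remaining verification is a routine application of $C^*$-functional calculus and the universal property of $\mathcal{S}$.
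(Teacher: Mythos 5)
The first thing to note is that the paper does not prove this proposition at all: it is stated with the tag ``cf.\ \cite[Section 8]{ConHig}'' and used as a quoted fact, so there is no in-paper argument to compare yours against. Your proposal is therefore strictly more than the paper offers. The first half of it --- checking that $f\hatotimes a\mapsto\overline{t\mapsto\rho(a)f(t^{-1}D)}$ is a genuine asymptotic morphism into $\K(\cE)$ --- is correct and standard: compactness via $\rho(a)(D\pm ti)^{-1}=\rho(a)(D\pm i)^{-1}\cdot(D\pm i)(D\pm ti)^{-1}$ with the second factor bounded, the $O(t^{-1})$ resolvent--commutator estimate for $b$ in the dense subalgebra, and generation of $\mathcal{S}$ by $(x\pm i)^{-1}$ all go through (you should also say a word about the grading: $D$ is odd, so $f(t^{-1}D)$ has the parity of $f$, which is what makes the map graded on $\mathcal{S}\hatotimes A$). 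The role of $\Theta_\cE$ is likewise correctly identified as the stabilization $\K(\cE)\subset B\hatotimes\K(H)$ needed to land in $E(A,B)$.

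The weak point is exactly where you say it is. Your bridging formula does not parse: with $u_t(x)=x(1+t^{-2}x^2)^{-1/2}$ one gets $u_t(t^{-1}D)=t^{-1}D(1+t^{-4}D^2)^{-1/2}$, which is not an interpolation between $D$ and its bounded transform $F=D(1+D^2)^{-1/2}$, and the claimed ``straight-line homotopy'' between the unbounded-picture morphism and the Connes--Higson morphism built from $F$ is asserted rather than constructed. Since you then defer precisely this step to \cite[Section 8]{ConHig} --- the same reference the paper cites for the whole proposition --- your proposal should be read as a correct reduction of the statement to the known comparison of the Baaj--Julg and Connes--Higson pictures, not as a self-contained proof. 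Given that the paper itself treats the proposition as a citation, this is an acceptable level of rigour, but the rescaling homotopy is the one place where a real argument would still have to be supplied.
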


\section[The module structure]{The module structure on Connes' $K$-theory model}\label{sec:modulestructure}
We are now going to define the module structure. Recall the objects we have introduced so far: $\fol$ is a foliation and $\FC$ its foliated cone constructed with respect to some Riemannian metric on $M$. Furthermore, $E$ denotes a smooth hermitian vector bundle over $M$ and $\cE$ the associated Hilbert module.

If $D$ is any coefficient $C^*$-algebra and $g\in \kbar(\FC,D)$, then $g_t\in C(M)\hatotimes  D$ denotes the restriction of $g$ to $\{t\}\times M\subset\FC$. Furthermore, we may consider $g_t$ as an element of $\B(\cE)\hatotimes  D$ by Lemma \ref{lem:subofmultiplier}.

The main ingredient of the module structure 
 is the following asymptotic morphism.
\begin{thm}
For each coefficient $C^*$-algebra $D$, there is an asymptotic morphism
\begin{align*}
\mathfrak{m}_D:\;\kfrak(\FC,D)\hatotimes \K(\cE)&\to\mathfrak{A}(\K(\cE)\hatotimes_{\min} D)
\\\bar g\hatotimes  T&\mapsto \overline{t\mapsto g_t\cdot(T\hatotimes  1_{\tilde D})}.
\end{align*}
\end{thm}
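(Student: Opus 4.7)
The plan is to realise $\mathfrak{m}_D$ as the asymptotic product of two component $*$-homomorphisms whose images asymptotically commute, and then invoke the universal property of the maximal graded tensor product. Throughout I abbreviate $B:=\K(\cE)\hatotimes_{\min}D$, with multiplier algebra $\mathcal{M}(B)$.

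\emph{Well-definedness of the formula.} The chain of inclusions $C(M)\hatotimes D\subset\mathcal{M}(\K(\cE))\hatotimes D\subset\mathcal{M}(B)$ provided by Lemma \ref{lem:subofmultiplier} realises $g_t\in C(M)\hatotimes D$ as a multiplier of $B$. On an elementary tensor $g_t=h\hatotimes d$ one computes $g_t\cdot(T\hatotimes 1_{\tilde D})=hT\hatotimes d\in B$; by continuity this persists for all $g_t$, with $\|g_t\cdot(T\hatotimes 1_{\tilde D})\|\leq\|g\|_\infty\|T\|$. Since $g$ is continuous on $\FC$ and $M$ is compact, $t\mapsto g_t$ is norm-continuous into $C(M)\hatotimes D$, and hence $t\mapsto g_t\cdot(T\hatotimes 1_{\tilde D})$ lies in $C_b([1,\infty),B)$; if $g\in C_0(\FC,D)$ then $\|g_t\|\to 0$, so the image is in $C_0([1,\infty),B)$ and the formula descends to $\bar g\in\kfrak(\FC,D)$.

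\emph{Tensor product extension.} Consider the two $*$-homomorphisms
\[\alpha:\kfrak(\FC,D)\to\mathfrak{A}(\mathcal{M}(B)),\ \bar g\mapsto\overline{t\mapsto g_t},\qquad\beta:\K(\cE)\to\mathcal{M}(B)\subset\mathfrak{A}(\mathcal{M}(B)),\ T\mapsto T\hatotimes 1_{\tilde D},\]
the second viewed as a constant function of $t$. Granted that their images strictly commute inside $\mathfrak{A}(\mathcal{M}(B))$, i.e.\ that $[g_t,T\hatotimes 1_{\tilde D}]$ belongs to $C_0([1,\infty),\mathcal{M}(B))$, the universal property of the maximal graded tensor product produces a $*$-homomorphism $\kfrak(\FC,D)\hatotimes\K(\cE)\to\mathfrak{A}(\mathcal{M}(B))$ sending $\bar g\hatotimes T$ to $\alpha(\bar g)\cdot\beta(T)$. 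Compatibility with involutions is automatic from $(g^*)_t=(g_t)^*$ and $(T\hatotimes 1_{\tilde D})^*=T^*\hatotimes 1_{\tilde D}$, and the first paragraph guarantees that the image actually lies in $\mathfrak{A}(B)\subset\mathfrak{A}(\mathcal{M}(B))$, yielding the desired $\mathfrak{m}_D$.

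\emph{The commutator estimate (main obstacle).} It remains to show that $\|[g_t,T\hatotimes 1_{\tilde D}]\|_{\mathcal{M}(B)}\to 0$ as $t\to\infty$. By density and uniform norm bounds it suffices to check this when $T\in\K(\cE)$ is given by convolution with some $a\in C_c(G,r^*E\otimes s^*E^*)$ and $g=h\hatotimes d$ with $h\in\kbar(\FC)$, $d\in D$. The last part of Lemma \ref{lem:subofmultiplier} then identifies $[h_t,T]\in\K(\cE)$ with convolution by $(r^*h_t-s^*h_t)\cdot a$, so tensoring with $d$ gives a bound of the form
\[\|[g_t,T\hatotimes 1_{\tilde D}]\|\leq C_a\cdot\|(r^*h_t-s^*h_t)|_{\supp(a)}\|_\infty\cdot\|d\|\]
for a constant $C_a$ depending only on $\supp(a)$. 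The right-hand side tends to zero by Corollary \ref{commutationlimit}, completing the argument. This step, in which the vanishing variation of Higson functions meets the leafwise geometry of the holonomy groupoid via the length function, is the analytic heart of the construction; the other two paragraphs are formal manipulations with asymptotic morphisms.
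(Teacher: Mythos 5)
Your overall architecture matches the paper's proof of this theorem: two $*$-homomorphisms into $\mathfrak{A}(\B(\cE)\hatotimes_{\min}\tilde D)$ (one constant in $t$, one given by $\bar g\mapsto\overline{t\mapsto g_t}$), an asymptotic commutation lemma, and the universal property of the maximal graded tensor product. The reduction of $T$ to a convolution operator with kernel $a\in C_c(G,r^*E\otimes s^*E^*)$ is legitimate and is exactly what the paper does (with the small additional point that $a$ should be taken supported in a single coordinate chart, so that its pointwise norm is a genuine element of $C_c(G)$ whose reduced norm supplies your constant $C_a$).

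There is, however, a genuine gap in the commutator estimate, namely the reduction of $g$ to elementary tensors $h\hatotimes d$ with $h\in\kbar(\FC)$ and $d\in D$. The set of $g$ for which $\lVert[g_t,T\hatotimes 1_{\tilde D}]\rVert\to0$ is indeed a closed subspace, but the algebraic tensor product $\kbar(\FC)\odot D$ is \emph{not} dense in $\kbar(\FC,D)$ for infinite-dimensional $D$, not even modulo $C_0(\FC,D)$: any element of the closure of $\kbar(\FC)\odot D+C_0(\FC,D)$ has asymptotically totally bounded range in $D$, whereas for instance $g(t,x):=\bigl(s\mapsto e^{is\log(1+t)}\bigr)$ defines an element of $\kbar(\FC,C[0,1])$ (it depends only on $t$ and has vanishing variation since transverse blow-up only increases distances) whose range $\{s\mapsto e^{i\lambda s}\}$ is not equicontinuous, hence not totally bounded. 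So your argument proves asymptotic commutation only on the proper sub-$C^*$-algebra generated by elementary tensors. The paper's Lemma \ref{asymptoticcommutation} handles arbitrary $D$-valued $g$ directly: it fixes a faithful representation $\rho:\tilde D\to\B(H')$, realizes $\K(\cE)\hatotimes_{\min}\tilde D$ on $\bigoplus_{x}L^2(G_x,r^*E)\hatotimes H'$, identifies $[g_t,T\hatotimes 1_{\tilde D}]$ with convolution by the $D$-valued kernel $(r^*g_t-s^*g_t)\cdot a$, and bounds its norm by $\varepsilon_t\cdot\lVert\pi_x(\lvert a\rvert)\rVert$ with $\varepsilon_t:=\lVert(s^*g_t-r^*g_t)|_{\supp a}\rVert\to0$ by Corollary \ref{commutationlimit}. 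This pointwise estimate is also precisely where the \emph{minimal} tensor product becomes essential (the paper leaves open whether the maximal analogue holds), so the gap sits at the analytic heart of the construction; the repair is to replace your density reduction by the paper's direct estimate, of which your elementary-tensor bound is the special case $g=h\hatotimes d$.
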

\begin{proof}
Denote by $\tilde D$ the unitalization of $D$.
There is an obvious inclusion 
\[\alpha_{\max}:\,\K(\cE)\to\mathfrak{A}(\B(\cE)\hatotimes \tilde D)\]
as constant functions.
Furthermore,  the composition
\begin{align*}
\kbar(\FC,D)&\subset C_b([0,\infty)\times M,D)=C_b([0,\infty),C(M)\hatotimes  D)
\\&\subset C_b([0,\infty),\B(\cE)\hatotimes \tilde D)
\twoheadrightarrow \mathfrak{A}(\B(\cE)\hatotimes \tilde D)
\end{align*}
obviously descends to give a $*$-homomorphism
\[\beta_{\max}:\,\kfrak(\FC,D)\to\mathfrak{A}(\B(\cE)\hatotimes \tilde D).\]
Let $\alpha,\beta$ be obtained from $\alpha_{\max},\beta_{\max}$ by passing from the maximal tensor product to the minimal tensor product $\B(\cE)\hatotimes_{\min}\tilde D$.

In the following lemma, the vanishing variation of $g$ enters the game:
\begin{lem}\label{asymptoticcommutation}
For all $T\in\K(\cE)$ and $g\in\kbar(\FC,D)$, the commutator $[\beta(\bar g),\alpha(T)]\in\mathfrak{A}(\B(\cE)\hatotimes_{\min}\tilde D)$ vanishes.
\end{lem}
\begin{proof}
Let $\rho:\tilde D\to \B(H')$ be a faithful representation. The minimal tensor product $\K(\cE)\hatotimes _{\min} \tilde D$ may be defined as the image of the $*$-homomorphism
\[\pi\hatotimes \rho:\,\K(\cE)\hatotimes  \tilde D\to\B\left(\bigoplus_{x\in M}L^2(G_x,r^*E)\hatotimes  H'\right).\]
Lemma \ref{lem:subofmultiplier} implies
\begin{align*}
(\pi\hatotimes \rho)(g_t\cdot(T\hatotimes  1_{\tilde D}))&=(\tau\hatotimes \rho)(g_t)\cdot(\pi\hatotimes \rho)(T\hatotimes  1_{\tilde D}),
\\(\pi\hatotimes \rho)((T\hatotimes  1_{\tilde D})\cdot g_t)&=(\pi\hatotimes \rho)(T\hatotimes  1_{\tilde D})\cdot (\tau\hatotimes \rho)(g_t).
\end{align*}
By definition of $\tau$, the operator $(\tau\hatotimes \rho)(g_t)$ acts on each $L^2(G_x,r^*E)\hatotimes  H'\cong L^2(G_x,r^*E\hatotimes  H')$ by multiplication with $r^*(\rho\circ g_t)\in C(G_x,\B(H'))$.

We may assume without loss of generality that the operator $T\in \K(\cE)$ acts on 
$C_c(G,r^*E) \subset \cE$ by convolution with some $a\in C_c(G,r^*E\hatotimes  s^*E^*)$ supported in a compact subset $K$ of some coordinate chart of $G$.
According to Corollary \ref{commutationlimit}, the norms
\[\varepsilon_t:=\left\|(s^*g_t-r^*g_t)|_K\right\|_{C(K,D)}\]
tend to zero as $t\to\infty$. For $\xi\in L^2(G_x,r^*E)\hatotimes  H'\cong L^2(G_x,r^*E\hatotimes  H')$ we may now calculate:
\begin{align*}
\|(\pi&\hatotimes \rho)([g_t,T\hatotimes  1_{\tilde D}])(\xi)\|_{L^2(G_x,r^*E)\hatotimes  H'}^2=
\\&=\|[(\tau\hatotimes \rho)(g_t),\,(\pi\hatotimes \rho)(T\hatotimes 1_{\tilde D})](\xi)\|_{L^2(G_x,r^*E)\hatotimes  H'}^2=
\\&=\int_{\gamma\in G_x}\left\|(\id_{E_{r(\gamma)}}\hatotimes \rho(g_t(r(\gamma)))\int_{\gamma_1\gamma_2=\gamma}(a(\gamma_1)\hatotimes  \id_{H'}) \xi(\gamma_2)-\right.
\\&\phantom{=\int_{\gamma\in G_x}\left\|\vphantom{\int}\right.}
\left.-\int_{\gamma_1\gamma_2=\gamma}(a(\gamma_1)\hatotimes  \id_{H'}) (\id_{E_{r(\gamma_2)}}\hatotimes \rho(g_t(r(\gamma_2)))  \xi(\gamma_2)\right\|_{E_{r(\gamma)}\hatotimes  H'}^2
\\&=\int_{\gamma\in G_x}\left\|\int_{\gamma_1\gamma_2=\gamma}(a(\gamma_1)\hatotimes  \rho(r^*g_t-s^*g_t)(\gamma_1)) \xi(\gamma_2)\right\|_{E_{r(\gamma)}\hatotimes  H'}^2
\\&\leq\int_{\gamma\in G_x}\left(\int_{\gamma_1\gamma_2=\gamma}\|(r^*g_t-s^*g_t)(\gamma_1)\|_{D}\cdot \|a(\gamma_1)\|_{(r^*E\hatotimes  s^*E^*)_{\gamma_1}}
\cdot \|\xi(\gamma_2)\|_{E_{r(\gamma_2)}\hatotimes  H'}\right)^2
\\&\leq\varepsilon_t^2\cdot \int_{\gamma\in G_x}\left(\int_{\gamma_1\gamma_2=\gamma}\|a(\gamma_1)\|_{(r^*E\hatotimes  s^*E^*)_{\gamma_1}}\cdot\|\xi(\gamma_2)\|_{E_{r(\gamma_2)}\hatotimes  H'}\right)^2
\\&=\varepsilon_t^2\cdot\left\|\pi_x(\!\left\bracevert\!a\!\right\bracevert\!)^2\left(\|\xi\|_{r^*E\hatotimes  H'}\right)\right\|^2\leq \varepsilon_t^2\cdot\|\pi_x(\!\left\bracevert\!a\!\right\bracevert\!)\|^2\cdot\|\xi\|_{L^2(G_x,r^*E)\hatotimes  H'}^2
\end{align*}
Here, $\!\left\bracevert\!a\!\right\bracevert\!\in C_c(G)$ denotes the point-wise norm of $a\in C_c(G,r^*E\hatotimes  s^*E^*)$.
It is a function in $C_c(G)$, because we have assumed that $a$ is supported in some coordinate chart and is therefore continuous in the usual sense.
Thus, the inequality
\[\|(\pi_x\hatotimes \rho)([g_t,T\hatotimes  1_{\tilde D}])\|\leq \varepsilon_t\cdot\|\pi_x(\!\left\bracevert\!a\!\right\bracevert\!)\|\]
holds for all $x\in M$.
Taking the supremum over all $x\in M$, we see that the norm of the commutator 
 \[[g_t,T\hatotimes 1_{\tilde D}]\in \K(\cE)\hatotimes_{\min} D\subset\mathfrak{B}\left(\bigoplus_{x\in M}L^2(G_x,r^*E)\hatotimes  H'\right)\]
is bounded  by $\varepsilon_t$ times the norm of $\!\left\bracevert\!a\!\right\bracevert\!\in \FA$ and thus tends to zero for $t\to\infty$. This implies $[\beta(g),\alpha(f)]=0$ in $\mathfrak{A}(\B(\cE)\hatotimes_{\min}\tilde D)$.
\end{proof}
\begin{question}
This proof uses an explicit calculation for the minimal tensor product. Nevertheless, one can ask whether the analogous statement still holds for the commutator of $\alpha_{\max},\beta_{\max}$.
\end{question}
According to this lemma, $\alpha$ and $\beta$ combine to a $*$-homomorphism
\[\mathfrak{m}_D:\,\kfrak(\FC,D)\hatotimes  \K(\cE)\to\mathfrak{A}( \B(\cE)\hatotimes _{\min}\tilde D).\]
Its image is contained in $\mathfrak{A}(\K(\cE)\hatotimes _{\min} D)$ and indeed it maps elementary tensors $\bar g\hatotimes  T$ as claimed. 
\end{proof}

From now on we will always assume that the coefficient algebras are nuclear to avoid mixing up minimal and maximal tensor products. \begin{thm}\label{thm:associativityproof}
Let $D_1,D_2$ be nuclear $C^*$-algebras. Then, in the asymptotic category,
\[(\llbracket \mathfrak{m}_{D_1}\rrbracket\hatotimes  1_{D_2})\circ(1_{\kfrak(\FC,D_1)}\hatotimes \llbracket \mathfrak{m}_{D_2}\rrbracket)
=\llbracket\mathfrak{m}_{D_1\hatotimes  D_2}\rrbracket\circ(\llbracket\nabla\rrbracket\hatotimes  1_{\K(\cE)}).\]
\end{thm}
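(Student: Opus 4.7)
The idea is to exhibit an explicit $2$-homotopy. Unpacking via the composition formula of Proposition~\ref{prop:asymptoticcomposition}, on an elementary tensor $\bar g_1 \hatotimes \bar g_2 \hatotimes T$ the LHS is represented at the $\mathfrak{A}^2$-level by
\[
\overline{t\mapsto\overline{s\mapsto (g_1)_s\cdot(g_2)_t\cdot(T\hatotimes 1)}},
\]
while the RHS is represented at the $\mathfrak{A}$-level by $\overline{t\mapsto (g_1)_t\cdot(g_2)_t\cdot(T\hatotimes 1)}$, with the product notation of Lemma~\ref{lem:subofmultiplier}. Writing $C:=\K(\cE)\hatotimes D_1\hatotimes D_2$, the task reduces to constructing a $*$-homomorphism into $\mathfrak{A}^2(C[0,1])$ whose evaluations at $\alpha=0,1$ give the LHS and the inclusion of the RHS into $\mathfrak{A}^2$ as an element constant in the inner variable $s$; since the two natural transformations $\mathfrak{A}\to\mathfrak{A}^2$ agree in $\llbracket\,\cdot\,,\,\cdot\,\rrbracket_\infty$, this suffices.

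\textbf{The homotopy.} I would set
\[
\Phi\bigl(\bar g_1\hatotimes \bar g_2\hatotimes T\bigr)(\alpha)\;:=\;\overline{t\mapsto\overline{s\mapsto (g_1)_{(1-\alpha)s+\alpha t}\cdot(g_2)_t\cdot(T\hatotimes 1)}},
\]
which at $\alpha=0$ reproduces the LHS and at $\alpha=1$ produces the desired inner-constant inclusion of the RHS. Three properties must be verified, all hinging on the Lipschitz bound $|(1-\alpha)s+\alpha t-(1-\alpha)s'-\alpha t'|\leq |s-s'|+|t-t'|$ being uniform in $\alpha\in[0,1]$ together with $(1-\alpha)s+\alpha t\in[\min(s,t),\max(s,t)]$: first, joint continuity in $(t,s)$ of the defining map into $C([0,1],C)$, which follows from the uniform Lipschitz bound and the continuity of $g_1$; second, independence modulo $C_0(\FC,D_i)$-representatives, because changing a representative alters $\Phi$ by an element whose $\mathfrak{A}^2(C[0,1])$-norm vanishes in the iterated limit $\lim_t\limsup_s$ (the argument $(1-\alpha)s+\alpha t$ tends to infinity uniformly in $\alpha$ as $s\to\infty$); and third, multiplicativity on the maximal tensor product, which reduces to a uniform-in-$\alpha$ version of Lemma~\ref{asymptoticcommutation}. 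For this last point, the commutator bound $\|[g_\tau,T\hatotimes 1]\|\leq\varepsilon_\tau\cdot C_T$ from the proof of that lemma (with $\varepsilon_\tau\to 0$ as $\tau\to\infty$ and a constant $C_T$ depending only on $T$) gives
\[
\sup_{\alpha\in[0,1]}\varepsilon_{(1-\alpha)s+\alpha t}\;\leq\;\sup_{\tau\geq\min(s,t)}\varepsilon_\tau,
\]
and the right-hand side tends to zero in the iterated limit $\lim_t\limsup_s$.

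\textbf{Main obstacle.} The subtle conceptual point is that the homotopy must be encoded as a \emph{single} $*$-homomorphism $\Phi$ valued in $\mathfrak{A}^2(C[0,1])$, and not as a family $\{\Phi_\alpha\}_{\alpha\in[0,1]}$ of $*$-homomorphisms into $\mathfrak{A}^2(C)$ continuous in $\alpha$ in the $\mathfrak{A}^2(C)$-norm. The latter attempt fails: the shift $|(1-\alpha)s+\alpha t-(1-\alpha')s-\alpha' t|=|\alpha-\alpha'|\cdot|s-t|$ is unbounded as $s\to\infty$, and vanishing variation of $g_1$ controls only bounded $\FC$-shifts. Once $[0,1]$ is absorbed into the coefficient algebra, however, continuity in $\alpha$ is demanded only pointwise in $(t,s)$, which is automatic from the continuity of $g_1$, and everything goes through.
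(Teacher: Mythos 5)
Your proposal is correct and takes essentially the same route as the paper: the paper's $2$-homotopy interpolates the \emph{second} factor via $g_{rs+(1-r)t}$ (so that at the endpoint both factors sit at $s$, giving the outer-constant inclusion of the right-hand side into $\mathfrak{A}^2$), whereas you interpolate the \emph{first} factor towards $t$ (the inner-constant inclusion); these are mirror images, both resting on the agreement of the two natural transformations $\mathfrak{A}\to\mathfrak{A}^2$, and your commutator and well-definedness estimates via $\sup_{\alpha}\varepsilon_{(1-\alpha)s+\alpha t}\leq\sup_{\tau\geq\min(s,t)}\varepsilon_\tau$ match the paper's $\limsup_t\limsup_s\sup_r$ computation. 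The one point to tighten is your first verification: what is needed is continuity of $t\mapsto\overline{s\mapsto[\alpha\mapsto(g_1)_{(1-\alpha)s+\alpha t}\cdots]}$ into $\mathfrak{A}(C[0,1])$, a $\limsup_s$-condition rather than joint continuity into $C([0,1],C)$, and it requires \emph{uniform} continuity of $g_1$ -- which the paper derives from vanishing variation in a dedicated small lemma, and which is exactly the ``bounded shift'' control you correctly invoke in your final paragraph (note also that $(1-\alpha)s+\alpha t$ does \emph{not} tend to infinity uniformly in $\alpha$ as $s\to\infty$; it is bounded below by $\min(s,t)$, which suffices only in the iterated limit).
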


Recall that the $*$-homomorphism
\[\nabla:\,\kfrak(\FC,D_1)\hatotimes \kfrak(\FC,D_2)\to\kfrak(\FC,D_1\hatotimes  D_2)\]
is defined by multiplication of functions.

\begin{proof}
The left hand side is represented by the $2$-homotopy class of
\begin{align*}
\kfrak(\FC,D_1)&\hatotimes \kfrak(\FC,D_2)\hatotimes  \K(\cE)\to\mathfrak{A}^2(\K(\cE)\hatotimes  D_1\hatotimes  D_2)
\\\bar f\hatotimes \bar g\hatotimes  T&\mapsto\overline{t\mapsto \overline{s\mapsto   (f_s\hatotimes  g_t)\cdot (T\hatotimes  1_{\tilde D_1}\hatotimes 1_{\tilde D_2}) }},
\end{align*}
while the right hand side is represented by the $1$-homotopy class of
\begin{align*}
\kfrak(\FC,D_1)&\hatotimes \kfrak(\FC,D_2)\hatotimes  \K(\cE)\to\mathfrak{A}(\K(\cE)\hatotimes  D_1\hatotimes  D_2)
\\\bar f\hatotimes \bar g\hatotimes  T&\mapsto \overline{s\mapsto (f_s\hatotimes  g_s)\cdot (T\hatotimes  1_{\tilde D_1}\hatotimes  1_{\tilde D_2})}.
\end{align*}
In these formulas, we have, of course, interpreted $f_s\hatotimes  g_t$ and $f_s\hatotimes  g_s$ as elements of 
$C(M)\hatotimes  D_1\hatotimes  D_2\subset\B(\cE)\hatotimes D_1\hatotimes  D_2$.

Similar to the definition of $\mathfrak{m}$, we construct a  2-homotopy 
\[
\kfrak(\FC,D_1)\hatotimes \kfrak(\FC,D_2)\hatotimes  \K(\cE)\to\mathfrak{A}^2((\K(\cE)\hatotimes  D_1\hatotimes  D_2)[0,1]).\]
There are three $*$-homomorphisms:
\begin{align*}
\tilde\alpha:\,\K(\cE)&\to \mathfrak{A}^2((\B(\cE)\hatotimes \tilde D_1\hatotimes \tilde D_2)[0,1])
\\T&\mapsto \overline{t\mapsto\overline{s\mapsto [r\mapsto T\hatotimes 1_{\tilde D_1}\hatotimes  1_{\tilde D_2}]  }}
\\\tilde\beta:\,\kbar(\FC,D_2)&\to \mathfrak{A}^2((\B(\cE)\hatotimes \tilde D_1\hatotimes \tilde D_2)[0,1])
\\g&\mapsto\overline{t\mapsto\overline{s\mapsto [r\mapsto   g_{rs+(1-r)t}\hatotimes 1_{\tilde D_1}]}}
\\\tilde\gamma:\,\kbar(\FC,D_1)&\to \mathfrak{A}^2((\B(\cE)\hatotimes \tilde D_1\hatotimes \tilde D_2)[0,1])
\\f&\mapsto\overline{t\mapsto\overline{s\mapsto [r\mapsto f_s\hatotimes 1_{\tilde D_2}   ]  }}
\end{align*}
There is only one non-trivial property to be verified here, namely the continuity of
\begin{align*}
\phi:\,[1,\infty)&\to \mathfrak{A}((\B(\cE)\hatotimes \tilde D_1\hatotimes \tilde D_2)[0,1])
\\t&\mapsto\overline{s\mapsto [r\mapsto   g_{rs+(1-r)t}\hatotimes 1_{\tilde D_1}   ]  }
\end{align*}
in the definition of $\tilde\beta$. 
This is a consequence of the following Lemma.
\begin{lem}
Let $X$ and $Y$ be metric spaces, $X$ complete. If $g\in C_b(X,Y)$ has vanishing variation, then it is uniformly continuous.
\end{lem}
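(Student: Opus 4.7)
The plan is to let vanishing variation do the heavy lifting: the oscillation of $g$ at any fixed scale is negligible outside a compact exhausting piece of $X$, and on that compact piece continuity is automatically uniform.

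More precisely, given $\varepsilon > 0$, I would take $R = 1$ and invoke vanishing variation to find a compact $K \subset X$ with $\operatorname{Var}_1 g(x) < \varepsilon$ for every $x \in X \setminus K$. Continuity of $g$ on the compact set $K$ then yields $\delta \in (0,1]$ such that for $x, y \in K$ with $d_X(x,y) < \delta$ one has $d_Y(g(x),g(y)) < \varepsilon$. For arbitrary $x, y \in X$ with $d_X(x,y) < \delta$ I would split into cases: if both $x, y \in K$, use the uniform continuity of $g|_K$ just obtained; otherwise at least one of them, say $x$, lies outside $K$, and since $d_X(x,y) \le 1$ the estimate $d_Y(g(x),g(y)) \le \operatorname{Var}_1 g(x) < \varepsilon$ applies directly (symmetric case if $y \notin K$). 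In every case $d_Y(g(x),g(y)) < \varepsilon$, which is exactly uniform continuity.

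The only real subtlety, and where completeness of $X$ enters, is the first step: one has to interpret ``$\operatorname{Var}_1 g$ vanishes at infinity'' in such a way as to actually produce a compact $K$. In the proper-metric context of the rest of the paper this is the standard notion and the compact $K$ is immediate; completeness of $X$ is precisely what is needed to upgrade the natural ``small outside a totally bounded set'' reading to ``small outside a compact set,'' by passing to closures. Once that interpretation is fixed, the remaining case-split and estimates above are routine, and there is no further obstacle.
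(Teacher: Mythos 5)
Your proof is correct and follows essentially the same route as the paper's: extract a compact $K$ from the vanishing of $\operatorname{Var}_1 g$ at infinity, use automatic uniform continuity of $g$ on the compact set $K$, and combine the two regimes via the case split. Your version merely spells out the case analysis and the interpretation of ``vanishes at infinity'' that the paper leaves implicit.
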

\begin{proof}
Let $\varepsilon>0$. Because of vanishing variation there is a compact subset $K\subset X$ such that
\[(x\notin K\vee y\notin K)\wedge d(x,y)<1\quad\Rightarrow \quad d(g(x),g(y))<\varepsilon.\]
On the compact set $K$ however,  uniform continuity is automatic. 
Combining these features, the claim follows.
\end{proof}
Using this lemma and the distance estimate  
\[d(\,(rs+(1-r)t,x)\,,\,(rs+(1-r)t',x)\,)\leq |t-t'|\]
 in $\FC$, we conclude 
\begin{align*}
\|\phi(t)-\phi(t')\|&=\left\|  \overline{s\mapsto [r\mapsto   ( g_{rs+(1-r)t}- g_{rs+(1-r)t'}) \hatotimes  1_{\tilde D_2}  ]  }  \right\|
\\&\leq\sup_{s\in [0,\infty),r\in[0,1],x\in M}\left\|g(rs+(1-r)t,x) -  g(rs+(1-r)t',x) \right\|
\\&\xrightarrow{t'\to t}0.
\end{align*}

Now, $\tilde\beta$ and $\tilde\gamma$ factor through $\kfrak(\FC,D_2)$ and $\kfrak(\FC,D_1)$, respectively, and  their images commute with each other.
Furthermore, their images commute with the image of $\tilde\alpha$: The vanishing of $[\tilde\gamma(f),\tilde\alpha(T)]$ is completely analogous to Lemma \ref{asymptoticcommutation}. For the vanishing of $[\tilde\beta(g),\tilde\alpha(T)]$ we assume that $T\in \K(\cE)$ acts on $C_c(G,r^*E)$ by convolution with an $a\in C_c(G,r^*E\hatotimes  s^*E^*)$ compactly supported in some coordinate chart of $G$ and calculate
\begin{align*}
\|[\tilde\beta(g),\tilde\alpha(T)]\|
&=\limsup_{t\to\infty}\limsup_{s\to\infty}\sup_{r\in[0,1]}\|\,[ g_{rs+(1-r)t},T\hatotimes  1_{\tilde D_2}]\,\|
\\&\leq \limsup_{t\to\infty}\limsup_{s\to\infty}\sup_{r\in[0,1]}\varepsilon_{rs+(1-r)t}\cdot\|\!\left\bracevert\!a\!\right\bracevert\!\|_r=0,
\end{align*}
where $\varepsilon_{t}$, $\!\left\bracevert\!a\!\right\bracevert\!\in C_c(G)$ and the inequality are analogous to the ones in the proof of Lemma \ref{asymptoticcommutation}.

Thus, $\tilde\alpha$, $\tilde\beta$, $\tilde\gamma$ combine to a $2$-homotopy 
\[\kfrak(\FC,D_1)\hatotimes \kfrak(\FC,D_2)\hatotimes  \K(\cE)\to\mathfrak{A}^2((\B(\cE)\hatotimes \tilde D_1\hatotimes \tilde D_2)[0,1]).\]
The image is obviously contained in $\mathfrak{A}^2((\K(\cE)\hatotimes  D_1\hatotimes  D_2)[0,1])$ so it is actually  a  2-homotopy
\[\kfrak(\FC,D_1)\hatotimes \kfrak(\FC,D_2)\hatotimes  \K(\cE)\to\mathfrak{A}^2((\K(\cE)\hatotimes  D_1\hatotimes  D_2)[0,1]).\]
Evaluation at $0$ yields the representative of 
\[(\llbracket \mathfrak{m}_{D_1}\rrbracket\hatotimes  1_{D_2})\circ(1_{\kfrak(\FC,D_1)}\hatotimes \llbracket \mathfrak{m}_{D_2}\rrbracket)\]
while evaluation at $1$ is equivalent to the representative of 
\[\llbracket\mathfrak{m}_{D_1\hatotimes  D_2}\rrbracket\circ([\nabla]\hatotimes  1_{\K(\cE)}).\]
\end{proof}

One of our main results is now an easy corollary:
\begin{cor}[(cf.\ {\cite[Conjecture 0.2]{RoeFoliations}})]\label{cor:modulemultiplication}
The group $K_{*}(\K(\cE))$ is a module over $K_{FJ}^{-*}(M/\mathcal{F})$. The multiplication is 
\[K_{FJ}^{-i}(M/\mathcal{F})\otimes K_{j}(\K(\cE))\to K_{i+j}(\cfrak(\FC)\hatotimes \K(\cE))\xrightarrow{\llbracket\mathfrak{m}_{\K}\rrbracket\circ}K_{i+j}(\K(\cE)).\]
In particular, $K^*_C(M/\mathcal{F})$ is a module over $K_{FJ}^*(M/\mathcal{F})$.
\end{cor}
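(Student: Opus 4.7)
The plan is to verify the module axioms for the displayed pairing by applying Theorem \ref{thm:associativityproof} in the special case $D_1=D_2=\K$. The construction itself is a two-step composition: the exterior product in $K$-theory followed by the homomorphism induced by the asymptotic morphism $\mathfrak{m}_{\K}$ (via Theorems \ref{thm:EFunctoriality} and \ref{thm:EStability}, which together provide the canonical map $K_{i+j}(\cfrak(\FC)\hatotimes\K(\cE))\to K_{i+j}(\K(\cE)\hatotimes\K)\cong K_{i+j}(\K(\cE))$). Both steps are additive and natural, so the resulting pairing is bilinear.

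To check associativity -- the only nontrivial module axiom -- take $x,y\in K_*(\cfrak(\FC))$ and $z\in K_*(\K(\cE))$. Theorem \ref{thm:associativityproof}, specialized to $D_1=D_2=\K$ and combined with the standard identification $\K\hatotimes\K\cong\K$, gives an equality of morphisms $\cfrak(\FC)\hatotimes\cfrak(\FC)\hatotimes\K(\cE)\to\K(\cE)$ in the asymptotic category. Passing to $K$-theory and evaluating on $x\times y\times z$, the right-hand side $\llbracket\mathfrak{m}_{\K\hatotimes\K}\rrbracket\circ(\llbracket\nabla\rrbracket\hatotimes 1_{\K(\cE)})$ yields $(x\cdot y)\cdot z$, because $\nabla_*$ composed with the exterior product is by Definition \ref{def:coronaproduct} precisely the ring multiplication of $K^*_{FJ}$. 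The left-hand side $(\llbracket\mathfrak{m}_{\K}\rrbracket\hatotimes 1_{\K})\circ(1_{\cfrak(\FC)}\hatotimes\llbracket\mathfrak{m}_{\K}\rrbracket)$ unfolds, by naturality of the exterior product, first to $x\times(y\cdot z)$ after the inner application of $\mathfrak{m}_{\K}$ on the second and third tensor slots, and then to $x\cdot(y\cdot z)$ after the outer one, yielding $(x\cdot y)\cdot z=x\cdot(y\cdot z)$ as required.

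The only subtle point I foresee is bookkeeping for the several stability identifications $\K\hatotimes\K\cong\K$ used in this computation (in the ring product, in the target of $\mathfrak{m}_{\K\hatotimes\K}$, and in reducing $\K(\cE)\hatotimes\K\cong_{\mathrm{K}} \K(\cE)$ at the level of $K$-theory). These are handled uniformly by the Hilbert's hotel argument recalled in Section \ref{sec:Etheory}, which guarantees that the induced maps on $K$-theory are independent of the chosen isometries; the conceptual content is entirely in Theorem \ref{thm:associativityproof}.

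For the last sentence of the corollary, I would specialize the construction to the trivial line bundle $E=\underline{\C}\to M$. In that case the associated Hilbert module $\cE$ is $\FA$ viewed as a right Hilbert module over itself, so $\K(\cE)=\FA$, and the general statement just proved delivers the module structure of $K^*_C(M/\cF)=K_{-*}(\FA)$ over $K^*_{FJ}(M/\cF)$.
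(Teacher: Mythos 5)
Your proposal is correct and follows essentially the same route as the paper: associativity is obtained by specializing Theorem \ref{thm:associativityproof} to $D_1=D_2=\K$ and passing to $K$-theory, and the special case $K^*_C(M/\cF)$ comes from taking $E$ to be the trivial line bundle so that $\K(\cE)=\FA$. The paper's proof is just a terser version of your argument, leaving the unfolding of the two sides of Theorem \ref{thm:associativityproof} and the stability bookkeeping implicit.
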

\begin{proof}
Associativity of the module multiplication is a direct consequence of Theorem \ref{thm:associativityproof} applied with $D_1=D_2=\K$.
If $E=\C\times M$ is the trivial one dimensional bundle, then $\K(\cE)=\FA$. This implies the special case mentioned, as $K^*_C(M/\mathcal{F})=K_{-*}(\FA)$ by definition.
\end{proof}

\begin{lem}
The Morita equivalence isomorphism
\[K_*(\K(\cE))\cong K_*(\FA)=K^{-*}_C(M/\mathcal{F})\]
provided by Corollary \ref{cor:bundlemorita}
is a module isomorphism.
\end{lem}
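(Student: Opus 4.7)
The strategy is to factor the Morita equivalence $\Theta_\cE$ through the inclusion \eqref{eq:Moritainclusion} and matrix stabilization, and to verify that both of these are compatible with the $C(M)$-multiplier actions that underlie the module structure. Since Corollary \ref{cor:bundlemorita} already gives invertibility of $\Theta_\cE$, it suffices to show that $\Theta_\cE$ is a module homomorphism.

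The key observation is that the inclusion
\[
\iota:\K(\cE)=p\MA_{I,J}(\FA)p\;\xrightarrow{\;\subset\;}\;\MA_{I,J}(\FA)
\]
intertwines the canonical $C(M)$-multiplier actions. Indeed, the defining projection $p\in\MA_{I,J}(C(M))$ has entries in the commutative $C^*$-algebra $C(M)\subset\mathcal{M}(\FA)$, hence commutes with $g\cdot I_{I+J}$ inside $\MA_{I,J}(\mathcal{M}(\FA))$ for every $g\in C(M)$, so
\[
\iota(gT)=g\cdot pT'p=p\cdot gT'\cdot p=g\cdot\iota(T),\qquad \iota(Tg)=\iota(T)\cdot g
\]
for any $T=pT'p\in\K(\cE)$. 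The corner inclusion $\jmath:\FA\hookrightarrow\MA_{I,J}(\FA)$ (as the even $(1,1)$-entry, say) is trivially a $C(M)$-multiplier morphism as well.

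Since the asymptotic morphism $\mathfrak{m}_\K^\cE$ of Corollary \ref{cor:modulemultiplication} is given on elementary tensors by $\bar g\hatotimes T\mapsto\overline{t\mapsto g_t\cdot(T\hatotimes 1_{\tilde\K})}$, and hence only uses the $C(M)$-multiplier action on $\K(\cE)$, the just-established compatibility implies that the diagram
\[
\xymatrix{
\cfrak(\FC)\hatotimes \K(\cE) \ar[rr]^-{\mathfrak{m}_\K^\cE} \ar[d]_{1\hatotimes \iota} && \mathfrak{A}(\K(\cE)\hatotimes \K) \ar[d]^{\mathfrak{A}(\iota\hatotimes 1_\K)} \\
\cfrak(\FC)\hatotimes \MA_{I,J}(\FA) \ar[rr]_-{\mathfrak{m}_\K^{(\FA)^{I,J}}} && \mathfrak{A}(\MA_{I,J}(\FA)\hatotimes \K)
}
\]
and its analogue with $\jmath$ in place of $\iota$ both commute strictly on elementary tensors, and hence everywhere. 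Passing to $K$-theory yields that $\iota_*$ and $\jmath_*$ are both $K^*_{FJ}(M/\cF)$-module homomorphisms.

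Finally, taking the isometry $V:\cE\hookrightarrow(\FA)^{I,J}\hookrightarrow\FA\hatotimes\widehat\ell^2$ in Theorem \ref{thm:Morita} shows that $\Theta_\cE$ is represented by $\iota$ composed with the canonical stabilization of $\MA_{I,J}(\FA)$, which on $K$-theory identifies $\Theta_\cE$ with $\jmath_*^{-1}\circ\iota_*$; this is therefore a module isomorphism. The main obstacle is purely bookkeeping in the matrix algebra $\MA_{I,J}(\FA)$ to establish the $C(M)$-multiplier compatibility of $\iota$; once this is in hand, the rest of the argument is formal.
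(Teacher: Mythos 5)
Your proposal is correct and follows essentially the same route as the paper, whose entire proof is the observation that the inclusion $\K(\cE)\subset\MA_{I,J}(\C)\hatotimes\FA$ inducing the Morita isomorphism commutes with the asymptotic morphisms $\mathfrak{m}_{\K}$ on both sides. You have merely made explicit the two points the paper leaves as ``obvious'': that $p$ commutes with the diagonal $C(M)$-multiplier action, and that the corner inclusion $\FA\hookrightarrow\MA_{I,J}(\FA)$ handles the stabilization bookkeeping.
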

\begin{proof}
The inclusion $\K(\cE)\subset \MA_{I,J}(\C)\hatotimes \FA$, which induces this isomorphism in $K$-theory, obviously commutes with the asymptotic morphisms $\mathfrak{m}_{\K}$ associated to $\K(\cE)$ and $\MA_{I,J}(\C)\hatotimes \FA$.
\end{proof}

More general multiplications involving arbitrary coefficient $C^*$-algebras can be derived  similarly from Theorem \ref{thm:associativityproof}.

\section{Longitudinal index theory}\label{sec:longindtheory}
This section is a very short introduction to longitudinal index theory. For more details we refer primarily to \cite[Section 8.2]{Kordyukov}, but of course also to \cite{ConSurvey,ConNCG,ConSka}.

\begin{mdef}[({cf.\ \cite[Section 2.9]{ConNCG}})]
Let $\fol$ be a foliated mani\-fold, $E\to M$ a $\Z_2$-graded smooth vector bundle and $D:C^\infty(M,E)\to C^\infty(M,E)$ a  first order symmetric differential operator of grading degree one. The operator $D$ is called \emph{longitudinally elliptic} if it restricts to the leaves $L_x$ of the foliation and the restricted operators 
\[D_{L_x}:C_{c}^\infty(L_x,E|_{L_x})\to C_{c}^\infty(L_x,E|_{L_x})\]
are elliptic.
\end{mdef}

A special case are longitudinal Dirac type operators. Assume that $M$ is equipped with a Riemannian metric and  $E\to M$ is a $\Z_2$-graded, smooth, hermitian vector bundle equipped with a Clifford action of $T\mathcal{F}$ and a compatible connection $\nabla$. The associated Dirac operator is defined locally by the usual formula
\[D=\sum_{i=1}^{\dim\mathcal{F}}e_i\nabla_{e_i},\]
where $e_1,\dots,e_{\dim\mathcal{F}}$ is any local orthonormal frame of $T\mathcal{F}$.

If the bundle $T\mathcal{F}$ carries a $\operatorname{spin}^c$-structure, we obtain the longitudinal $\operatorname{spin}^c$-Dirac operator $\slashed{D}$ by choosing $E$ to be the corresponding spinor bundle. If the foliation is in addition even dimensional, then $E$ is $\Z_2$-graded and $\slashed{D}$ is a symmetric, grading degree one, longitudinally elliptic operator.

Let $\cE$ be the Hilbert module associated to $E$ and let \[D_{G_x}:C_c^\infty(G_x,r^*E)\to C_c^\infty(G_x,r^*E)\]
 be the lift of $D_{L_x}$ to the holonomy cover $G_x\to L_x$. 
The family  $\{D_{G_x}\,|\,x\in M\}$ assembles to a differential operator
\[D_G:\,
C_c^\infty(G,r^*E)\to C_c^\infty(G,r^*E).\]
Its closure, which we also denote by $D_G$, is an odd, unbounded, selfadjoint operator on the Hilbert module $\mathcal{E}$ constructed in Section \ref{sec:modules}. It is regular in the sense of \cite{BaaJul}. The proof of regularity relies on the existence of a parametrix and can be found in \cite[Proposition 3.4.9]{Vassout}.
\begin{mdef}[({\cite[Section 8.2]{Kordyukov}})]
The $KK$-theory class of  $D$ is the element 
\[[D]\in KK(C(M),\FA)\] given in the unbounded picture of $KK$-theory by the 
triple $(\cE,\phi,D_G)$ where $\phi:C(M)\to\B(\cE)$ is the inclusion of Lemma \ref{lem:subofmultiplier}. The index of $D$ is the element
\[\ind(D)\in K(\FA)=K_C^0(M/\mathcal{F})\]
obtained from $[D]$ by crushing $M$ in the first variable to a point.
\end{mdef}

There are several advantages of having the $KK$-theory class $[D]$ at hand. One of them is the usual index pairing with vector bundles:
Given a longitudinally elliptic operator $D$ and a smooth vector bundle $F\to M$, we can construct the twisted operator
\[D_F=D\otimes F:C_c^\infty(M,E\otimes F)\to C_c^\infty(M,E\otimes F),\]
which is again a longitudinally elliptic operator.
A direct consequence of \cite[Theorem 13]{Kucerovsky} is the following generalization of the index pairing formula.
\begin{lem}\label{TwistedIndexFormulaviaKKclass}
$\ind(D_F)=[D]\circ[F]\in K_C^0(M/\mathcal{F})$.
\end{lem}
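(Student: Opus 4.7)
The plan is to apply the cited theorem of Kucerovsky directly to an unbounded Kasparov product. Represent $[F]\in KK(\C,C(M))$ by the degenerate cycle $(\Gamma(F),1_{\C},0)$, where $\Gamma(F)$ denotes the finitely generated projective right $C(M)$-module of continuous sections of $F$, and $[D]\in KK(C(M),\FA)$ by $(\cE,\phi,D_G)$ as in the definition preceding the lemma. The Kasparov product $[F]\otimes_{C(M)}[D]$ then lives on the internal tensor product $\Gamma(F)\otimes_{C(M)}\cE$.

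The first step is to identify this tensor product with the Hilbert-$\FA$-module $\cE_F$ associated to the bundle $E\otimes F$ (in the sense of Section \ref{sec:modules}), via the fiberwise assignment $s\otimes\xi\mapsto r^*s\cdot\xi$. This extends by continuity to a unitary isomorphism because both sides are completions of $C_c^\infty(G,r^*(E\otimes F))$, using that the left $C(M)$-action on $\cE$ is by range pullback and pointwise multiplication (Lemma \ref{lem:subofmultiplier}).

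Next, I would show that the canonical lift $D_{F,G}$ of $D_F$ to $\cE_F$ satisfies the hypotheses of Kucerovsky's theorem and therefore represents the Kasparov product $[F]\otimes_{C(M)}[D]$. The semiboundedness hypothesis is trivial because the operator in the cycle for $[F]$ is zero. The connection condition reduces to checking that for each smooth $s\in\Gamma(F)$ the commutator $[D_{F,G},T_s]$ extends to a bounded operator, where $T_s:\cE\to\cE_F$ is the creation operator $\xi\mapsto r^*s\cdot\xi$; this follows because $D_{F,G}$ differs from $D_G\otimes 1$ by a zeroth-order term whose local form is governed by the symbol of $D$ applied to $\nabla^F s$ for any chosen connection $\nabla^F$ on $F$. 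Consequently $(\cE_F,1_{\C},D_{F,G})$ represents $[F]\otimes_{C(M)}[D]$ in $KK(\C,\FA)$.

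Finally, the same cycle also represents $\ind(D_F)$: restricting the representation $\phi_F:C(M)\to\B(\cE_F)$ appearing in the definition of $[D_F]$ along the unit inclusion $\C\hookrightarrow C(M)$ yields exactly the scalar action above, and ``crushing $M$ to a point'' in the definition of the index is precisely this restriction. Hence $\ind(D_F)=[D]\circ[F]$ as claimed. The main obstacle is making Kucerovsky's connection condition precise for families of longitudinally elliptic operators on the possibly non-Hausdorff holonomy groupoid; this is however a local computation that reduces to the standard Leibniz-type identity for twisted first-order differential operators.
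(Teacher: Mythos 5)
Your proposal is correct and follows essentially the same route as the paper, which offers no written argument beyond declaring the lemma ``a direct consequence of [Kucerovsky, Theorem 13]''; your identification of $\Gamma(F)\otimes_{C(M)}\cE$ with the Hilbert module of $E\otimes F$ and your verification of the connection and semiboundedness hypotheses are exactly what that citation leaves implicit. (One terminological slip: the cycle $(\Gamma(F),1_{\C},0)$ is not \emph{degenerate} in the technical $KK$-theoretic sense --- a degenerate cycle would represent $0$ rather than $[F]$ --- you mean the cycle with vanishing operator.)
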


Another appearence of the $KK$-theory class is the following. If $T\mathcal{F}$ is even dimensional and endowed with a $\operatorname{spin}^c$ structure, then the map $p:M\to M/\mathcal{F}$ of Example \ref{ex:projtoleafspace} is $K$-oriented and induces a wrong way map $p!:K^*(M)\to K^*_C(M/\mathcal{F})$ given by composition product with an element $p!\in KK(C(M),\FA)$ \cite{ConSka}. 
In this particular case, $p!$ is in fact the $KK$-theory class of the $\operatorname{spin}^c$-Dirac operator $\slashed{D}$.

For our purposes, we have to pass from $KK$- to $E$-theory. Under the canonical isomorphism of Proposition \ref{prop:KKEiso}, $[D]$ corresponds to the following $E$-theory classes:
\begin{mdef}\label{def:Etheorylongfundclass}
The $E$-theory class 
\[\llbracket D\rrbracket\in E(C(M),\K(\cE))\cong E(C(M),\FA)\]
 of  a longitudinally elliptic Dirac type operator $D$ over $\fol$ is represented by the asymptotic morphism 
\[\rho:\,\mathcal{S}\otimes C(M)\to\mathfrak{A}(\K(\cE)),\quad f\otimes g\mapsto\overline{t\mapsto g\cdot f(t^{-1}D_G)}.\]
\end{mdef}
\begin{lem}\label{lem:indexnonasymptoticformula}
The index 
\[\ind(D)\in E(\C,\K(\cE))\cong E(\C,\FA)\cong K_C^0(M/\mathcal{F})\]
is represented by the $*$-homomorphism 
\begin{equation}\label{eq:indexnonasymptoticformula}
\mathcal{S}\to\K(\cE),\quad f\mapsto  f(D_G).
\end{equation}
\end{lem}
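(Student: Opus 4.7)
The plan is to unwind the definition of $\ind(D)$ in $E$-theory and exhibit an explicit $1$-homotopy connecting the resulting asymptotic morphism to the $*$-homomorphism $f\mapsto f(D_G)$.

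First I would use the composition recipe recalled just before Theorem~\ref{thm:Morita}: the index $\ind(D)=\llbracket D\rrbracket\circ\iota\in E(\C,\K(\cE))$, obtained from $\llbracket D\rrbracket\in E(C(M),\K(\cE))$ by precomposing with the unital inclusion $\iota:\C\to C(M)$, $z\mapsto z\cdot 1_M$, is represented (up to the irrelevant $\widehat\K$-stabilisation) by the composition
\[\mathcal{S}\hatotimes\C\xrightarrow{1_{\mathcal S}\hatotimes\iota}\mathcal{S}\hatotimes C(M)\xrightarrow{\rho}\mathfrak{A}(\K(\cE)),\]
where $\rho$ is the asymptotic morphism of Definition~\ref{def:Etheorylongfundclass}. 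Evaluated on $f\hatotimes 1$ this yields $\psi:\mathcal{S}\to\mathfrak{A}(\K(\cE))$, $f\mapsto\overline{t\mapsto f(t^{-1}D_G)}$. Thus the task reduces to showing that $\psi$ and the constant asymptotic morphism $f\mapsto\overline{t\mapsto f(D_G)}$ induced by \eqref{eq:indexnonasymptoticformula} represent the same class in $\llbracket\mathcal{S},\K(\cE)\rrbracket_\infty$.

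The natural candidate for the $1$-homotopy is
\[\Phi:\mathcal{S}\to\mathfrak{A}(\K(\cE)[0,1]),\qquad\Phi(f):=\overline{t\mapsto\bigl[s\mapsto f(\lambda_{s,t}D_G)\bigr]},\qquad\lambda_{s,t}:=1-s+s/t.\]
At $s=0$ the scaling factor is $1$, recovering the constant family for $f\mapsto f(D_G)$; at $s=1$ the factor is $1/t$, recovering $\psi$. Since $\lambda_{s,t}\in(0,1]$ for every $(s,t)\in[0,1]\times[1,\infty)$, the operator $\lambda_{s,t}D_G$ is again self-adjoint regular with compact resolvent, so $f\mapsto f(\lambda_{s,t}D_G)$ is a grading-preserving $*$-homomorphism $\mathcal{S}\to\K(\cE)$ by functional calculus and $\Phi$ is automatically a $*$-homomorphism.

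The main technical hurdle will be verifying that for each $f\in\mathcal{S}$ the map $t\mapsto\bigl[s\mapsto f(\lambda_{s,t}D_G)\bigr]$ is bounded and norm-continuous from $[1,\infty)$ into $C([0,1],\K(\cE))$. Boundedness by $\|f\|_\infty$ is clear. For continuity I would combine the spectral estimate $\|f(\lambda D_G)-f(\lambda'D_G)\|\le\sup_{\mu\in\sigma(D_G)}|f(\lambda\mu)-f(\lambda'\mu)|$ with the uniform continuity and vanishing at infinity of $f\in C_0(\R)$: for $|\mu|$ large both $f(\lambda\mu)$ and $f(\lambda'\mu)$ are small, while for $|\mu|$ bounded the uniform continuity of $f$ yields control proportional to $|\lambda-\lambda'|$. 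This gives a modulus of continuity for $\lambda\mapsto f(\lambda D_G)$ that is uniform on any sub-interval $[a,1]\subset(0,1]$, and since on every compact subset of $[1,\infty)$ the scalars $\lambda_{s,t}$ remain bounded below by a positive constant, joint continuity of $(s,t)\mapsto f(\lambda_{s,t}D_G)$ on that subset follows. Hence $\Phi(f)\in C_b([1,\infty),C([0,1],\K(\cE)))$, so $\Phi$ descends to a well-defined $*$-homomorphism into $\mathfrak{A}(\K(\cE)[0,1])$; evaluating at $s=0,1$ then exhibits $\psi$ and $\overline{t\mapsto f(D_G)}$ as $1$-homotopic, which completes the proof.
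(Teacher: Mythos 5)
Your proposal is correct and follows essentially the same route as the paper: the paper also takes the representative $f\mapsto\overline{t\mapsto f(t^{-1}D_G)}$ as the definition of $\ind(D)$ and connects it to $f\mapsto f(D_G)$ by the $1$-homotopy $f\mapsto\overline{t\mapsto[r\mapsto f((r+(1-r)t^{-1})D_G)]}$, which is your $\Phi$ up to the reparametrisation $s=1-r$. Your continuity verification via the spectral estimate and the uniform continuity of $f$ is just a spelled-out version of the paper's appeal to the continuity of $\lambda\mapsto f(\lambda\cdot\,\_\,)$ composed with the functional calculus.
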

\begin{proof}
By definition, $\ind(D)$ is represented by  the asymptotic morphism 
\begin{equation}\label{eq:indexasymptoticformula}
\mathcal{S}\to\mathfrak{A}(\K(\cE)),\quad f\mapsto \overline{t\mapsto f(t^{-1}D_G)}\,.
\end{equation}
A natural candidate for a $1$-homotopy between \eqref{eq:indexasymptoticformula} and \eqref{eq:indexnonasymptoticformula} is 
\begin{equation}\label{eq:indexonehomotopy}
\mathcal{S}\to\mathfrak{A}(\K(\cE)[0,1]),\quad f\mapsto \overline{t\mapsto[r\mapsto f((r+(1-r)t^{-1})D_G)]}\,.
\end{equation}
We have to show that, for each $f\in \mathcal{S}$, the function
\[[1,\infty)\times[0,1]\to \K(\cE),\quad (t,r)\mapsto f((r+(1-r)t^{-1})D_G)\]
is continuous. This follows from continuity of 
\[(0,1]\to\mathcal{S},\quad \lambda\mapsto f(\lambda\cdot\,\_\,)\]
and the continuity of the functional calculus
\[\mathcal{S}\to\K(\cE),\quad f\mapsto f(D_G).\]
Furthermore, \eqref{eq:indexonehomotopy} is a $*$-homomorphism, because the functional calculus is. 
Thus, it fulfills all requirements on a $1$-homotopy.
\end{proof}

\section{Twisted operators and the module structure}\label{sec:indexcalculations}
In this section, we clarify the relation of our module structure to index theory. To this end, we make the following definition:
\begin{mdef}
We denote by $\llbracket p^*\rrbracket\in E(\cfrak(\FC),C(M))$ the $E$-theory class of the asymptotic morphism
\[p^*:\cfrak(\FC)\to \mathfrak{A}(C(M)\otimes\K),\quad\bar g\mapsto\overline{t\mapsto g_t}.\]
\end{mdef}
This notation comes from the fact that the composition product with $\llbracket p^*\rrbracket$ yields the homomorphism
\[p^*:K^*_{FJ}(M/\mathcal{F})\to K^*_{FJ}(M)\cong K^*(M)\]
induced by the smooth map of leaf spaces  $p:M\to M/\mathcal{F}$ defined in Example \ref{ex:projtoleafspace}.
To see this, recall that the isomorphism $K^*_{FJ}(M)\cong K^*(M)$ comes from the  inclusion $C(M)\otimes\K\subset\cfrak(\mathcal{O}M)$ as constant functions. An inverse to this isomorphism is induced by the asymptotic morphism
\[\cfrak(\mathcal{O}M)\to \mathfrak{A}(C(M)\otimes\K),\quad\bar g\mapsto\overline{t\mapsto g_t},\]
because the composition
\[C(M)\otimes\K\to \cfrak(\mathcal{O}M)\to\mathfrak{A}(C(M)\otimes\K)\]
is simply the inclusion as constant functions and therefore the identity morphism in the asymptotic category.
The claim now follows from the fact that the homomorphism $p^*:K^*_{FJ}(M/\mathcal{F})\to K^*_{FJ}(M)$ comes from  the inclusion
$\cfrak(\FC)\subset\cfrak(\mathcal{O}M)$.

Our main result relating the module structure to index theory is the following:
\begin{thm}\label{thm:superformula}
Let $D$ be a longitudinally elliptic operator over $\fol$ and $\llbracket p^*\rrbracket\in E(\cfrak(\FC),C(M))$ the element defined above. Then
\[\llbracket D\rrbracket\circ\llbracket p^*\rrbracket=\llbracket\mathfrak{m}_\K\rrbracket\circ(1_{\cfrak(\FC)}\otimes\ind(D))\in E(\cfrak(\FC),\K(\cE)).\]
\end{thm}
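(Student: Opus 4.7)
The strategy is to represent both sides by explicit asymptotic morphisms and to interpolate between them by a $2$-homotopy modelled directly on the $1$-homotopy that already appears in the proof of Lemma~\ref{lem:indexnonasymptoticformula}.

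First I would compute the left-hand side. Combining the asymptotic-morphism representatives $p^*$ and $\rho$ (Definition~\ref{def:Etheorylongfundclass}) via the composition formula for two asymptotic morphisms recalled in Section~\ref{sec:Etheory}, the $E$-theory element $\llbracket D\rrbracket\circ\llbracket p^*\rrbracket$ is represented by the $2$-level asymptotic morphism
\[
\phi_L(f\hatotimes\bar g\hatotimes T)=\overline{t\mapsto\overline{s\mapsto g_t\cdot\bigl(f(s^{-1}D_G)\hatotimes 1_{\K}\bigr)\hatotimes T}},
\]
the variable $t$ originating from $p^*$, the variable $s$ from $\rho$, and the auxiliary copy of $\K$ being absorbed into $\widehat\K$ via stability (Theorem~\ref{thm:EStability}).

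For the right-hand side I would use the non-asymptotic representative $f\mapsto f(D_G)$ of $\ind(D)$ provided by Lemma~\ref{lem:indexnonasymptoticformula}. With this choice the exterior product $1_{\cfrak(\FC)}\otimes\ind(D)$ is represented by the plain $*$-homomorphism $f\hatotimes\bar g\hatotimes T\mapsto\bar g\hatotimes f(D_G)\hatotimes T$, and its $E$-theory composition with the asymptotic morphism $\mathfrak{m}_{\K}$ (via the shortcut for composing an $E$-theory class with an asymptotic morphism) is the $1$-level representative
\[
\phi_R(f\hatotimes\bar g\hatotimes T)=\overline{t\mapsto g_t\cdot\bigl(f(D_G)\hatotimes 1_{\K}\bigr)\hatotimes T}.
\]
Its inclusion into $\mathfrak{A}^2$ as a constant function in an added inner variable $s$ represents the same class in $\llbracket\_\,,\_\,\rrbracket_{\infty}$, because the two natural transformations $\mathfrak{A}\to\mathfrak{A}^2$ induce the same map on $n$-homotopy classes (the lemma cited in Section~\ref{sec:Etheory}).

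The bridge between $\phi_L$ and this $2$-level inclusion of $\phi_R$ is the obvious generalisation of the homotopy from the proof of Lemma~\ref{lem:indexnonasymptoticformula}:
\[
\Psi(f\hatotimes\bar g\hatotimes T)=\overline{t\mapsto\overline{s\mapsto\bigl[r\mapsto g_t\cdot\bigl(f((r+(1-r)s^{-1})D_G)\hatotimes 1_{\K}\bigr)\hatotimes T\bigr]}}.
\]
At $r=0$ one recovers $\phi_L$; at $r=1$ one lands in the constant-in-$s$ $\mathfrak{A}^2$-inclusion of $\phi_R$. The main obstacle is to verify that $\Psi$ truly defines a $*$-homomorphism into $\mathfrak{A}^2\!\bigl((\K(\cE)\hatotimes\K\hatotimes\widehat\K)[0,1]\bigr)$. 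Following the ``combined construction'' used in the proof of Theorem~\ref{thm:associativityproof}, I would split $\Psi$ into three commuting factor maps $\tilde\alpha,\tilde\beta,\tilde\gamma$ on $\mathcal{S},\cfrak(\FC),\widehat\K$. The factor on $\widehat\K$ is trivially central, so everything reduces to showing
\[
\limsup_{s\to\infty}\sup_{r\in[0,1]}\bigl\|\bigl[g_t,\,f((r+(1-r)s^{-1})D_G)\hatotimes 1\bigr]\bigr\|\xrightarrow{t\to\infty}0.
\]
Pointwise in $(r,s)$ this is exactly Lemma~\ref{asymptoticcommutation} applied to the compact operator $f((r+(1-r)s^{-1})D_G)\in\K(\cE)$; uniformity over the compact parameter space $[0,1]\times[1,\infty]$ (where the value at $s=\infty$ is $f(rD_G)$) follows from a standard $\varepsilon/4$-net argument applied to the norm-compact family of compact operators so obtained.
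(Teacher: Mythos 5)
Your setup coincides with the paper's: the same representatives for the two sides and the same interpolating $2$-homotopy built from the reparametrisation $f\mapsto f((r+(1-r)s^{-1})D_G)$. The gap is in the last step, where one must show
\[
\limsup_{t\to\infty}\ \sup_{\lambda\in(0,1]}\ \bigl\|[\,g_t,\ f(\lambda D_G)\hatotimes 1\,]\bigr\|=0 .
\]
Pointwise in $\lambda$ this is indeed Lemma \ref{asymptoticcommutation}, but your uniformity argument fails: the family $\{f(\lambda D_G)\,:\,\lambda\in(0,1]\}$ is \emph{not} norm-precompact when $f(0)\neq 0$ and $D_G$ is unbounded. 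As $\lambda\to 0$ the operators $f(\lambda D_G)$ converge only strictly, not in norm, to the non-compact operator $f(0)\cdot 1$; if the family were totally bounded, some sequence $f(\lambda_n D_G)$ with $\lambda_n\to 0$ would converge in norm to a compact operator, which would have to agree with the strong limit $f(0)\cdot 1$ --- a contradiction. Since the generators $(x\pm i)^{-1}$ of $\mathcal{S}$ do not vanish at $0$, this cannot be avoided. Equivalently, the map $(r,s)\mapsto f((r+(1-r)s^{-1})D_G)$ is not norm-continuous at the corner $r=0$, $s=\infty$ of your compactified parameter space, so the $\varepsilon/4$-net argument does not go through.

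The paper closes this gap with two ingredients absent from your proposal. First, it restricts to the generators $f(x)=(x\pm i)^{-1}$ and uses the resolvent identity
\[
[\,f(\lambda D_G),g_t\,]=-\lambda\, f(\lambda D_G)\,[\,D_G,g_t\,]\,f(\lambda D_G),
\]
which, since $\|f(\lambda D_G)\|\le 1$, yields the \emph{uniform} bound $\sup_{\lambda\in(0,1]}\|[f(\lambda D_G),g_t]\|\le\|[D_G,g_t]\|$. Second, for $[D_G,g_t]$ to be defined and to tend to zero one needs a representative $g$ of the given class in $\cfrak(\FC)$ that is differentiable in the leafwise directions with leafwise derivatives $X.g_t$ vanishing as $t\to\infty$; this is not automatic from vanishing variation and is exactly the content of the smoothing Lemma \ref{lem:smoothapprox}, proved by mollifying along the plaques of a foliation atlas. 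Writing $D=\sum_iA_iX_i$ then gives $[D_G\hatotimes 1,g_t]=\sum_i(A_i\hatotimes 1)(1\hatotimes X_i.g_t)\to0$. Without the resolvent trick and Lemma \ref{lem:smoothapprox}, the uniformity in $\lambda$ --- which is the whole analytic content of the theorem --- is not established.
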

Before proving this theorem, here are two consequences:
\begin{cor}\label{cor:twistingbyasymptoticallyleafwisebundles}
If $D$ is a longitudinally elliptic  operator, $F\to M$ a smooth vector bundle for which there is an element $x_F\in K^0_{FJ}(M/\mathcal{F})$ with $[F]=p^*(x_F)$ (e.\,g.\ $F$ asymptotically a bundle over the leaf space as in Definition \ref{asymptoticallyleafwisebundles}), 
then the index of the twisted operator $D_F$ is
\[\ind(D_F)=x_F\cdot \ind(D)\in K^0_C(M/\mathcal{F}).\]
\end{cor}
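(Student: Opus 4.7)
The plan is to chain together the two computational inputs we have: Lemma \ref{TwistedIndexFormulaviaKKclass} (which expresses $\ind(D_F)$ as a composition product involving $[F]$), and Theorem \ref{thm:superformula} (which is exactly engineered to turn a composition through $\llbracket p^*\rrbracket$ into module multiplication by $\ind(D)$). So the hypothesis $[F]=p^*(x_F)$ is the bridge that lets us feed the first into the second.

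First I would pass from $KK$-theory to $E$-theory via Proposition \ref{prop:KKEiso} and rewrite Lemma \ref{TwistedIndexFormulaviaKKclass} as
\[\ind(D_F)=\llbracket D\rrbracket\circ [F]\in E(\C,\K(\cE))\cong K_C^0(M/\cF),\]
where $[F]\in E(\C,C(M))\cong K^0(M)$. Next I would unpack the hypothesis: by the discussion of $\llbracket p^*\rrbracket$ preceding Theorem \ref{thm:superformula}, the homomorphism $p^*\colon K^0_{FJ}(M/\cF)\to K^0(M)$ is given by composition with $\llbracket p^*\rrbracket\in E(\cfrak(\FC),C(M))$, so $[F]=p^*(x_F)=\llbracket p^*\rrbracket\circ x_F$ where $x_F\in E(\C,\cfrak(\FC))$. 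Substituting gives
\[\ind(D_F)=\llbracket D\rrbracket\circ\llbracket p^*\rrbracket\circ x_F.\]

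Now I would apply Theorem \ref{thm:superformula} directly to the leftmost two factors, obtaining
\[\ind(D_F)=\llbracket\mathfrak{m}_\K\rrbracket\circ\bigl(1_{\cfrak(\FC)}\otimes\ind(D)\bigr)\circ x_F\in E(\C,\K(\cE)).\]
The last step is to recognize the right-hand side as $x_F\cdot\ind(D)$ in the sense of Corollary \ref{cor:modulemultiplication}. By the general identity
$(1_{\cfrak(\FC)}\otimes\ind(D))\circ x_F=x_F\otimes\ind(D)$
for the exterior product in $E$-theory (combining $x_F\in E(\C,\cfrak(\FC))$ with $\ind(D)\in E(\C,\K(\cE))$, viewed with the identifications $\C\otimes\C\cong\C$, $\cfrak(\FC)\otimes\C\cong\cfrak(\FC)$), this is exactly the composition
\[K_0(\cfrak(\FC))\otimes K_0(\K(\cE))\xrightarrow{\otimes}K_0(\cfrak(\FC)\hatotimes\K(\cE))\xrightarrow{\llbracket\mathfrak{m}_\K\rrbracket\circ}K_0(\K(\cE))\]
which defines the module multiplication in Corollary \ref{cor:modulemultiplication}, applied to $x_F$ and $\ind(D)$. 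Under the Morita identification $K_*(\K(\cE))\cong K_*(\FA)=K^0_C(M/\cF)$ (which is a module isomorphism by the lemma following Corollary \ref{cor:modulemultiplication}), this yields the claimed formula.

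The only potentially nontrivial point is the identification of $(1_{\cfrak(\FC)}\otimes\ind(D))\circ x_F$ with the exterior product $x_F\otimes\ind(D)$; this is a formal consequence of the monoidal structure on $\mathbf{E}$ (the version stated between Theorems \ref{thm:EStability} and \ref{thm:Morita}), applied to the case where one of the factors is an identity. Everything else is a direct substitution, so I would not expect a genuine obstacle — the heavy lifting has already been done by Theorem \ref{thm:superformula}.
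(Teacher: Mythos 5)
Your proposal is correct and follows exactly the paper's argument: the paper's proof is the one-line chain $\ind(D_F)=[D]\circ[F]=[D]\circ\llbracket p^*\rrbracket\circ x_F=\llbracket\mathfrak{m}_{\K}\rrbracket\circ(x_F\otimes\ind(D))=x_F\cdot\ind(D)$, and your write-up just makes each link (Lemma \ref{TwistedIndexFormulaviaKKclass}, the identity $[F]=\llbracket p^*\rrbracket\circ x_F$, Theorem \ref{thm:superformula}, and the monoidal identity $(1\otimes\ind(D))\circ x_F=x_F\otimes\ind(D)$) explicit. No gaps.
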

\begin{proof}
$\ind(D_F)=[D]\circ[F]=[D]\circ\llbracket p^*\rrbracket\circ x_F=\llbracket\mathfrak{m}_{\K}\rrbracket \circ(x_F\otimes\ind(D))=x_F\cdot\ind(D).$
\end{proof}
\begin{cor}[(cf.\ {\cite[p.\ 204]{RoeFoliations}})]\label{cor:shriekmap}
Assume that $T\mathcal{F}$ is even dimensional and $\operatorname{spin}^c$ and let $\slashed{D}$ be the corresponding Dirac operator.
Then the map
\[p_!\circ p^*:K^*_{FJ}(M/\mathcal{F})\to K^*_C(M/\mathcal{F})\]
is module multiplication with $\ind(\slashed{D})\in K^0_C(M/\mathcal{F})$.
\end{cor}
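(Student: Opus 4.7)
The plan is to reduce the corollary to the master formula of Theorem \ref{thm:superformula} applied to $D=\slashed{D}$, in direct analogy with the proof of Corollary \ref{cor:twistingbyasymptoticallyleafwisebundles}.

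First I would recall that by definition of the wrong-way functoriality in \cite{ConSka}, the map $p_!:K^*(M)\to K^*_C(M/\mathcal{F})$ is composition product in $KK$-theory (equivalently, in $E$-theory, via Proposition \ref{prop:KKEiso}) with the $KK$-theory class $p!=[\slashed{D}]\in KK(C(M),\FA)$ of the longitudinal $\operatorname{spin}^c$-Dirac operator. Passing to $E$-theory, this becomes composition with $\llbracket\slashed{D}\rrbracket\in E(C(M),\K(\cE))\cong E(C(M),\FA)$ of Definition \ref{def:Etheorylongfundclass}. Hence for any $x\in K^*_{FJ}(M/\mathcal{F})=E(\C,\cfrak(\FC))$ (up to suspension) one has
\[p_!\bigl(p^*(x)\bigr)=\llbracket\slashed{D}\rrbracket\circ\llbracket p^*\rrbracket\circ x\in E(\C,\K(\cE))\cong K^*_C(M/\mathcal{F}),\]
using the identification of $p^*$ with composition with $\llbracket p^*\rrbracket$ established at the beginning of Section \ref{sec:indexcalculations}.

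Next I would invoke Theorem \ref{thm:superformula} with $D=\slashed{D}$, giving
\[\llbracket\slashed{D}\rrbracket\circ\llbracket p^*\rrbracket=\llbracket\mathfrak{m}_\K\rrbracket\circ\bigl(1_{\cfrak(\FC)}\otimes\ind(\slashed{D})\bigr)\in E(\cfrak(\FC),\K(\cE)).\]
Composing both sides with $x$ on the right and using the associativity of the $E$-theory composition and exterior products, the right-hand side becomes
\[\llbracket\mathfrak{m}_\K\rrbracket\circ\bigl(x\otimes\ind(\slashed{D})\bigr),\]
which by Definition \ref{def:coronaproduct} and Corollary \ref{cor:modulemultiplication} is precisely module multiplication $x\cdot\ind(\slashed{D})$ in the action of $K_{FJ}^*(M/\mathcal{F})$ on $K^*_C(M/\mathcal{F})$.

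There is essentially no technical obstacle here beyond this bookkeeping, provided one accepts the translation between the $KK$-theoretic definition of $p_!$ and its $E$-theoretic counterpart; the only delicate point is checking that this translation is compatible with taking $p_!=[\slashed{D}]$ as in \cite{ConSka}, which is guaranteed by Proposition \ref{prop:KKEiso} together with the explicit formula for $\llbracket\slashed{D}\rrbracket$ in Definition \ref{def:Etheorylongfundclass}. All other ingredients -- naturality of the module structure with respect to $E$-theory composition, and the unit property $1_{\cfrak(\FC)}\otimes\ind(\slashed{D})$ evaluated on $x$ gives $x\otimes\ind(\slashed{D})$ -- are formal consequences of the monoidal structure on $\mathbf{E}$ recorded in Section \ref{sec:Etheory}.
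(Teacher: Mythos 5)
Your proposal is correct and follows exactly the paper's own argument: identify $p_!$ with composition by $\llbracket\slashed{D}\rrbracket$ and $p^*$ with composition by $\llbracket p^*\rrbracket$, apply Theorem \ref{thm:superformula}, and read off the result as module multiplication. The paper records precisely this chain of equalities, so your write-up is just a more detailed version of the same one-line proof.
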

\begin{proof}
$p_!\circ p^*(x)=\llbracket \slashed{D}\rrbracket\circ\llbracket p^*\rrbracket\circ x=\llbracket\mathfrak{m}_{\K}\rrbracket \circ(x\otimes\ind(\slashed{D}))=x\cdot\ind(\slashed{D}).$
\end{proof}

For the proof of Theorem \ref{thm:superformula} it would be beneficial if one had defined the stable Higson corona in a more analytic way. Recall that Higson originally defined his corona $\eta X$ of a complete Riemannian manifold $X$ as the maximal ideal space of the  $C^*$-algebra generated by the bounded  smooth functions $X\to\C$ whose gradient vanishes at infinity (cf.\ \cite[Section 5.1]{RoeCCITCRM}).
It is unknown to the author under which conditions this definition is equivalent to Roe's definition (ibid.). In other words: given a complete Riemannian manifold and a bounded continuous  function of vanishing variation, can this function be approximated by  smooth functions whose gradients vanish at infinity? For the present situation, it is sufficient to have the following partial result for the stable Higson corona of foliated cones:

\begin{lem}\label{lem:smoothapprox}
Every element of $\cfrak(\FC)$ has a representative $g\in \cbar(\FC)$ such that $g_t\in C(M)\otimes\K$ is differentiable in the leafwise direction for all $t$ and  the leafwise derivatives $X.g_t\in C(M)\otimes \K$ vanish in the limit $t\to\infty$ for every leafwise vector field $X\in C(M,T\mathcal{F})$.
\end{lem}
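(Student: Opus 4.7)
The approach is to start with any representative $f \in \cbar(\FC)$ of the given class and produce a new representative $g$ by leafwise mollification. The key observation is that the foliated cone metric only blows up in the transverse direction, so leafwise distances in $\FC$ coincide with leafwise distances in $M$; hence averaging $f$ over a small leafwise neighborhood amounts to averaging over points within a fixed bounded coarse distance $R$ in $\FC$, independent of $t$.

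Concretely, I would cover $M$ by finitely many foliation charts $U_i \cong V_i \times W_i \subset \R^{\dim \cF} \times \R^{\codim\cF}$, fix a smooth subordinate partition of unity $\{\chi_i\}$ on $M$, and fix a smooth compactly supported mollifier $\phi_\epsilon$ on $\R^{\dim \cF}$ of small radius $\epsilon > 0$. For each $t$, define the chart-wise smoothing $(T_i f_t)(v, w) = \int \phi_\epsilon(v - v') f_t(v', w)\, dv'$, taking $\epsilon$ small enough that the integrand is supported in $V_i$ for all $(v,w) \in \operatorname{supp}\chi_i$, and then patch by $g_t(x) = \sum_i \chi_i(x)(T_i f_t)(x)$. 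By construction $g_t$ is leafwise smooth and $(t, x) \mapsto g_t(x)$ is continuous, so $g \in C_b(\FC, \K)$.

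Since $g_t - f_t = \sum_i \chi_i (T_i f_t - f_t)$ and each $\|(T_i f_t - f_t)(x)\|$ is bounded pointwise by $\operatorname{Var}_R f(t, x)$ for some fixed $R>0$ depending on $\epsilon$ and the chart geometry, vanishing variation of $f$ forces $g - f \in C_0(\FC, \K)$. Hence $g \in \cbar(\FC)$ represents the same class as $f$ in $\cfrak(\FC)$. For the derivative estimate, given a leafwise vector field $X \in C(M, T\cF)$, use $\sum_i X\chi_i = X(1) = 0$ to rewrite
\[
X.g_t \;=\; \sum_i (X\chi_i)\cdot(T_i f_t - f_t) \;+\; \sum_i \chi_i \cdot X.(T_i f_t).
\]
The first sum is controlled by $\max_i \|X\chi_i\|_\infty \cdot \operatorname{Var}_R f(t, \cdot)$ as above. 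For each term in the second sum, differentiating under the integral sign and using $\int (X\phi_\epsilon)(v-v')\, dv' = 0$ to subtract a constant yields
\[
X.(T_i f_t)(v, w) \;=\; \int (X\phi_\epsilon)(v - v')\,[f_t(v', w) - f_t(v, w)]\, dv',
\]
which is bounded by $\|X\phi_\epsilon\|_{L^1} \cdot \operatorname{Var}_R f(t, \cdot)$. Taking the supremum over $M$, both contributions vanish as $t \to \infty$, giving $\|X.g_t\|_{C(M)\otimes\K} \to 0$.

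The main obstacle is simply the bookkeeping when combining local mollifications via the partition of unity; the crucial point that makes everything work is that every variation estimate stays at the single coarse scale $R$ for all $t$, because the leafwise geometry of the slice $\{t\} \times M \subset \FC$ is independent of $t$.
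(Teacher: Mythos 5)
Your proposal is correct and follows essentially the same route as the paper's proof: leafwise mollification in foliation charts patched by a partition of unity, with the pointwise error and both derivative terms controlled by $\operatorname{Var}_R f(t,\cdot)$ at a single coarse scale $R$ uniform in $t$, using $\sum_i X.\chi_i = 0$ and $\int X.\phi_\epsilon = 0$ exactly as in the paper.
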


\begin{proof}
Let $\{\phi_i\}_{i=1,\dots,k}$ be an atlas of foliation charts $\phi_i:U_i\xrightarrow{\approx}\R^{\dim\mathcal{F}}\times\R^{\codim\mathcal{F}}$ and $\{\chi_i\}_{i=1,\dots,k}$ a subordinate smooth partition of unity. Choose a smooth function
$\delta:\R^{\dim\mathcal{F}}\to[0,\infty)$ supported in the compact unit ball $\overline{B}_1(0)$ such that $\int\delta=1$.

Given  any $h\in \cbar(\FC)$, we define the functions 
\[g_i,h_i:[0,\infty)\times \R^{\dim\mathcal{F}}\times\R^{\codim\mathcal{F}}\to \K\]
for $i=1,\dots,k$ by the formulas
\begin{align*}
h_i(t,x,z)&:=h(t,\phi_i^{-1}(x,z)),
\\g_i(t,x,z)&:=\int_{\R^{\dim\mathcal{F}}}\delta(x-y)h_i(t,y,z)dy
\end{align*}
and $g:\FC\to\K$ by 
\[g(t,p):=\sum_{i=1}^k\chi_i(p)g_i(t,\phi_i(p)).\]
This function $g$ is clearly continuous and we claim that it is a representative of $\overline{h}\in \cfrak(\FC)$ with the desired properties.

To this end, let $K_i:=\supp(\chi_i)$ and  note that there is $R>0$ with the following property: 
Whenever $i=1,\dots,k$ and  $(x,z),(y,z)\in \phi_i(K_i)+\overline{B}_1(0)$, then the points
$\phi_i^{-1}(x,z),\phi_i^{-1}(y,z)\in M$ are joined by a leafwise path of length  at most $R$. In particular, for all $t\geq 0$ the distance between the two points
\[(t,\phi_i^{-1}(x,z)),\,(t,\phi_i^{-1}(y,z))\in \FC\]
is at most $R$ and therefore
\[\|h_i(t,x,z)-h_i(t,y,z)\|\leq \operatorname{Var}_Rh(t,\phi_i^{-1}(x,z)).\]
This implies 
\begin{align*}
\|g_i(t,x,z)-h_i(t,x,z)\|&=\left\|\int_{\overline{B}_1(x)}\delta(x-y)(h_i(t,y,z)-h_i(t,x,z))dy\right\|
\\&\leq \int_{\overline{B}_1(x)}\delta(x-y)\|h_i(t,y,z)-h_i(t,x,z)\|dy
\\&\leq\operatorname{Var}_Rh(t,\phi_i^{-1}(x,z))
\end{align*}
for all $t\geq 0$ and $(x,z)\in \phi_i(K_i)$ and therefore
$\|g(t,p)-h(t,p)\|\leq \operatorname{Var}_Rh(t,p)$ for all $(t,p)\in \FC$.
As $\operatorname{Var}_Rh$ vanishes at infinity, $g$ must also have vanishing variation and $\overline{g}=\overline{h}$ in $\cfrak(\FC)$.

It remains to estimate the longitudinal derivatives: Let $X\in C(M,T\mathcal{F})$ be a leafwise vector field. 
Given $p\in M$, we denote $(x_i,z_i):=\phi_i(p)$ and 
\[X_i(x_i,z_i):=d\phi_i(X(p))\in \R^{\dim\mathcal{F}}\subset \R^{\dim\mathcal{F}}\times\R^{\codim\mathcal{F}}\]
for those $i$ with $p\in U_i$. The derivative of $g_t$ along the leafwise vector field $X$ is
\[X.g_t(p)=\sum_{i=1}^kX.\chi_i(p)g_i(t,x_i,z_i)+\sum_{i=1}^k\chi_i(p)\int_{\overline{B}_1(x_i)}X_i.\delta(x_i-y)h_i(t,y,z_i)dy.\]

As for the first summand, note that 
\[\sum_{i=1}^kX.\chi_i(p)h_i(t,x_i,z_i)=\sum_{i=1}^kX.\chi_i(p)h(t,p)=(X.1)h(t,p)=0\]
and therefore
\begin{align*}
\left\|\sum_{i=1}^kX.\chi_i(p)g_i(t,x_i,z_i)\right\|&=\left\|\sum_{i=1}^kX.\chi_i(p)(g_i(t,x_i,z_i)-h_i(t,x_i,z_i))\right\|
\\&\leq\sum_{i=1}^k|X.\chi_i(p)|\operatorname{Var}_Rh(t,p)
\end{align*}
which converges to $0$ uniformly in $p\in M$ for $t\to\infty$.

For the second summand, we use that $\int\delta\equiv 1$ and therefore
\[\int_{\overline{B}_1(x_i)}X_i.\delta(x_i-y)dy=0\]
to estimate
\begin{align*}
\left\|\int_{\overline{B}_1(x_i)}X_i.\delta(x_i-y)h_i(t,y,z_i)dy\right\|=
&\left\|\int_{\overline{B}_1(x_i)}X_i.\delta(x_i-y)(h_i(t,y,z_i)-h_i(t,x_i,z_i))dy\right\|
\\&\leq\int_{\overline{B}_1(x_i)}|X_i.\delta(x_i-y)|\operatorname{Var}_Rh(t,p)dy
\end{align*}
which also converges to $0$ uniformly in $p\in M$ for $t\to\infty$, because $X_i.\delta$ is bounded on the compact set $K_i+\overline{B}_1(0)$.
\end{proof}

\begin{proof}[of Theorem \ref{thm:superformula}]
The left hand side is represented by
\begin{align*}
\mathcal{S}\hatotimes  \cfrak(\FC)&\to\mathfrak{A}^2(\K(\cE)\hatotimes \K)
\\f\hatotimes  g&\mapsto \overline{t\mapsto\overline{s\mapsto g_t\cdot (f(s^{-1}D_G)\hatotimes  1_{\tilde\K})}}\,.
\end{align*}
The right hand side, on the other hand, is represented by 
\begin{align*}
\mathcal{S}\hatotimes  \cfrak(\FC)&\to\mathfrak{A}(\K(\cE)\hatotimes \K)
\\f\hatotimes  g&\mapsto \overline{t\mapsto g_t\cdot (f(D_G)\hatotimes  1_{\tilde\K})}\,.
\end{align*}
To construct a $2$-homotopy between them, let
\begin{align*}
\alpha:\,\mathcal{S}&\to\mathfrak{A}^2((\B(\cE)\hatotimes \tilde\K)[0,1])
\\f&\mapsto\overline{t\mapsto \overline{s\mapsto[r\mapsto f((r+(1-r)s^{-1})D_G)\hatotimes 1_{\tilde \K}]}}
\end{align*}
be the $2$-homotopy obtained from the $1$-homotopy in the proof of  Lemma \ref{lem:indexnonasymptoticformula}. 
Furthermore, by interpreting $g_t\in C(M)\otimes\K$ as an element of $\B(\cE)\hatotimes \K$ for all $t$, we obtain an inclusion
\[\beta:\,\cfrak(\FC)\to\mathfrak{A}^2((\B(\cE)\hatotimes \tilde\K)[0,1]),\quad\bar g\mapsto \overline{t\mapsto \overline{s\mapsto[r\mapsto g_t]}}.\]
We have to show that the commutators $[\alpha(f),\beta(\bar g)]$ vanish for all $f\in \mathcal{S}$ and $\bar g\in \cfrak(\FC)$.
We may assume $f(x)=(x\pm i)^{-1}$, as these functions generate $\mathcal{S}$, and that $g$ is as in Lemma \ref{lem:smoothapprox}.
Then
\begin{align*}
\|[\alpha(f),\beta(\bar g)]\|&=\limsup_{t\to\infty}\limsup_{s\to\infty}\sup_{r\in[0,1]}\|[f((r+(1-r)s^{-1})D_G)\hatotimes 1_{\tilde \K},g_t]\|
\\&=\limsup_{t\to\infty}\sup_{\lambda\in(0,1]}\|[f(\lambda D_G)\hatotimes 1_{\tilde \K},g_t]\|
\\&=\limsup_{t\to\infty}\sup_{\lambda\in(0,1]}\| \lambda\cdot((\lambda D_G\pm i)^{-1}\hatotimes  1_{\tilde\K})\cdot[D_G\hatotimes 1_{\tilde\K},g_t]\cdot
((\lambda D_G\pm i)^{-1}\hatotimes  1_{\tilde\K})\|
\\&\leq \limsup_{t\to\infty}\|[D_G\hatotimes 1_{\tilde\K},g_t]\|.
\end{align*}
If we write $D=\sum_iA_iX_i$ with bundle endomorphisms $A_i\in C(M, \operatorname{End}(E))$ and leafwise vector fields $X_i\in  C(M,T\mathcal{F})$, then
\[[D_G\hatotimes 1_{\tilde\K},g_t]=\sum_i(A_i\hatotimes  1_{\tilde\K})(1_{\operatorname{End}(E)}\hatotimes  X_i.g_t)\in C(M,\operatorname{End}(E))\hatotimes \K\]
and this vanishes for $t\to\infty$ by the choice of $g$.

Thus, $\alpha$ and $\beta$ combine to a $2$-homotopy
\begin{align*}
\mathcal{S}\hatotimes \cfrak(\FC)&\to\mathfrak{A}^2(C[0,1]\hatotimes \B(\cE)\hatotimes \tilde\K)
\\f\hatotimes  g&\mapsto \overline{t\mapsto \overline{s\mapsto[r\mapsto g_t\cdot (f((r+(1-r)s^{-1})D_G)\hatotimes  1_{\tilde\K})]}}
\end{align*}
whose image lies in the sub-$C^*$-algebra $\mathfrak{A}^2(C[0,1]\hatotimes \K(\cE)\hatotimes \K)$.
Evaluating at $0,1$ yields representatives of left and right hand side of the equation.
\end{proof}


\begin{thebibliography}{GHT00}

\bibitem[AK69]{AarnesKadison}
Johan~F. Aarnes and Richard~V. Kadison.
\newblock Pure states and approximate identities.
\newblock {\em Proc. Amer. Math. Soc.}, 21:749--752, 1969.

\bibitem[BJ83]{BaaJul}
Saad Baaj and Pierre Julg.
\newblock Th\'eorie bivariante de {K}asparov et op\'erateurs non born\'es dans
  les {$C^{\ast} $}-modules hilbertiens.
\newblock {\em C. R. Acad. Sci. Paris S\'er. I Math.}, 296(21):875--878, 1983.

\bibitem[Bla98]{BlaKK}
Bruce Blackadar.
\newblock {\em {$K$}-theory for operator algebras}, volume~5 of {\em
  Mathematical Sciences Research Institute Publications}.
\newblock Cambridge University Press, Cambridge, second edition, 1998.

\bibitem[CH]{ConHig}
Alain Connes and Nigel Higson.
\newblock Almost homomorphisms and {$K$}-theory.
\newblock Unpublished preprint.

\bibitem[Con82]{ConSurvey}
A.~Connes.
\newblock A survey of foliations and operator algebras.
\newblock In {\em Operator algebras and applications, {P}art {I} ({K}ingston,
  {O}nt., 1980)}, volume~38 of {\em Proc. Sympos. Pure Math.}, pages 521--628.
  Amer. Math. Soc., Providence, R.I., 1982.

\bibitem[Con94]{ConNCG}
Alain Connes.
\newblock {\em Noncommutative geometry}.
\newblock Academic Press Inc., San Diego, CA, 1994.

\bibitem[CS84]{ConSka}
A.~Connes and G.~Skandalis.
\newblock The longitudinal index theorem for foliations.
\newblock {\em Publ. Res. Inst. Math. Sci.}, 20(6):1139--1183, 1984.

\bibitem[EM06]{EmeMey}
Heath Emerson and Ralf Meyer.
\newblock Dualizing the coarse assembly map.
\newblock {\em J. Inst. Math. Jussieu}, 5(2):161--186, 2006.

\bibitem[FJ90]{FarrellJones}
F.~Thomas Farrell and L.~Edwin Jones.
\newblock {\em Classical aspherical manifolds}, volume~75 of {\em CBMS Regional
  Conference Series in Mathematics}.
\newblock Published for the Conference Board of the Mathematical Sciences,
  Washington, DC; by the American Mathematical Society, Providence, RI, 1990.

\bibitem[GHT00]{GueHigTro}
Erik Guentner, Nigel Higson, and Jody Trout.
\newblock Equivariant {$E$}-theory for {$C^*$}-algebras.
\newblock {\em Mem. Amer. Math. Soc.}, 148(703):viii+86, 2000.

\bibitem[HG04]{HigGue}
Nigel Higson and Erik Guentner.
\newblock Group {$C^\ast$}-algebras and {$K$}-theory.
\newblock In {\em Noncommutative geometry}, volume 1831 of {\em Lecture Notes
  in Math.}, pages 137--251. Springer, Berlin, 2004.

\bibitem[HS87]{HilSka}
Michel Hilsum and Georges Skandalis.
\newblock Morphismes {$K$}-orient\'es d'espaces de feuilles et fonctorialit\'e
  en th\'eorie de {K}asparov (d'apr\`es une conjecture d'{A}. {C}onnes).
\newblock {\em Ann. Sci. \'Ecole Norm. Sup. (4)}, 20(3):325--390, 1987.

\bibitem[Kas80a]{KasparovHilbertModules}
G.~G. Kasparov.
\newblock Hilbert {$C^{\ast} $}-modules: theorems of {S}tinespring and
  {V}oiculescu.
\newblock {\em J. Operator Theory}, 4(1):133--150, 1980.

\bibitem[Kas80b]{KasparovOperatorKFunctor}
G.~G. Kasparov.
\newblock The operator {$K$}-functor and extensions of {$C^{\ast} $}-algebras.
\newblock {\em Izv. Akad. Nauk SSSR Ser. Mat.}, 44(3):571--636, 719, 1980.

\bibitem[Kor09]{Kordyukov}
Yu.~A. Kordyukov.
\newblock Index theory and noncommutative geometry on manifolds with
  foliations.
\newblock {\em Uspekhi Mat. Nauk}, 64(2(386)):73--202, 2009.

\bibitem[Kuc97]{Kucerovsky}
Dan Kucerovsky.
\newblock The {$KK$}-product of unbounded modules.
\newblock {\em $K$-Theory}, 11(1):17--34, 1997.

\bibitem[Lan95]{Lance}
E.~C. Lance.
\newblock {\em Hilbert {$C^*$}-modules}, volume 210 of {\em London Mathematical
  Society Lecture Note Series}.
\newblock Cambridge University Press, Cambridge, 1995.
\newblock A toolkit for operator algebraists.

\bibitem[MP84]{MingoPhillips}
J.~A. Mingo and W.~J. Phillips.
\newblock Equivariant triviality theorems for {H}ilbert {$C^{\ast} $}-modules.
\newblock {\em Proc. Amer. Math. Soc.}, 91(2):225--230, 1984.

\bibitem[Roe93]{RoeCCITCRM}
John Roe.
\newblock Coarse cohomology and index theory on complete {R}iemannian
  manifolds.
\newblock {\em Mem. Amer. Math. Soc.}, 104(497):x+90, 1993.

\bibitem[Roe95]{RoeFoliations}
John Roe.
\newblock From foliations to coarse geometry and back.
\newblock In {\em Analysis and geometry in foliated manifolds ({S}antiago de
  {C}ompostela, 1994)}, pages 195--205. World Sci. Publ., River Edge, NJ, 1995.

\bibitem[Vas01]{Vassout}
St\'ephane Vassout.
\newblock {\em Feuilletages et r\'esidu non commutatif longitudinal}.
\newblock Th{\`e}se de doctorat, Universit\'e Pierre et Marie Curie - Paris VI,
  2001.

\bibitem[Win83]{Winkelnkemper}
H.~E. Winkelnkemper.
\newblock The graph of a foliation.
\newblock {\em Ann. Global Anal. Geom.}, 1(3):51--75, 1983.

\bibitem[Wul14]{WulffCoassemblyRinghomo}
Christopher Wulff.
\newblock Coarse co-assembly as a ring homomorphism.
\newblock To appear in {\em Journal of Noncommutative Geometry}, arXiv:1412.1691, Preprint 2014.

\end{thebibliography}

\affiliationone{
Christopher Wulff\\
Instituto de Matem\'aticas (Unidad Cuernavaca), Universidad Nacional Aut\'onoma de M\'exico,
Avenida Universidad s/n, Colonia Lomas de Chamilpa, 62210 Cuernavaca, Morelos,\\
M\'exico \email{christopher.wulff@im.unam.mx}
}

\end{document}